\pdfoutput=1

\newif\ifpersonal
\personalfalse  

\documentclass[11pt,reqno,a4paper,dvipsnames,x11names]{amsart}

\usepackage[utf8]{inputenc}
\usepackage[T1]{fontenc}
\usepackage[english]{babel}
\usepackage[a4paper,margin=1.12in]{geometry}

\usepackage{microtype}
\usepackage{mathpazo}
\usepackage{euler}

\usepackage{amsmath,amssymb,amsthm,mathtools,mathrsfs,bm,eucal,tensor}
\usepackage{stmaryrd}
\usepackage{dsfont}

\usepackage[all,cmtip]{xy}
\usepackage{tikz}
\usetikzlibrary{
 arrows.meta,
 calc,
 positioning,
 fit,
 backgrounds,
 shapes.geometric,
 decorations.markings
}
\usepackage{tikz-cd}

\tikzset{
 >=Stealth,
 box/.style={draw, rounded corners, inner sep=6pt},
 v/.style={draw, circle, inner sep=1.4pt},
 arr/.style={->, thick},
 darr/.style={->, thick, dashed},
 midarrow/.style={
 postaction={decorate},
 decoration={markings,mark=at position 0.58 with {\arrow{Stealth}}}
 }
}

\usepackage{enumitem}
\usepackage{comment,braket,xspace,csquotes}

\usepackage{xcolor}
\usepackage{hyperref}
\hypersetup{
 colorlinks=true,
 linkcolor=blue!60!black,
 citecolor=blue!60!black,
 urlcolor=blue!60!black
}
\usepackage[capitalize,nameinlink]{cleveref}

\numberwithin{equation}{section}

\theoremstyle{plain}
\newtheorem{theorem}{Theorem}[section]
\newtheorem{proposition}[theorem]{Proposition}
\newtheorem{corollary}[theorem]{Corollary}
\newtheorem{lemma}[theorem]{Lemma}
\newtheorem*{theoremA}{Theorem A}
\newtheorem*{theoremB}{Theorem B}
\newtheorem*{theoremC}{Theorem C}
\newtheorem*{theoremD}{Theorem D}

\theoremstyle{definition}
\newtheorem{definition}[theorem]{Definition}
\newtheorem{example}[theorem]{Example}
\newtheorem{construction}[theorem]{Construction}
\newtheorem{warning}[theorem]{Warning}
\newtheorem{hypothesis}[theorem]{Hypothesis}

\theoremstyle{remark}
\newtheorem{remark}[theorem]{Remark}

\newcommand{\cO}{\mathcal{O}}

\DeclareMathOperator{\Spec}{Spec}
\DeclareMathOperator{\Sym}{Sym}
\DeclareMathOperator{\Der}{Der}

\DeclareMathOperator{\RHom}{RHom}

\DeclareMathOperator{\hocofib}{hocofib}
\DeclareMathOperator{\fib}{fib}
\DeclareMathOperator{\Tot}{Tot}

\DeclareMathOperator{\CE}{CE}
\DeclareMathOperator{\DR}{DR}

\DeclareMathOperator{\Unf}{Unf}
\DeclareMathOperator{\Flat}{Flat}
\DeclareMathOperator{\Map}{Map}
\DeclareMathOperator{\Crys}{Crys}

\DeclareMathOperator{\id}{id}

\newcommand{\kk}{k}

\newcommand{\cF}{\mathcal F}
\newcommand{\cG}{\mathcal G}
\newcommand{\cR}{\mathcal R}
\newcommand{\g}{\mathfrak g}
\newcommand{\h}{\mathfrak h}
\newcommand{\e}{\mathfrak e}
\newcommand{\kfr}{\mathfrak k}
\newcommand{\z}{\mathfrak z}
\newcommand{\uuder}{\mathfrak u_{\mathrm{der}}}
\newcommand{\Uder}{\mathbb U_{\mathrm{der}}}
\newcommand{\bbT}{\mathbb T}
\newcommand{\bbL}{\mathbb L}
\newcommand{\eps}{\varepsilon}

\newcommand{\xto}[1]{\xrightarrow{#1}}

\newcommand{\gm}{\mathrm{gm}}
\newcommand{\bas}{\mathrm{bas}}

\newcommand{\tr}{\mathrm{tr}}

\newcommand{\eff}{\mathrm{eff}}

\newcommand{\from}{\colon}

\newcommand{\Tate}{\mathrm{Tate}}
\newcommand{\GM}{\mathrm{GM}}
\title[Transversal unfoldings of derived foliations]{Transversal unfoldings of derived foliations}
\author{}
\date{}

\begin{document}

\begin{abstract}
We establish a homotopy-invariant theory of transversal unfoldings of relative derived foliations. A transversal unfolding is an integrable lift of the parameter directions to infinitesimal transverse symmetries of the relative foliation. We construct a canonical transverse controller and prove that the groupoid of strict transversal unfoldings is the one-truncation of its derived space of flat splittings. For affine Chevalley--Eilenberg presentations, the controller is described by first-order Lie derivations modulo inner derivations. When the inner action has a kernel, the ordinary quotient is replaced by a crossed controller whose derived mapping space retains the transported central isotropy and the ensuing higher homotopies. We then construct the controller intrinsically from the graded-mixed de Rham algebra, prove its independence of the chosen presentation, and obtain a global classification under explicit descent hypotheses. The theory encompasses the classical, logarithmic, shifted-Poisson, and representable leaf-space cases.
\end{abstract}
\author[M. Corr\^ea]{Maur\'icio Corr\^ea}
\address[M. Corr\^ea]{Dipartimento di Matematica, Universit\`a degli Studi di Bari, Via E. Orabona 4, I-70125, Bari, Italy}
\email{mauricio.correa.mat@gmail.com, mauricio.barros@uniba.it}

\maketitle

\section{Introduction}

Derived foliations, introduced by To\"en and Vezzosi, encode possibly singular foliations by graded-mixed de Rham algebras and retain homotopical information absent from their classical truncations \cite{TV}. They belong naturally to derived algebraic geometry \cite{HAGII,PTVV,CPTVV}, and give rise to derived foliated de Rham cohomology and formal leaves \cite[Sections~2.3 and~3.2--3.3]{TV}. In the rigid holomorphic setting, singular Frobenius theory further relates them to analytic leaf spaces \cite{Malgrange76,Malgrange77,TVRH}. The purpose of the present paper is to determine the infinitesimal structure governing transverse transport in a family of relative derived foliations.

Let \(\pi\from X\to S\) be smooth and let \(\cF/S\) be a relative derived foliation. A deformation records the variation of the leafwise structure along \(S\). A \emph{transversal unfolding} includes, in addition, a lift of the tangent directions of the base to infinitesimal transverse symmetries of \(\cF/S\), subject to an integrability condition. Equivalently, it defines flat transverse transport along the base. On cotangent complexes, transversality is expressed by a fibre sequence
\[
 \pi^*\bbL_S\longrightarrow\bbL_{\widetilde\cF}
 \longrightarrow\bbL_{\cF/S}
 \longrightarrow\pi^*\bbL_S[1],
\]
where \(\widetilde\cF\) is the absolute derived foliation determined by the unfolding. Dually, the quotient of its tangent complex by the relative tangent complex is \(\pi^*\bbT_S\). This description rests upon the relative cotangent theory and its compatibility with base change \cite{IllusieI,TV}.

The distinction between a deformation and an unfolding is already present in the classical theory. If a codimension-one foliation is locally defined by an integrable form \(\omega\), an infinitesimal relative deformation has the form
$
 \omega_\eps=\omega+\eps\eta,
$
whereas an unfolding contains an additional transverse term
$
 \widetilde\omega_\eps=\omega+\eps\eta+h\,d\eps.
$
In Suwa's theory, the term \(h\,d\eps\) records a lift of the parameter direction to the total space \cite{Suwa81,Suwa83}. The integrability equation asserts that the lifted directions act by transverse infinitesimal symmetries with vanishing curvature. The derived theory developed here is governed by the same geometric principle: the local transverse term is replaced by a flat splitting of a canonical controller. In this respect, the construction extends the classical and Lie-algebroid descriptions of transversal unfoldings in \cite{Quallbrunn,CMQ}.

\medskip
 
Let \(B\to A\) be a smooth morphism between smooth commutative \(\kk\)-algebras, and let
\[
 \rho\from\g\longrightarrow T_{A/B}
\]
be a cofibrant strictly perfect relative dg-Lie algebroid. Its Chevalley--Eilenberg algebra
\[
 \CE^*(\g)=\Sym_A(\g^\vee[1])
\]
presents a relative derived foliation, by the Lie--Rinehart and dg-Lie-algebroid models of the graded-mixed de Rham algebra \cite{Rinehart63,Nuiten,TV}. A \emph{basic first-order derivation} of \(\g\) is an infinitesimal Lie symmetry whose transverse symbol is induced by a vector field on the base; this is the transverse analogue of the Atiyah--Lie--Rinehart algebra of first-order operators \cite{Atiyah57,Rinehart63,CMQ}. These derivations form a dg-Lie algebra \(D^1_{\bas}(\g/B)\), and tangent symmetries define the inner-action morphism
\[
 \iota\from\g\longrightarrow D^1_{\bas}(\g/B),
 \qquad
 x\longmapsto\bigl(\rho(x),\operatorname{ad}_x,0\bigr).
\]
The presentation is called \emph{effective} when \(\iota\) is injective. In this case, set
\[
 \uuder(\g/B)
 :=D^1_{\bas}(\g/B)\big/\iota(\g).
\]
This is a dg-Lie algebroid over \(B\), with anchor
$
 a_{\uuder}\from\uuder(\g/B)\longrightarrow T_{B/\kk}.$
 
A \emph{flat splitting} is a \(B\)-linear section of this anchor which is a morphism of dg-Lie algebroids. It is therefore an integrable choice of lifts of vector fields on the base.

The first main result is the affine classification in the absence of isotropy.

\begin{theoremA} 
Let \(B\to A\) be smooth, and let \(\g\to T_{A/B}\) be a cofibrant strictly perfect effective relative dg-Lie algebroid. The effective unfolding groupoid has no non-trivial stabilisers, and the adjoint-symbol construction induces a bijection
\[
 \left|\Unf^{\tr,\eff}(\g/B)\right|
 \simeq
 \Flat\bigl(T_{B/\kk},\uuder(\g/B)\bigr).
\]
Moreover, the basic part of an unfolding reconstructs the full absolute dg-Lie algebroid after extension of scalars from \(B\) to \(A\); no separate basic-generation or faithful-flatness hypothesis is required.
\end{theoremA}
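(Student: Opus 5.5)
We plan to work entirely at the level of the affine Chevalley--Eilenberg model and to make the adjoint-symbol construction into an explicit inverse pair of maps.

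An effective strict transversal unfolding is an absolute cofibrant dg-Lie algebroid \(\widetilde\g\to T_{A/\kk}\) together with an identification of the relative part with \(\g\), so that there is an exact sequence
\[
 0\longrightarrow\g\longrightarrow\widetilde\g\longrightarrow A\otimes_B T_{B/\kk}\longrightarrow 0,
\]
compatible with anchors and brackets. This is the algebraic form of the cotangent fibre sequence of the introduction. Morphisms in \(\Unf^{\tr,\eff}(\g/B)\) are isomorphisms of \(\widetilde\g\) restricting to the identity on \(\g\) and inducing the identity on the quotient.

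\textbf{Step 1: adjoint symbol.} Let \(\widetilde\g_{\bas}\subset\widetilde\g\) be the \(B\)-submodule of sections whose image in \(A\otimes_B T_{B/\kk}\) lies in \(1\otimes T_{B/\kk}\). To each \(\tilde v\in\widetilde\g_{\bas}\) we attach the triple
\[
 \bigl(\rho(\tilde v),\ \operatorname{ad}_{\tilde v}|_{\g},\ \bar v\bigr).
\]
The Jacobi identity and the Leibniz rule for the anchor show that this triple is a basic first-order derivation of \(\g\). The result is a morphism of dg-Lie algebras \(\widetilde\g_{\bas}\to D^1_{\bas}(\g/B)\) that extends \(\iota\) on \(\g\). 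Passing to the quotient, we obtain a \(B\)-linear map \(T_{B/\kk}\to\uuder(\g/B)\), and this map is independent of the choice of basic lift. Because brackets of basic elements map to brackets of derivations, this map is a Lie morphism, hence a flat splitting. Isomorphic unfoldings give the same splitting: an isomorphism changes a basic lift \(\tilde v\) by an element of \(\g\), and such an element is killed by the quotient by \(\iota(\g)\). The map on \(\pi_0\) is therefore well defined.

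\textbf{Step 2: inverse.} Let \(\sigma\) be a flat splitting. Form the pullback
\[
 \widetilde\g_{\bas}^\sigma \;=\; D^1_{\bas}(\g/B)\times_{\uuder(\g/B)}T_{B/\kk}.
\]
By effectivity, this is an extension of \(T_{B/\kk}\) by \(\g\) as a dg-Lie \(B\)-algebroid. Cofibrancy and strict perfectness let us choose a \(B\)-linear section and verify that the bracket is strict. Set
\[
 \widetilde\g=\g\oplus \bigl(A\otimes_B \widetilde\g_{\bas}^\sigma\bigr)\big/\sim,
\]
where \(\sim\) identifies the two copies of \(\g\). Brackets are extended by the Leibniz rule, using the first-order symbols to define \([\tilde v,a x]\). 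This is the claimed reconstruction by extension of scalars. No basic-generation or faithful-flatness hypothesis is needed, because \(A\otimes_B T_{B/\kk}\) is generated by \(T_{B/\kk}\) tautologically.

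It then remains to check two things. First, applying Step 1 to this \(\widetilde\g\) returns \(\sigma\). Second, every unfolding \(\widetilde\g\) is isomorphic to the one built from its own symbol: the basic part of \(\widetilde\g\) maps isomorphically onto the pullback, by the five lemma together with injectivity of \(\iota\), and this isomorphism extends \(A\)-linearly.

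\textbf{Step 3: no stabilisers.} An automorphism \(\phi\) of an unfolding is the identity on \(\g\) and on the quotient, so \(\phi-\id=\delta\colon A\otimes_B T_{B/\kk}\to\g\). Since \(\phi\) preserves brackets with \(\g\), \(\operatorname{ad}_{\delta(v)}\) vanishes on \(\g\); since \(\phi\) preserves anchors, \(\rho(\delta(v))=0\). Hence \(\iota(\delta(v))=0\), and effectivity gives \(\delta=0\).

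\textbf{Main obstacle.} The hardest step is the Jacobi identity and the \(A\)-linear extension in Step 2. There we must show that the Leibniz-extended bracket on \(\widetilde\g\) is well defined and satisfies Jacobi, which means controlling the mixed terms \([a\tilde v,b\tilde w]\). The plan is to verify Jacobi on \(B\)-basic generators, where it reduces to flatness of \(\sigma\) and the derivation property in \(D^1_{\bas}\). We then propagate it using the anchor compatibility \(a_{\uuder}\circ\sigma=\id\), and this propagation is exactly where the dg-signs and cofibrancy must be handled.
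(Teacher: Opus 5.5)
Your outline follows the paper's argument in its essentials: the adjoint symbol of basic lifts, flatness from brackets of lifts, the pullback $D^1_{\bas}(\g/B)\times_{\uuder(\g/B)}T_{B/\kk}$ followed by scalar extension modulo $a\otimes x-1\otimes ax$, and effectivity to kill stabilisers. Your Step~3 is a little more direct than Lemma~\ref{lem:effective-discrete}. You run the effectivity argument on every $\tilde v\in\widetilde\g$, so you do not need the reconstruction lemma there.

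The argument breaks exactly where the theorem says no faithful-flatness hypothesis is needed. You take the basic part to be the $B$-submodule of $\widetilde\g$ of sections with image in $1\otimes T_{B/\kk}$, and you send $\tilde v$ to $(\widetilde\rho(\tilde v),\operatorname{ad}_{\tilde v}|_\g,\bar v)$. But $T_{B/\kk}\to A\otimes_BT_{B/\kk}$ need not be injective for smooth $B\to A$. For $B=B_1\times B_2$ and $A=B_1$ (an open and closed immersion, hence smooth), its kernel is $T_{B_2/\kk}$, which is nonzero once $\dim B_2>0$. Then $\bar v\in T_{B/\kk}$ is not a function of $\tilde v$, so your morphism $\widetilde\g_{\bas}\to D^1_{\bas}(\g/B)$ is not defined. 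The five-lemma comparison in Step~2 also fails: the pullback is an extension of all of $T_{B/\kk}$ by $\g$, while your submodule is an extension of only the image of $T_{B/\kk}$ in $A\otimes_BT_{B/\kk}$. The remedy is the paper's: define the basic part as the fibre product $\widetilde\g\times_{A\otimes_BT_{B/\kk}}T_{B/\kk}$ of pairs $(\tilde\xi,\xi)$. This sits in an exact sequence $0\to\g\to\widetilde\g^{\bas}\to T_{B/\kk}\to0$ of $B$-dg-Lie algebroids, and with that definition your Steps~1 and~2 go through.

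Second, the ``Moreover'' clause is asserted rather than proved. To conclude that every unfolding is isomorphic to the one rebuilt from its splitting, you need the canonical map $(A\otimes_B\widetilde\g^{\bas})/\cR\to\widetilde\g$, $a\otimes(\tilde\xi,\xi)\mapsto a\tilde\xi$, to be an isomorphism of $A$-dg-Lie algebroids. Saying that the isomorphism of basic parts ``extends $A$-linearly'' only identifies the scalar extension of $\widetilde\g^{\bas}$ with that of the pullback. Tautological generation of $A\otimes_BT_{B/\kk}$ by $T_{B/\kk}$ gives surjectivity only. Injectivity is the content of Lemma~\ref{lem:automatic-basic-reconstruction}. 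An element of the kernel maps to $0$ in $A\otimes_BT_{B/\kk}$. By exactness of $A\otimes_B-$ on $0\to\g\to\widetilde\g^{\bas}\to T_{B/\kk}\to0$ (the paper uses flatness of $B\to A$; right exactness suffices), it comes from $A\otimes_B\g$. There it lies in the kernel of multiplication, that is, in $\cR$. You must also check that your Leibniz-extended bracket restricts to the bracket of $\widetilde\g$ on elements $a\tilde\xi$.

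Finally, appealing to cofibrancy and strict perfectness to choose a section and make the pullback bracket strict is unnecessary. A pullback of dg-Lie algebras is strict by construction, and the reconstruction uses no section.
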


This statement is proved in Theorem~\ref{thm:strict-affine}, together with Lemmas~\ref{lem:automatic-basic-reconstruction} and~\ref{lem:reconstruction-basic-extension}. Given an unfolding, a basic lift acts on \(\g\) by the adjoint representation and therefore determines a basic first-order derivation. Changing the lift changes this derivation by an inner one, while the Jacobi identity gives the flatness of the resulting splitting. Conversely, a flat splitting determines a basic extension over \(B\), and the Lie--Rinehart formula reconstructs the absolute algebroid over \(A\). In this form, Theorem~A is the Chevalley--Eilenberg counterpart of the classical correspondence between unfoldings and flat transverse infinitesimal symmetries \cite{Suwa81,Suwa83,Quallbrunn,CMQ}.

\medskip
 
Effectivity is not intrinsic. If \(\iota\) has a kernel, then
$
 \ker(\iota)
 =\{x\in\g:\rho(x)=0,\ \operatorname{ad}_x=0\}
$
is the central isotropy. Passage to the ordinary quotient would discard these elements, although their transport along the base contributes to the automorphism complex of an unfolding. The appropriate controller is therefore the crossed module
\[
 \Uder(\g/B)
 :=\bigl[\g\xto{\iota}D^1_{\bas}(\g/B)\bigr].
\]
Crossed modules give the customary strict presentation of the corresponding two-term homotopical quotient \cite{BrownSpencer76,Noohi07}. We define its derived space of flat splittings by
\[
 \operatorname{Flat}^{\mathrm{der}}
 \bigl(T_{B/\kk},\Uder(\g/B)\bigr)
 :=
 \Map_{(\mathrm{LieAlgd}^{\infty}_{B})_{/T_{B/\kk}}}
 \bigl(T_{B/\kk},\Uder(\g/B)\bigr),
\]
where the mapping space is computed in the localised \(\infty\)-category of dg-Lie algebroids, or equivalently of \(L_\infty\)-algebroids, over \(T_{B/\kk}\) \cite{Hinich01,HAGII,Pridham10,Nuiten}. As usual, if \(\mathcal G\) is a groupoid, we denote its simplicial nerve by \(N\mathcal G\). Thus \(N\operatorname{Unf}^{\tr}_{\mathrm{str}}(\g/B)\) is a one-type, whereas the derived mapping space also retains the higher homotopies arising from central isotropy.

\begin{theoremB} 
For a cofibrant strictly perfect CE presentation, the adjoint-symbol construction identifies the strict unfolding groupoid with the strict flat-lift groupoid of the crossed controller. Its nerve is the one-truncation of the derived splitting space:
\[
 N\operatorname{Unf}^{\tr}_{\mathrm{str}}(\g/B)
 \simeq
 \tau_{\leq1}\operatorname{Flat}^{\mathrm{der}}
 \bigl(T_{B/\kk},\Uder(\g/B)\bigr).
\]
The full derived space is the mapping space before truncation. The automorphism group of a strict splitting is the Chevalley--Eilenberg cocycle group with coefficients in its transported central isotropy. When \(\iota\) is injective, Theorem~A is recovered upon passing to connected components.
\end{theoremB}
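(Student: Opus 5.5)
The plan is to prove Theorem~B in three stages: build a strict groupoid model, compare it with the derived mapping space through a cofibrant replacement of the source, and identify stabilisers. \emph{Stage one} defines the strict flat-lift groupoid $\mathcal{L}(T_{B/\kk},\Uder)$ of the crossed module $\g\xto{\iota}D^1_{\bas}(\g/B)$. Its objects are $B$-linear lifts $\sigma\from T_{B/\kk}\to D^1_{\bas}(\g/B)$ of the anchor whose curvature
\[
 [\sigma(v),\sigma(w)]-\sigma([v,w])
\]
is \emph{strictly} zero. A morphism $\sigma\to\sigma'$ is a $B$-linear element $\xi$, built from the lifting data, with $\sigma'=\sigma+\iota\circ\xi$, modulo the strict relations of the crossed module. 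The adjoint-symbol construction from the proof of Theorem~A (Theorem~\ref{thm:strict-affine}) already sends an unfolding to a basic lift acting by $\operatorname{ad}$. It also sends an isomorphism of unfoldings, which is a gauge by an element of $\g$, to a morphism of lifts. So the first task is to check that this construction is a functor $\Unf^{\tr}_{\mathrm{str}}(\g/B)\to\mathcal{L}$. Full faithfulness and essential surjectivity should follow from Lemmas~\ref{lem:automatic-basic-reconstruction} and~\ref{lem:reconstruction-basic-extension}. These reconstruct the absolute algebroid over $A$ from basic data, and that reconstruction never used effectivity; effectivity only entered when passing to the quotient $\uuder$. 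The new point is that two lifts differing by $\iota(z)$ with $z\in\ker\iota$ give the same derivation but different morphisms. I would check directly that this matches automorphisms of the unfolding given by central isotropy sections.

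\emph{Stage two} compares $N\mathcal{L}$ with the derived space. I would resolve $T_{B/\kk}$ cofibrantly over itself by the Chevalley--Eilenberg/bar-type resolution. Because $T_{B/\kk}$ is projective over the smooth algebra $B$, maps out of this resolution into a two-term dg-Lie algebroid are Maurer--Cartan elements of the convolution $L_\infty$-algebra
\[
 \mathfrak h=\operatorname{Hom}_B\bigl(\CE_\bullet(T_{B/\kk}),[\g\to D^1_{\bas}]\bigr)
\]
over the anchor. Since the crossed module is concentrated in two degrees, Getzler--Hinich theory identifies $\tau_{\le1}\operatorname{MC}_\bullet(\mathfrak h)$ with the Deligne groupoid. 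The MC elements are pairs $(\sigma,\beta)$, where $\beta\from\wedge^2T_B\to\g$ satisfies $\operatorname{curv}(\sigma)=\iota\beta$. I then need to show that the inclusion of strict objects ($\beta=0$) into this Deligne groupoid is an equivalence on $\pi_0$ and $\pi_1$; this is the main obstacle. First I would use a gauge by $\xi\from T_B\to\g$ to kill $\beta$, which works when $\beta$ is a coboundary $d_{\CE}\xi$ modulo $\ker\iota$. In general the obstruction is a class in $H^2_{\CE}(T_B;\ker\iota)$. To handle it, I would use the cofibrancy and strict perfectness hypotheses, together with the fact that the strict groupoid allows lifts only up to $\iota$, to argue that the strict groupoid is precisely what this Deligne groupoid models. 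If the obstruction does not vanish, I would instead define the strict groupoid so that it already includes $\beta$, which is the honest crossed-module flat lift.

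\emph{Stage three} computes stabilisers and recovers Theorem~A. The automorphisms of a strict splitting $\sigma$ are the $\xi\from T_B\to\g$ with $\iota\xi=0$ and with $d_\sigma\xi=0$, the Chevalley--Eilenberg differential for the transported action of $T_B$ on $\ker\iota$ induced by $\sigma$ via $\operatorname{ad}$. These form the cocycle group $Z^1_{\CE}(T_B;\ker\iota)$, or $Z^0$ in the paper's convention, as stated. When $\iota$ is injective, $\ker\iota=0$ and the crossed module is quasi-isomorphic to $\uuder$, so the mapping space is discrete. Taking $\pi_0$ then gives $\Flat(T_{B/\kk},\uuder(\g/B))$, which recovers Theorem~A.
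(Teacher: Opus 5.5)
\paragraph{Where the proposal breaks.} The gap is in Stage one, and it makes the ``main obstacle'' of Stage two false, not merely hard.

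From an unfolding, the adjoint-symbol construction gives a map $\Theta\colon\widetilde\g^{\bas}\to D^1_{\bas}(\g/B)$, $\widehat\xi\mapsto\Theta_{\widehat\xi}$. It is defined on the whole basic extension, not on $T_{B/\kk}$. To get your $\sigma$ you must choose a section of $\widetilde\g^{\bas}\to T_{B/\kk}$. A $B$-linear section exists because $T_{B/\kk}$ is projective, but its bracket defect $\beta$ takes values in $\g$ and $F_\sigma=\iota\beta$. So $\sigma$ is flat only if $\iota\beta=0$. Even then, $\sigma$ remembers only the split extension $\g\rtimes_\sigma T_{B/\kk}$, not $\widetilde\g^{\bas}$. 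An unfolding lies in the essential image of your $\mathcal L$ exactly when $\widetilde\g^{\bas}$ admits a Lie-algebroid section. Consequently your functor $\Unf^{\tr}_{\mathrm{str}}(\g/B)\to\mathcal L$ is not well defined, and the subgroupoid of objects with $\beta=0$ is not equivalent to the Deligne groupoid.

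\paragraph{Counterexample.} Let $A=B=\kk[t_1^{\pm1},t_2^{\pm1}]$ and let $\g=Bz$ be abelian with zero anchor, so $\iota=0$. Put $\e=Bz\oplus T_{B/\kk}$ with brackets $[\widetilde\xi,fz]=\xi(f)z$ and $[\widetilde\xi,\widetilde\eta]=\widetilde{[\xi,\eta]}+\beta(\xi,\eta)z$, where $\beta=(t_1t_2)^{-1}dt_1\wedge dt_2$. Set $\Theta(\widetilde\xi)=(\xi,\,fz\mapsto\xi(f)z,\,\xi)$. By Lemma~\ref{lem:reconstruction-basic-extension} this is a strict unfolding, and its $\sigma$ is the trivial flat connection. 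However, a Lie section $\partial_i\mapsto\widetilde\partial_i+a_iz$ would require $\partial_2a_1-\partial_1a_2=(t_1t_2)^{-1}$ with Laurent polynomials $a_i$, which is impossible. So this unfolding is equivalent to no object of $\mathcal L$.

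\paragraph{The obstruction is misplaced.} Gauging $(\sigma,\beta)$ means solving $\beta+d_\sigma\xi+\tfrac12[\xi,\xi]=0$ for $\xi\colon T_{B/\kk}\to\g$. The obstruction is therefore the (generally nonabelian) class of the extension $\widetilde\g^{\bas}$, with coefficients in all of $\g$. It is not a class in $H^2_{\CE}(T_{B/\kk};\ker\iota)$. In the effective case your claim would force every flat splitting of $\uuder(\g/B)$ to lift to a flat map into $D^1_{\bas}(\g/B)$, and nothing in the hypotheses ensures this. Accordingly, Step~3 of the proof of Theorem~\ref{thm:strict-affine} forms the pullback $\e_s$ rather than lifting $s$.

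\paragraph{How to repair it.} Your fallback is not optional: objects must carry $\beta$. A pair $(\sigma,\beta)$ with $F_\sigma=\iota\beta$ (and the Bianchi condition) is exactly a basic extension $\e$ with $\Theta\colon\e\to D^1_{\bas}(\g/B)$ restricting to $\iota$, together with a $B$-linear section. The paper's Definition~\ref{def:strict-lift-groupoid} works with $(\e,\Theta)$ directly, so no section is ever chosen. With these objects, your Stage one becomes the paper's proof of Theorem~\ref{thm:higher}: Lemmas~\ref{lem:automatic-basic-reconstruction} and~\ref{lem:reconstruction-basic-extension} give the inverse constructions, and, as you note, neither uses effectivity.

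The paper obtains the one-truncation in Proposition~\ref{prop:strict-presentation-crossed} by citing the Brown--Spencer/Noohi description of maps into a crossed module by such extensions. Your resolution-plus-Deligne-groupoid route would be a more explicit, cocycle-level check of that statement. Two points need care if you pursue it. First, $[\g\to D^1_{\bas}(\g/B)]$ is not concentrated in two degrees once $\g$ carries an internal differential. Second, Getzler's comparison needs a nilpotence argument, for example the arity filtration, which is finite because $\wedge^p_BT_{B/\kk}=0$ for $p>\dim B$.

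\paragraph{Stage three.} This agrees with Proposition~\ref{prop:central-stabiliser-complex}, and the group is $Z^1_{\CE}$ in the paper's convention as well. To recover Theorem~A you only need $\pi_0$, which follows from the groupoid equivalence together with Theorem~\ref{thm:strict-affine}. Discreteness of the derived mapping space is not automatic in the dg setting; compare the connectivity hypothesis in Proposition~\ref{prop:strict-flat-pi0}.
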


Thus the passage from effective to non-effective unfoldings is not obtained by discarding isotropy, but by replacing an ordinary quotient with its homotopical quotient.

\medskip
 
The preceding construction is initially expressed in a chosen CE model. Its passage from a strict presentation to an invariant graded-mixed construction follows the homotopical theory of derived foliations and dg-Lie algebroids \cite{HAGII,Nuiten,TV}. For a perfect relative derived foliation, the intrinsic controller is defined by
\[
 \Uder(\cF/S)
 :=
 \bigl[
   \bbT_{\cF/S}
   \longrightarrow
   \Der^{\gm}_{\bas}(\DR(\cF/S))
 \bigr].
\]
Here \(\Der^{\gm}_{\bas}(\DR(\cF/S))\) denotes the basic weight-zero graded-mixed derivations of the relative de Rham algebra; ``basic'' means that the transverse symbol is induced by a vector field on \(S\). The definition depends only upon the graded-mixed de Rham algebra of the foliation.

\begin{theoremC} 
Let \(\cF/S\) be a perfect relative derived foliation admitting a cofibrant strictly perfect CE presentation by \(\g\). Then
\[
 \Der^{\gm}_{\bas}(\CE^*(\g))
 \cong D^1_{\bas}(\g/B),
\]
and the intrinsic inner action corresponds to \(\iota\from\g\to D^1_{\bas}(\g/B)\). Hence the crossed-module controller of the presentation represents \(\Uder(\cF/S)\). Any two such CE presentations determine equivalent two-term \(L_\infty\)-controllers. In the effective case their homotopy cofibres are equivalent, and the ordinary quotients represent them whenever the inner maps are cofibrations. Consequently, the classification by flat splittings is independent of the chosen CE presentation.
\end{theoremC}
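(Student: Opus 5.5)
We propose to prove Theorem~C in three stages: first an algebraic identification of graded-mixed derivations of a CE algebra with first-order Lie derivations, then a homotopy-invariance statement for the two-term controller, and finally the transfer of the flat-splitting classification.

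\emph{Stage 1: the isomorphism $\Der^{\gm}_{\bas}(\CE^*(\g))\cong D^1_{\bas}(\g/B)$.}
Since $\CE^*(\g)=\Sym_A(\g^\vee[1])$ is free as a graded-commutative algebra over $A$, a weight-zero derivation is determined by two pieces of data. The first is its restriction to $A$, a derivation $v\colon A\to A$. The second is its restriction to the generators, an operator $\g^\vee[1]\to\g^\vee[1]$ obeying the Leibniz rule with respect to $v$. Dualising the second piece with the perfect pairing (strict perfectness is used here) gives a first-order operator $\nabla\colon\g\to\g$ with symbol $v$.

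We then impose commutation with the mixed differential $d_{\CE}$. Evaluated on $A$, it forces $[v,\rho(x)]=\rho(\nabla x)$. Evaluated on $\g^\vee[1]$, it forces $\nabla$ to be a derivation of the bracket. The basic condition says the image of $v$ in $T_{A/B}$-cosets is induced by a vector field on $B$. These are exactly the defining conditions of $D^1_{\bas}(\g/B)$ as a triple $(v,\nabla,\xi)$.

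A routine check shows the correspondence respects brackets, since commutators of derivations correspond to commutators of triples. For $x\in\g$, the intrinsic inner action is the Cartan operation $L_x=[\iota_x,d_{\CE}]$, that is, contraction followed by commutator with the mixed structure. It has $A$-component $\rho(x)$, generator component $\operatorname{ad}_x$ and base component $0$, which is exactly $\iota(x)$. Hence the crossed module $[\g\to D^1_{\bas}]$ represents $\Uder(\cF/S)$; here we use $\bbT_{\cF/S}\simeq\g$ for a cofibrant presentation.

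\emph{Stage 2: independence of the CE presentation.}
Any two cofibrant strictly perfect presentations $\g,\g'$ of $\cF/S$ have CE algebras that are weakly equivalent as graded-mixed algebras. By cofibrancy we may connect them by a zigzag of quasi-isomorphisms of dg-Lie algebroids, and reduce to a single quasi-isomorphism $f\colon\g\to\g'$, choosing a path-object factorisation if needed. Derivations of a cofibrant source with values in a fibrant target compute derived derivations. So $\Der^{\gm}_{\bas}$ of either CE algebra models the same derived object, the $\RHom$ of the basic part of the graded-mixed cotangent complex. Concretely, we would compare both sides with the derivation complex $\Der^{\gm}_{\bas}(\CE^*(\g'),\CE^*(\g))$ along $f^*$, and show both restriction maps are quasi-isomorphisms.

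The inner maps are compatible up to a specified homotopy. The resulting square therefore gives an $L_\infty$-equivalence of the two-term controllers, viewed as $L_\infty$-algebroids over $T_{B/\kk}$ via the homotopy-quotient model of crossed modules. We expect this step to be the main obstacle. The basic condition and the bracket must be transported through homotopy transfer, not just the underlying complexes, so we would use Nuiten's model structure and check that the basic subcomplex is preserved, being a pullback along the anchor to $T_{B/\kk}$.

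\emph{Stage 3: the effective case and the classification.}
If both inner maps are injective, the crossed module is equivalent to its homotopy cofibre. When $\iota$ is a cofibration, the strict quotient $\uuder(\g/B)$ computes this cofibre. Finally, $\operatorname{Flat}^{\mathrm{der}}$ is a mapping space in the localised $\infty$-category, so it is invariant under equivalences of controllers. By Theorem~B and Theorem~A, the classification by flat splittings is therefore independent of the presentation.
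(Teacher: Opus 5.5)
Your Stages~1 and~3 agree with the paper. Stage~1 is the calculation of Proposition~\ref{prop:intrinsic-comparison}: components on \(A\) and on the generators, dualisation by perfectness, \([\Theta,\epsilon_{CE}]=0\) on functions and on generators, and \(L_x\mapsto(\rho(x),\operatorname{ad}_x,0)\). Stage~3 is Theorem~\ref{thm:presentation-independence} together with Corollary~\ref{cor:classification-invariance}.

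The gap is in Stage~2, at the point you flag as the main obstacle, and the mechanism you propose does not close it. The comparison
\[
 \Der^{\gm}_{\bas}(\CE^*(\g))
 \xrightarrow{\;(-)\circ f^*\;}
 \Der^{\gm}_{\bas,f^*}(\CE^*(\g'),\CE^*(\g))
 \xleftarrow{\;f^*\circ(-)\;}
 \Der^{\gm}_{\bas}(\CE^*(\g'))
\]
is only a zigzag of quasi-isomorphisms of complexes. Derivations along \(f^*\) form a bimodule over the two derivation algebras and carry no bracket of their own. So there is nothing for the two arrows to preserve, and ``the resulting square therefore gives an \(L_\infty\)-equivalence'' does not follow. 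Homotopy transfer does not help either. It transports the \(L_\infty\)-structure of one side along the chain-level equivalence, but gives no reason why the transported structure agrees with the commutator bracket on the other side, which is exactly what must be shown.

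The paper notes this limitation at the start of Section~\ref{sec:homotopy-invariance} and sidesteps it. It replaces the bimodule by the formal moduli problem \(\mathcal{A}ut^{\mathrm{gm}}_{\bas}(R)\) of projectable graded-mixed automorphisms. Conjugation along an equivalence \(R\simeq R'\) is an equivalence of formal moduli problems. Passing to tangent \(L_\infty\)-algebroids then gives an equivalence that carries all brackets, the projectable symbol and the Cartan inner action at once (Proposition~\ref{prop:homotopy-invariance-der}).

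If you want to keep your more hands-on route, the missing idea is to compare through an honest dg-Lie algebra. Consider the pairs \((D',D)\) with the same base vector field and \(f^*\circ D'=D\circ f^*\). Since \(f^*\) commutes with both differentials, these form a sub-dg-Lie algebra of the product. Given your two quasi-isomorphisms to the bimodule, both projections from this pair algebra are quasi-isomorphisms as soon as one of the two restriction maps is surjective, because the strict pullback is then a homotopy pullback. That surjectivity is not automatic, since commuting with \(\epsilon_{CE}\) and with the anchor are strict conditions. It has to be arranged using cofibrancy, typically by factorising \(f\) in the semi-model structure.

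Once this is done, \(x\mapsto(L_{f(x)},L_x)\) lands in the pair algebra, because \(f^*\) strictly intertwines contractions and Lie derivatives. So the inner maps are compatible on the nose, and no ``specified homotopy'' is needed.

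Finally, your reduction from the graded-mixed equivalence \(\CE^*(\g)\simeq\CE^*(\g')\) to a zigzag of quasi-isomorphisms of dg-Lie algebroids silently uses full faithfulness of \(\CE^*\) on perfect algebroids. The paper avoids this by working directly with an equivalence of cofibrant quasi-free graded-mixed cdga's.
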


The comparison with a CE presentation is Proposition~\ref{prop:intrinsic-comparison}; homotopy invariance and presentation independence are established in Theorem~\ref{thm:presentation-independence} and Corollary~\ref{cor:classification-invariance}. It follows that the controller is not merely an auxiliary construction attached to a strict algebraic model, but an invariant of the derived foliation itself.

\medskip
 
Descent is invoked only after the local classification and the intrinsic controller have been established. Under explicit hypotheses of CE-presentability, perfectness, and descent for the controller, the local splitting spaces glue over a smooth affine hypercover by homotopical descent for derived stacks and derived foliations \cite{HAGII,TV}. The cotangent and tangent constructions are compatible with the usual derived base-change theory \cite{IllusieI}, and the descended controller admits the required derived pushforward to the base.

\begin{theoremD} 
Under Hypothesis~\ref{hyp:global-descent}, transversal derived unfoldings of \(\cF/S\) are equivalent to the derived space of flat splittings of the descended controller:
\[
 \operatorname{Unf}^{\tr}_{\mathrm{der}}(\cF/S)
 \simeq
 \operatorname{Flat}^{\mathrm{der}}_S
 \bigl(\bbT_S,\Uder(\cF/S)\bigr)
 :=
 \Map_{\mathrm{LieAlgdStack}_{/\bbT_S}}
 \bigl(\bbT_S,\mathrm R\pi_*\Uder(\cF/S)\bigr),
\]
with the crossed mapping space in the non-effective case. In the effective strict case, this equivalence induces a bijection on connected components with ordinary flat splittings of the descended effective controller.
\end{theoremD}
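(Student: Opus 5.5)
The plan is to reduce Theorem~D to the local statements, Theorems~A--C, by homotopical descent along a smooth affine hypercover. Under Hypothesis~\ref{hyp:global-descent}, choose a smooth affine hypercover \(U_\bullet\to X\) over an affine hypercover \(V_\bullet\to S\), with \(U_n=\Spec A_n\) and \(V_n=\Spec B_n\), such that on each level \(\cF/S\) is presented by a cofibrant strictly perfect relative dg-Lie algebroid \(\g_n\to T_{A_n/B_n}\). The face and degeneracy maps are realised by CE-quasi-isomorphisms after pullback. Both sides of the desired equivalence will be written as homotopy limits over \(\Delta\): the left side by descent for derived foliations and their absolute extensions \cite{TV}, and the right side by the descent hypothesis on the controller.

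On the unfolding side, a transversal derived unfolding is an absolute derived foliation \(\widetilde\cF\) together with the fibre sequence \(\pi^*\bbL_S\to\bbL_{\widetilde\cF}\to\bbL_{\cF/S}\). Both conditions are local for the smooth topology, and the cotangent complexes satisfy base change \cite{IllusieI}. Hence \(\operatorname{Unf}^{\tr}_{\mathrm{der}}(\cF/S)\simeq\operatorname{holim}_{n}\operatorname{Unf}^{\tr}_{\mathrm{der}}(\g_n/B_n)\). On each level, the derived local classification, namely Theorem~B before truncation, identifies \(\operatorname{Unf}^{\tr}_{\mathrm{der}}(\g_n/B_n)\) with \(\operatorname{Flat}^{\mathrm{der}}(T_{B_n/\kk},\Uder(\g_n/B_n))\). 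Theorem~C then says this depends only on \(\Uder(\cF/S)|_{U_n}\). Consequently, the levelwise equivalences are natural in the simplicial structure maps, and they assemble into a map of cosimplicial spaces.

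On the controller side, the hypothesis gives \(\mathrm R\pi_*\Uder(\cF/S)\simeq\operatorname{holim}_n(\pi_n)_*\Uder(\g_n/B_n)\) as Lie algebroid stacks over \(\bbT_S\). Mapping out of \(\bbT_S\) commutes with this limit, which gives \(\operatorname{Flat}^{\mathrm{der}}_S\simeq\operatorname{holim}_n\operatorname{Flat}^{\mathrm{der}}(T_{B_n/\kk},\Uder(\g_n/B_n))\). Comparing the two homotopy limits proves the main equivalence. In the non-effective case, the crossed-module mapping spaces are used throughout, so the argument is unchanged.

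For the effective strict statement, I would pass to \(\pi_0\) and use Theorem~A. When every inner map is injective, and cofibrant by Theorem~C, the local spaces are discrete. Their \(\pi_0\) is the set of ordinary flat splittings, and the homotopy limit of discrete spaces is the ordinary equaliser. This equaliser is the set of flat splittings of the descended ordinary quotient. The ordinary quotient sheafifies because injectivity is preserved under the smooth flat base changes.

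I expect the main obstacle to be naturality: the levelwise identifications of Theorem~B must be coherent, as \(\infty\)-functorial maps and not merely objectwise equivalences, across the quasi-isomorphisms relating the different CE presentations on overlaps. I would first handle this by strictifying. Choose the hypercover so that the presentations are compatible on the nose, using cofibrant replacement in a Reedy model structure on cosimplicial dg-Lie algebroids. If that fails, I would instead build the adjoint-symbol map globally, directly from the graded-mixed definition of \(\Der^{\gm}_{\bas}(\DR(\cF/S))\). That definition is manifestly functorial, and Proposition~\ref{prop:intrinsic-comparison} would then be used only to check levelwise that the resulting map is an equivalence.
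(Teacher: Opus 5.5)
Your skeleton matches the paper's proof: levelwise equivalences \(\Phi_n\) on a CE hypercover, assembled into a levelwise equivalence of cosimplicial spaces, then \(\operatorname*{holim}_{\Delta}\), with mapping spaces out of \(\bbT_S\) commuting with the limit on the controller side. However, the two steps you treat as the real work are exactly what Hypothesis~\ref{hyp:global-descent} assumes, and the paper simply invokes them. Compatibility of the \(\Phi_n\) with all cofaces and codegeneracies is (ii). The identification of \(\operatorname*{holim}_n\mathcal U^{\mathrm{der}}_n\) with the global unfolding space is (iii).

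Your substitute arguments do not hold as written. Theorem~C supplies only objectwise equivalences, so naturality does not follow from it. No Reedy strictification or global adjoint-symbol map is needed once you use (ii). More seriously, ``Theorem~B before truncation'' is not a result. Theorem~\ref{thm:higher} identifies only the strict groupoid with \(\tau_{\le1}\), and the untruncated local unfolding space is \emph{defined} to be the mapping space. This leaves you with two options, and neither works:
\begin{itemize}
\item if \(\operatorname{Unf}^{\tr}_{\mathrm{der}}\) means a moduli of absolute derived foliations with the cotangent fibre sequence, its levelwise identification with the full mapping space is unproved;
\item if it means the paper's space, To\"en--Vezzosi descent for foliations says nothing about its descent.
\end{itemize}
Either way you must use (iii).

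Also, keep \(\bbT_S\) as the source on every level, as in Definition~\ref{def:local-groupoids}, or take \(V_\bullet\to S\) \'etale. For a smooth non-\'etale \(V_n\to S\), \(T_{B_n/\kk}\) is strictly larger than \(B_n\otimes T_S\). Then \(\operatorname{Flat}^{\mathrm{der}}(T_{B_n/\kk},\Uder(\g_n/B_n))\) classifies transport along all of \(V_n\), and Proposition~\ref{prop:base-change-D} is only available for \'etale maps.

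The effective statement has a genuine gap. You assert that the local derived spaces are discrete. But Theorem~\ref{thm:strict-affine}, via Lemma~\ref{lem:effective-discrete}, only shows that the strict groupoid has no stabilisers, i.e.\ that \(\tau_{\le1}\) of each local space is discrete. The paper explicitly allows the untruncated mapping space to retain higher homotopies from the internal dg structure even in the effective case (Propositions~\ref{prop:strict-flat-pi0} and~\ref{prop:strict-presentation-crossed}).

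If some local \(\pi_s\) with \(s\ge2\) is non-zero, then \(\pi_0\) of the homotopy limit over \(\Delta\) is not the equaliser of the local \(\pi_0\)'s. The terms \(\pi^s\pi_s\) contribute, and classes in \(\pi^{s+1}\pi_s\) obstruct. So ``the homotopy limit of discrete spaces is the ordinary equaliser'' cannot be applied. Note also that cofibrancy of the inner maps is a hypothesis of Theorem~C, not a conclusion.

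To run your argument you need an additional input ensuring the local mapping spaces are discrete. This would be a connectivity condition like the one in Proposition~\ref{prop:strict-flat-pi0}, imposed on all higher homotopy groups. The paper does not argue via discreteness. It truncates the global equivalence and reads off connected components from the effective case of Theorem~\ref{thm:higher}. That step likewise presupposes that the higher local homotopy does not interfere with the limit.
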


This is Theorem~\ref{thm:intrinsic}. Proposition~\ref{prop:checkable-global-descent} records a concrete sufficient criterion for its hypotheses, while Theorem~\ref{thm:descent-controller} proves \'etale descent in the strictly CE-presented scheme-theoretic setting. Thus the global theorem is the globalisation of the controller classification, rather than its definition or point of departure.

\medskip
 
The controller also organises the obstruction and deformation theory of unfoldings. Its anchor sequence carries an Atiyah--Kodaira--Spencer class, in the tradition of Atiyah extensions and cotangent-complex obstruction theory \cite{Atiyah57,IllusieI}; the vanishing of this class is equivalent to the existence of a transverse splitting, and the curvature of a chosen splitting is the obstruction to flatness. Around a flat splitting, infinitesimal deformations are governed by the total Chevalley--Eilenberg dg-Lie algebra of base forms with values in the vertical dg kernel, as in the standard dg-Lie formulation of derived deformation problems \cite{Hinich01,Pridham10,Nuiten}. In a split model, the absolute mixed differential is obtained from the relative mixed differential by adding the base de Rham differential and a transverse Lie-derivative term, and the compatibility with both the internal and mixed differentials is the corresponding total Maurer--Cartan equation.

When a relative leaf space is represented, the abstract splitting becomes a flat Ehresmann connection on the leaf-space fibration, consistently with the representability results for derived foliations \cite{TVRH,TV}. A coherently Cartan-linearised crystal then carries derived Gauss--Manin transport, extending the classical differentiation of relative de Rham cohomology \cite{KatzOda68}; in the weight-zero situation considered here, the induced connection preserves the relevant filtration \cite{Deligne71,Deligne74} and makes the foliated Chern character horizontal, in accordance with the theory of characteristic classes for derived foliations \cite{TVGRR}. These statements are consequences of the classification, rather than additional components of the definition of an unfolding.

\medskip
 
The construction recovers ordinary smooth foliations and Lie algebroids \cite{Rinehart63,Nuiten}, logarithmic tangent algebroids \cite{Kato89}, Suwa's transverse term \cite{Suwa81,Suwa83}, derived pullbacks and representable or formal leaf spaces \cite{TVRH,TV}, and strict Hamiltonian presentations of shifted Poisson structures \cite{PTVV,CPTVV}. In each case the same principle remains visible: transverse transport is governed by a controller of infinitesimal symmetries, and integrability is the flatness of a splitting. When isotropy is present, the crossed controller records the central isotropy, its transport along the base and the higher homotopies which an ordinary quotient would lose.

\medskip
 
Sections~2--5 develop the strict affine theory and prove Theorem~A. Sections~6--9 construct the derived mapping space, identify the strict one-truncation, treat transported central isotropy, construct the projectable intrinsic controller and establish Theorems~B and~C. Sections~10--11 develop Gauss--Manin transport and descent, culminating in Theorem~D. Sections~12--17 study the deformation complex, obstruction classes, flatness, normal forms, representable leaf spaces and characteristic classes. The final section collects the principal geometric examples.

\medskip\paragraph{\bf Acknowledgments.} The author acknowledges the partial support of the Universit\`a degli Studi di Bari. He is grateful to Ariel Molinuevo, Federico Quallbrunn and Simone Noja for many stimulating and illuminating conversations. He is a member of GNSAGA, a national research group of INdAM.

\section{Derived foliations and CE presentations}

This section records the conventions concerning derived foliations, graded-mixed algebras, and CE presentations. The definitions are those of To\"en--Vezzosi \cite{TV}; the use of cotangent complexes is Illusie's convention \cite{IllusieI}; and the Lie--Rinehart language for algebroids follows the classical algebraic model of Rinehart and its dg form as developed by Nuiten \cite{Rinehart63,Nuiten}. The ground field \(\kk\) has characteristic zero. Strict CE computations are made in cofibrant strictly perfect models.

\subsection{Graded mixed de Rham algebras}

Throughout the paper, a derivation of a graded mixed cdga means a weight-zero derivation preserving the grading and commuting with the mixed structure, unless another weight is explicitly indicated. Accordingly, transverse symmetries of a derived foliation preserve the weight grading, whereas derivations of higher weight belong to the internal deformation complex of the graded-mixed algebra.

Following the conventions for graded mixed complexes used in derived foliation theory \cite[Section~1.2]{TV}, a graded mixed complex is a graded complex \(E=\bigoplus_{p\in\mathbb Z}E^{(p)}\), where \(p\) is the weight, endowed with an internal differential \(d\) of bidegree \((1,0)\) and a mixed differential
\[
 \epsilon\from E^{(p)}\longrightarrow E^{(p+1)}[-1]
\]
satisfying \(d^2=\epsilon^2=d\epsilon+\epsilon d=0\). A graded mixed cdga is a commutative algebra object in this symmetric monoidal category.

For a cdga \(A\), the derived de Rham algebra is, with the conventions of \cite[Section~1.4]{TV} and the cotangent-complex theory of \cite{IllusieI},
\[
 \DR(A/\kk)=\Sym_A(\bbL_{A/\kk}[1])
\]
with mixed differential induced by the universal derivation \(A\to\bbL_{A/\kk}\). If \(B\to A\) is a morphism, the relative de Rham algebra is denoted \(\DR(A/B)\). In the smooth underived case, its underlying graded algebra is \(\Omega^\bullet_{A/B}[\bullet]\).

\begin{definition}
Let \(X=\Spec A\). A derived foliation on \(X\), relative to \(\kk\), is a graded mixed cdga \(\DR(\cF)\) in the sense of To\"en--Vezzosi \cite[Definition~2.1.1.1]{TV}, together with a morphism of graded mixed cdga's
\[
 \DR(A/\kk)\longrightarrow \DR(\cF),
\]
such that the underlying graded cdga of \(\DR(\cF)\) is quasi-isomorphic to \(\Sym_A(\bbL_\cF[1])\) for an \(A\)-dg-module \(\bbL_\cF\). The complex \(\bbL_\cF\) is the cotangent complex of \(\cF\).
\end{definition}

We shall use this definition in its homotopical sense: derived foliations form an \(\infty\)-category of graded-mixed cdga's satisfying the customary hypotheses of quasi-freeness, connectivity, and perfectness whenever required. The strict affine arguments are carried out only after the choice of cofibrant strictly perfect CE models. The intrinsic assertions concern the corresponding objects in the localised homotopy category, rather than arbitrary non-cofibrant representatives.

After replacing \(\DR(\cF)\) by a quasi-free graded model, its weight-one part represents the cotangent complex,
\[
 \bbL_\cF\simeq\DR(\cF)^{(1)}[-1].
\]
This identification is well-defined in the homotopy category, not as a statement about an arbitrary strict model. Following \cite[Corollary~1.6.0.4]{TV}, all higher coherences of the mixed structure are encoded by the graded-mixed cdga rather than displayed as separate homotopies. The morphism \(\DR(A/\kk)\to\DR(\cF)\) gives the anchor on cotangent complexes
\[
 \bbL_{A/\kk}\longrightarrow \bbL_\cF.
\]

\begin{definition}
Let \(B\to A\) be a morphism. In the strict affine relative setting used in this paper, a relative derived foliation over \(A/B\) is a graded mixed cdga under the relative de Rham algebra \(\DR(A/B)\), in the sense suggested by the relative de Rham foliation construction of \cite[Section~2.1]{TV}. Thus it is a graded mixed cdga
\[
 \DR(A/B)\longrightarrow \DR(\cF_{A/B})
\]
whose underlying graded cdga is quasi-isomorphic to \(\Sym_A(\bbL_{\cF_{A/B}}[1])\). This is the strict affine relative model used for the unfoldings below; it is not intended to replace the absolute definition of To\"en--Vezzosi on general derived stacks.
\end{definition}

\begin{example}
The identity map
$
 \DR(A/B)\longrightarrow \DR(A/B)$
defines the relative de Rham foliation. Its cotangent complex is \(\bbL_{A/B}\). Geometrically it is the foliation by the fibres of \(\Spec A\to\Spec B\).
\end{example}

\subsection{dg-Lie algebroids}

Let \(T_{A/\kk}=\Der_\kk(A,A)\). We use the Lie--Rinehart model for dg-Lie algebroids, originating in Rinehart's algebraic formulation of Lie algebroids and used in derived Lie algebroid theory \cite{Rinehart63,Nuiten,TVRH,TV}. A dg-Lie algebroid over \(A/\kk\) is an \(A\)-dg-module \(\g\), together with a \(\kk\)-linear dg-Lie bracket, and an anchor
\[
 \rho\from \g\longrightarrow T_{A/\kk}
\]
which is \(A\)-linear, is a morphism of dg-Lie algebras, and satisfies the Lie--Rinehart identity
\[
 [x,ay]=\rho(x)(a)y+(-1)^{|x||a|}a[x,y]
\]
for homogeneous \(a\in A\) and \(x,y\in\g\). A relative dg-Lie algebroid over \(A/B\) is defined similarly with anchor landing in \(T_{A/B}\).

\begin{construction}
Let \(\g\) be a cofibrant strictly perfect relative dg-Lie algebroid over \(A/B\). Its Chevalley--Eilenberg graded-mixed cdga is the standard comparison algebra between dg-Lie algebroids and derived foliations \cite[Section~2.2]{TV}; see also \cite{Rinehart63,Nuiten} for the Lie--Rinehart and dg-Lie-algebroid background:
\[
 \CE^*(\g):=\Sym_A(\g^\vee[1]).
\]
The Chevalley--Eilenberg mixed differential is determined by the anchor and the bracket, while the internal differential is induced by the differential of \(\g\). In degree one, for \(\alpha\in\g^\vee\) and homogeneous \(x,y\in\g\), the bracket-anchor part of the mixed differential has the form
\[
 \epsilon_{CE}\alpha(x,y)=
 \rho(x)(\alpha(y))
 -(-1)^{|x||y|}\rho(y)(\alpha(x))
 -\alpha([x,y]),
\]
with the standard Koszul signs in the full formula. The dual of the anchor induces
\[
 \DR(A/B)\longrightarrow \CE^*(\g),
\]
so \(\CE^*(\g)\) is a relative derived foliation.
\end{construction}

\begin{definition}
A relative derived foliation \(\cF_{A/B}\) is called strictly CE-presented if there is a cofibrant strictly perfect relative dg-Lie algebroid \(\g\) and an equivalence of graded mixed cdga's under \(\DR(A/B)\)
\[
 \DR(\cF_{A/B})\simeq\CE^*(\g).
\]
In the strict theorem the presentation \(\g\) is included among the hypotheses. An arbitrary perfect CE presentation is first replaced locally by a cofibrant strictly perfect model before the strict affine theorem is applied.
\end{definition}

\begin{warning}
The strict controller associated with a chosen dg-Lie algebroid presentation is not claimed to be independent of that presentation as an ordinary dg-Lie algebroid. The intrinsic controller is the homotopy quotient of the graded-mixed derivations of \(\DR(\cF_{A/B})\), constructed in Section~\ref{sec:intrinsic}. Accordingly, the strict affine classification is formulated in a chosen cofibrant CE model; invariance is obtained only after passage to the graded-mixed controller and the corresponding homotopy quotient.
\end{warning}

\section{Strict transversal unfoldings}

Let \(B\to A\) be a smooth morphism between smooth commutative \(\kk\)-algebras. Write
\[
 q\from T_{A/\kk}\longrightarrow A\otimes_B T_{B/\kk}
\]
for the quotient map in
\[
 0\longrightarrow T_{A/B}\longrightarrow T_{A/\kk}
 \xto{q} A\otimes_B T_{B/\kk}\longrightarrow 0.
\]
There is always the map \(T_{B/\kk}\to A\otimes_B T_{B/\kk}\), \(\xi\mapsto1\otimes\xi\), but it need not be injective unless \(B\to A\) is faithfully flat. We therefore do not regard \(T_{B/\kk}\) as a literal submodule of \(A\otimes_B T_{B/\kk}\). Basic transverse sections are therefore represented by pairs over this map, equivalently by fibre products over \(A\otimes_B T_{B/\kk}\).

\begin{definition}
Let \(\rho\from\g\to T_{A/B}\) be a relative dg-Lie algebroid. A strict transversal unfolding of \(\g\), in the spirit of Suwa's unfolding theory and its Lie-algebroid extension \cite{Suwa81,Suwa83,CMQ}, is the strict algebraic form of an integrable transverse deformation: it is an absolute dg-Lie algebroid
\[
 \widetilde\rho\from\widetilde\g\longrightarrow T_{A/\kk}
\]
together with an inclusion of dg-Lie algebroids \(\g\hookrightarrow\widetilde\g\), such that:
\begin{enumerate}[label=(\roman*)]
\item \(\g\) is a dg-Lie ideal in \(\widetilde\g\);
\item \(\widetilde\rho|_\g=\rho\), viewed as a map to \(T_{A/B}\subset T_{A/\kk}\);
\item there is an isomorphism of \(A\)-dg-modules
$
 \widetilde\g/\g\simeq A\otimes_B T_{B/\kk};$
\item under this isomorphism the induced anchor on the quotient is the identity of \(A\otimes_B T_{B/\kk}\).
\end{enumerate}
\end{definition}

Throughout the paper, morphisms of strict transversal unfoldings are understood in this rigid sense: they are isomorphisms of absolute dg-Lie algebroids over \(A\) which restrict to the identity on the fixed relative algebroid \(\g\) and induce the identity on the prescribed quotient \(A\otimes_B T_{B/\kk}\). Thus the groupoid records possible stabilisers of an unfolding, but it does not quotient by automorphisms of the original relative foliation. This convention is required in the non-effective case, where central isotropy survives through automorphisms of the unfolding.
The defining diagram is
\[
\begin{tikzcd}[column sep=large,row sep=large]
0 \arrow[r] & \g \arrow[r] \arrow[d,"\rho"'] & \widetilde\g \arrow[r] \arrow[d,"\widetilde\rho"] & A\otimes_B T_{B/\kk} \arrow[r] \arrow[d,equal] & 0 \\
0 \arrow[r] & T_{A/B} \arrow[r] & T_{A/\kk} \arrow[r,"q"] & A\otimes_B T_{B/\kk} \arrow[r] & 0 .
\end{tikzcd}
\]

\begin{definition}
Let \(\cF_{A/B}\) be a strictly CE-presented relative derived foliation represented by \(\g\). A strict CE-presented transversal unfolding of \((\cF_{A/B},\g)\) is a strict transversal unfolding \(\widetilde\g\) of \(\g\). The associated absolute derived foliation is
\[
 \DR(\widetilde\cF):=\CE^*(\widetilde\g).
\]
Its cotangent complex fits into the triangle
\[
 A\otimes_B\bbL_{B/\kk}
 \longrightarrow
 \bbL_{\widetilde\cF}
 \longrightarrow
 \bbL_{\cF_{A/B}}
 \longrightarrow
 (A\otimes_B\bbL_{B/\kk})[1].
\]
\end{definition}

\begin{remark}
The cotangent triangle is the derived replacement of the rank condition. It says that the transverse directions of the absolute foliation are the directions of the base.
\end{remark}

\section{Basic first-order derivations}

The first-order operator algebra used here is the Lie--Rinehart analogue of Atiyah's algebroid of first-order differential operators \cite{Atiyah57,Rinehart63}. In the present context the operators must also preserve the dg-Lie bracket and have symbols compatible with the anchor, so that they act on the CE algebra by graded-mixed derivations.

\subsection{The derivation dg-Lie algebra}

\begin{definition}
Let \(\rho\from\g\to T_{A/B}\) be a relative dg-Lie algebroid. The dg-Lie algebra \(D^1_{A/\kk}(\g)\), which is the derived analogue of the first-order operator algebroid used in \cite{Atiyah57,Rinehart63,CMQ}, consists, in degree \(n\), of pairs
\[
 (\theta,\delta),
 \qquad
 \theta\in T_{A/\kk}^n,
 \quad
 \delta\from\g\to\g[n],
\]
satisfying, for homogeneous \(a\in A\) and \(x,y\in\g\),
\[
 \delta(ax)=\theta(a)x+(-1)^{n|a|}a\delta(x),
\]
\[
 \delta([x,y])=[\delta x,y]+(-1)^{n|x|}[x,\delta y],
\]
\[
 \rho(\delta x)=[\theta,\rho(x)].
\]
The differential is the commutator with the internal differentials,
\[
 d(\theta,\delta)=([d_A,\theta],[d_\g,\delta]),
\]
which in the underived algebra case reduces to the internal differential on the endomorphism complex of \(\g\). The bracket is
\[
 [(\theta,\delta),(\theta',\delta')]
 =([\theta,\theta'],[\delta,\delta']).
\]
The symbol is the dg-Lie morphism
\[
 \sigma\from D^1_{A/\kk}(\g)\longrightarrow T_{A/\kk},
 \qquad
 (\theta,\delta)\longmapsto\theta.
\]
\end{definition}

\begin{lemma}
The complex \(D^1_{A/\kk}(\g)\) is a dg-Lie algebra and \(\sigma\) is a morphism of dg-Lie algebras.
\end{lemma}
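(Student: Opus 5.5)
The plan is to realise \(D^1_{A/\kk}(\g)\) as a subcomplex of an ambient dg-Lie algebra and check closure, so that the dg-Lie axioms are inherited and need not be verified by hand. The ambient object is
\[
 E:=T_{A/\kk}\times\operatorname{End}_\kk(\g),
\]
the product of the dg-Lie algebra \(T_{A/\kk}\subset\operatorname{End}_\kk(A)\) with the graded-commutator dg-Lie algebra \(\operatorname{End}_\kk(\g)\). Its differential is \(([d_A,-],[d_\g,-])\) and its bracket is componentwise. This \(E\) is a dg-Lie algebra, being a product of commutator dg-Lie algebras of complexes (restricted to the sub dg-Lie algebra of derivations on the first factor). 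The bracket and differential defined on \(D^1_{A/\kk}(\g)\) are exactly the restrictions of those on \(E\). Moreover \(\sigma\) is the restriction of the projection \(E\to T_{A/\kk}\), which is manifestly a morphism of dg-Lie algebras. So it suffices to show that the three defining conditions cut out a subcomplex of \(E\) closed under the bracket.

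\textbf{Closure under the bracket.} Take \((\theta,\delta)\) of degree \(n\) and \((\theta',\delta')\) of degree \(m\).
\begin{enumerate}[label=(\alph*)]
\item \emph{Leibniz rule over \(A\).} Expand \([\delta,\delta'](ax)=\delta\delta'(ax)-(-1)^{nm}\delta'\delta(ax)\) using the Leibniz rule twice. The cross terms \(\theta(a)\delta'(x)\) and \(\theta'(a)\delta(x)\) cancel by the Koszul signs, leaving \([\theta,\theta'](a)x+(-1)^{(n+m)|a|}a[\delta,\delta'](x)\).
\item \emph{Derivation of the bracket.} The condition \(\delta([x,y])=[\delta x,y]+(-1)^{n|x|}[x,\delta y]\) says that \(\delta\) is a graded derivation of the bracket of \(\g\). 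The graded commutator of two such derivations is again a derivation; this is the standard fact that \(\Der(\g)\) is a sub dg-Lie algebra of \(\operatorname{End}(\g)\).
\item \emph{Anchor compatibility.} Apply the anchor condition twice:
\[
 \rho([\delta,\delta']x)=[\theta,[\theta',\rho x]]-(-1)^{nm}[\theta',[\theta,\rho x]]=[[\theta,\theta'],\rho x],
\]
where the last equality is the graded Jacobi identity in \(T_{A/\kk}\).
\end{enumerate}

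\textbf{Closure under the differential.} We show that \(([d_A,\theta],[d_\g,\delta])\) satisfies the three conditions. Since \(d_\g\) is a degree-one pair \((d_A,d_\g)\)-compatible element in the relevant sense, the same commutator computation as above applies. Concretely:
\begin{enumerate}[label=(\alph*)]
\item The Leibniz rule for \(d_\g\) on the \(A\)-module \(\g\), namely \(d_\g(ax)=d_A(a)x+(-1)^{|a|}a\,d_\g x\), plays the role of a first-order condition with symbol \(d_A\). Although \(d_A\) need not lie in \(T_{A/\kk}\) as a degree-one element in the strict sense, the commutator computation of step (a) goes through verbatim.
\item The bracket of \(\g\) is a chain map, so \(d_\g\) is a derivation of the bracket; hence \([d_\g,\delta]\) is one as well.
\item The anchor \(\rho\) is a chain map, so \(\rho d_\g=[d_A,\rho(-)]\) in \(T_{A/\kk}\). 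Together with the Jacobi identity this gives \(\rho([d_\g,\delta]x)=[[d_A,\theta],\rho x]\).
\end{enumerate}

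\textbf{Main obstacle.} No step is deep; the only real difficulty is sign bookkeeping. The hardest part is the Leibniz check (a) for both the bracket and the differential, where the mixed degrees of \(a\), \(x\), \(\theta\) and \(d_A\) must be tracked carefully. I would organise it by noting that all three conditions are linear, and that each one says some element of \(E\) is intertwined by a structure map (the action map \(A\otimes\g\to\g\), the bracket \(\g\otimes\g\to\g\), and \(\rho\)). For any fixed chain map \(f\), the set of elements satisfying such an intertwining relation is automatically closed under commutators and \([d,-]\). This single observation then handles all cases uniformly and avoids computing each sign separately.
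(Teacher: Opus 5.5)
Your proposal is correct and follows the paper's proof: you check that the Leibniz, bracket-derivation and anchor conditions are closed under commutators (the anchor case via the graded Jacobi identity), and you treat the differential as the same computation with one entry equal to \((d_A,d_\g)\). Your explicit ambient dg-Lie algebra \(T_{A/\kk}\times\operatorname{End}_\kk(\g)\) and the ``stabiliser of a chain map'' remark only make explicit what the paper leaves implicit, namely that the dg-Lie axioms and the morphism property of \(\sigma\) are inherited from the ambient algebra.
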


\begin{proof}
The commutator of two first-order operators is first-order, as in the usual Atiyah--Lie--Rinehart first-order calculus \cite{Atiyah57,Rinehart63}, and its symbol is the commutator of the symbols. The Leibniz identity follows by expanding \([\delta,\delta'](ax)\) and observing that the second-order terms cancel with the Koszul signs. The bracket-derivation identity follows because the commutator of two derivations of a dg-Lie algebra is again a derivation. Finally,
\[
 \rho([\delta,\delta']x)
 =[\theta,[\theta',\rho(x)]]
 -(-1)^{|\theta||\theta'|}[\theta',[\theta,\rho(x)]],
\]
which is \([[\theta,\theta'],\rho(x)]\) by the graded Jacobi identity. Hence anchor-compatibility is preserved. Compatibility with the internal differential is the same calculation with one of the two derivations equal to the differential.
\end{proof}

\begin{definition}
The inner derivation map is
$
 \iota\from \g\longrightarrow D^1_{A/\kk}(\g),
 $ given by $
 x\longmapsto (\rho(x),\operatorname{ad}_x).
$
The relative dg-Lie algebroid \(\g\) is called effective if \(\iota\) is a monomorphism of complexes.
\end{definition}

\begin{warning}
Effectivity is a property of the chosen strict model. Intrinsically, the transverse controller is the crossed module, or homotopy quotient; the ordinary effective quotient is used only when the inner-action morphism is represented by a monomorphism in the chosen cofibrant model.
\end{warning}

\begin{lemma}
The map \(\iota\) is a morphism of dg-Lie algebras. Its kernel is the complex
\[
 \z(\g):=\{x\in\g: \rho(x)=0,\ \operatorname{ad}_x=0\}.
\]
Moreover, the image of \(\iota\) is a dg-Lie ideal in every dg-Lie subalgebra of \(D^1_{A/\kk}(\g)\) stable under the action on \(\g\).
\end{lemma}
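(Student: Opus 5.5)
The proof is a direct verification in three parts, using only the defining identities of \(D^1_{A/\kk}(\g)\) and of a dg-Lie algebroid. Throughout, signs follow the Koszul convention.

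\emph{Well-definedness.} First check that \(\iota(x)=(\rho(x),\operatorname{ad}_x)\) lies in \(D^1_{A/\kk}(\g)\) for homogeneous \(x\) of degree \(n\).
\begin{itemize}
\item The Leibniz condition \(\operatorname{ad}_x(ay)=\rho(x)(a)y+(-1)^{n|a|}a[x,y]\) is exactly the Lie--Rinehart identity.
\item The bracket-derivation condition is the graded Jacobi identity in \(\g\).
\item Anchor compatibility, \(\rho([x,y])=[\rho(x),\rho(y)]\), holds because \(\rho\) is a dg-Lie morphism.
\end{itemize}

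\emph{\(\iota\) is a dg-Lie morphism.} Check the differential and the bracket separately.
\begin{itemize}
\item For the differential: \(d\iota(x)=([d_A,\rho(x)],[d_\g,\operatorname{ad}_x])\). Since \(\rho\) is a chain map, the first component is \(\rho(d_\g x)\). Since \(d_\g\) is a derivation of the bracket, the second is \(\operatorname{ad}_{d_\g x}\).
\item For the bracket: \([\iota x,\iota y]=([\rho x,\rho y],[\operatorname{ad}_x,\operatorname{ad}_y])\). This equals \((\rho[x,y],\operatorname{ad}_{[x,y]})\), again by \(\rho\) being a Lie morphism and by the Jacobi identity.
\end{itemize}
The kernel is immediate from the definition: \(\iota(x)=0\) exactly when \(\rho(x)=0\) and \(\operatorname{ad}_x=0\), which is \(\z(\g)\). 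Because \(\iota\) is a chain map, \(\z(\g)\) is a subcomplex.

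\emph{Ideal property.} This is the only step with content. Let \(L\subset D^1_{A/\kk}(\g)\) be a dg-Lie subalgebra containing \(\iota(\g)\); this is how I read ``stable under the action on \(\g\)''. Take \((\theta,\delta)\in L\) of degree \(m\) and \(x\in\g\). The key identity to prove is
\[
 [(\theta,\delta),\iota(x)]=\iota(\delta x).
\]
\begin{itemize}
\item First component: \([\theta,\rho(x)]=\rho(\delta x)\), by the anchor-compatibility axiom.
\item Second component: for any \(y\), \([\delta,\operatorname{ad}_x](y)=\delta[x,y]-(-1)^{m|x|}[x,\delta y]=[\delta x,y]\), using that \(\delta\) is a bracket derivation. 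Hence \([\delta,\operatorname{ad}_x]=\operatorname{ad}_{\delta x}\).
\end{itemize}
So \([L,\iota(\g)]\subset\iota(\g)\). The image is also closed under \(d\) by the previous part, so \(\iota(\g)\) is a dg-Lie ideal of \(L\).

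The main care needed is keeping the Koszul signs consistent in this last bracket computation.
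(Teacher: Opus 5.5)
Your proposal is correct and follows essentially the same route as the paper. In both, the Lie--Rinehart identity makes \(\operatorname{ad}_x\) first-order with symbol \(\rho(x)\), bracket compatibility comes from the graded Jacobi identity and \(\rho\) being a Lie morphism, and the ideal property comes from \([(\theta,\delta),\iota(x)]=\iota(\delta x)\), which follows from the bracket-derivation and anchor-compatibility axioms. You are slightly more explicit than the paper in two places: you check compatibility with the differential, and you state that the subalgebra must contain \(\iota(\g)\); the paper's computation in fact shows \(\iota(\g)\) is an ideal in all of \(D^1_{A/\kk}(\g)\).
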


\begin{proof}
The Lie--Rinehart identity says that \(\operatorname{ad}_x\) is a first-order operator with symbol \(\rho(x)\). The equality
\[
 [\operatorname{ad}_x,\operatorname{ad}_y]=\operatorname{ad}_{[x,y]}
\]
is the graded Jacobi identity, and \(\rho([x,y])=[\rho(x),\rho(y)]\). The stated description of the kernel follows directly from the definition. If \((\theta,\delta)\in D^1_{A/\kk}(\g)\), then
\[
 [(\theta,\delta),(\rho(x),\operatorname{ad}_x)]
 =(\rho(\delta x),\operatorname{ad}_{\delta x}),
\]
again by the derivation property and the anchor-compatibility condition. Thus the image is an ideal.
\end{proof}

\subsection{Basic derivations}

\begin{definition}
The dg-Lie algebra of basic first-order derivations of \(\g\) over \(B\) is the sub-dg-Lie algebra
\[
 D^1_{\bas}(\g/B)\subset D^1_{A/\kk}(\g)\times T_{B/\kk}
\]
whose homogeneous sections are triples \((\theta,\delta,\xi)\), with \((\theta,\delta)\in D^1_{A/\kk}(\g)\) and \(\xi\in T_{B/\kk}\), such that
\[
 q(\theta)=1\otimes\xi.
\]
The bracket is induced by
$
 [(\theta,\delta,\xi),(\theta',\delta',\eta)]
 =([\theta,\theta'],[\delta,\delta'],[\xi,\eta]).
$
\end{definition}

Since \(\rho(\g)\subset T_{A/B}\), the inner derivation map lands in the kernel of the projection to \(T_{B/\kk}\). Explicitly, in the basic derivation algebra it is
\[
 \iota(x)=\bigl(\rho(x),\operatorname{ad}_x,0\bigr).
\]
Thus
$
 \iota\from\g\longrightarrow D^1_{\bas}(\g/B)
$
defines a morphism of dg-Lie algebras.

\begin{definition}
Assume \(\g\) is effective. The strict derived transverse symmetry algebroid of \(\g\) over \(B\) is the quotient dg-Lie algebra
\[
 \uuder(\g/B):=D^1_{\bas}(\g/B)/\iota(\g).
\]
It has an anchor
$
 a_{\uuder}\from \uuder(\g/B)\longrightarrow T_{B/\kk}
$
induced by \((\theta,\delta,\xi)\mapsto\xi\).
\end{definition}

\begin{lemma}\label{lem:base-LR-controller}
The \(B\)-module structure on \(D^1_{\bas}(\g/B)\), obtained by restricting scalars along \(B\to A\), makes
\[
 D^1_{\bas}(\g/B)\longrightarrow T_{B/\kk},
 \qquad (\theta,\delta,\xi)\longmapsto \xi,
\]
a dg-Lie algebroid over \(B\). The image of
\(\iota\from\g\to D^1_{\bas}(\g/B)\) is a vertical dg-Lie ideal. Consequently, in the effective case, \(\uuder(\g/B)\) inherits a dg-Lie algebroid structure over \(B\), with anchor \(a_{\uuder}\).
\end{lemma}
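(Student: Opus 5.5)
We verify the three assertions in turn: the \(B\)-module and Lie--Rinehart structure on \(D^1_{\bas}(\g/B)\), the ideal property of \(\iota(\g)\), and the passage to the quotient.

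\emph{The \(B\)-module structure.} For \(b\in B\) set
\[
 b\cdot(\theta,\delta,\xi):=(b\theta,\,b\delta,\,b\xi),
\]
where \(b\theta\) and \(b\delta\) use the image of \(b\) in \(A\). We must check that this triple is again basic.
\begin{itemize}
\item The pair \((b\theta,b\delta)\) lies in \(D^1_{A/\kk}(\g)\). The Leibniz rule with symbol \(b\theta\) is immediate. The anchor condition \(\rho(b\delta x)=[b\theta,\rho(x)]\) holds because \(\rho(x)\in T_{A/B}\) kills the image of \(B\), so \([b\theta,\rho(x)]=b[\theta,\rho(x)]\).
\item The bracket-derivation condition for \(b\delta\) is the first delicate point. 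We have \(b\delta([x,y])=[b\delta x,y]+(-1)^{n|x|}[x,b\delta y]\) up to the correction term \(\rho(x)(b)\,\delta y\). By the Lie--Rinehart identity this term vanishes, since \(\rho(x)(b)=0\).
\item The basic condition holds: \(q(b\theta)=b\,q(\theta)=b\otimes\xi=1\otimes b\xi\), because \(q\) is \(A\)-linear.
\end{itemize}
The anchor \((\theta,\delta,\xi)\mapsto\xi\) is \(B\)-linear, and it is a dg-Lie morphism by the definition of the bracket. The Lie--Rinehart identity for
\[
 [(\theta,\delta,\xi),\,b(\theta',\delta',\eta)]
\]
follows componentwise: \([\theta,b\theta']=\theta(b)\theta'+b[\theta,\theta']\), and similarly for \(\delta\) and \(\xi\). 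It remains to identify \(\theta(b)\) with \(\xi(b)\) in \(A\), which holds because \(q(\theta)=1\otimes\xi\): the value of a derivation of \(A\) on the image of \(B\) factors through \(q\). Compatibility with the differentials is clear, since \(d_B=0\) on the smooth algebra \(B\) and the action of \(b\) commutes with \(d_A\) and \(d_\g\).

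\emph{The ideal property.} Write \(\iota(x)=(\rho(x),\operatorname{ad}_x,0)\). The ideal property of \(\iota(\g)\) comes from the earlier lemma on \(\iota\), via the identity
\[
 [(\theta,\delta),(\rho(x),\operatorname{ad}_x)]=(\rho(\delta x),\operatorname{ad}_{\delta x}).
\]
Adding the third component gives \([\xi,0]=0\), so the bracket is \(\iota(\delta x)\). The image is stable under \(B\), since \(b\,\iota(x)=\iota(bx)\). It is vertical, since it lies in the kernel of the anchor, and it is closed under \(d\) because \(\iota\) is a chain map.

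\emph{The quotient.} In the effective case the quotient \(\uuder(\g/B)=D^1_{\bas}(\g/B)/\iota(\g)\) inherits a \(B\)-module structure, a dg-Lie bracket and an anchor. These are well defined because \(\iota(\g)\) is a \(B\)-submodule and a dg-Lie ideal contained in the kernel of the anchor. The Lie--Rinehart identity descends from \(D^1_{\bas}(\g/B)\).

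The main obstacle is the Lie--Rinehart identity for \(D^1_{\bas}(\g/B)\) over \(B\). One must show that \(\theta(b)=\xi(b)\) as elements of \(A\) using only \(q(\theta)=1\otimes\xi\), without assuming that \(T_{B/\kk}\to A\otimes_B T_{B/\kk}\) is injective. The plan is to write the exact sequence of tangent modules as the \(A\)-dual of the cotangent sequence \(A\otimes_B\Omega_B\to\Omega_A\). Then \(\theta(b)=\langle\theta,db\rangle\) depends only on the restriction of \(\theta\) to \(A\otimes_B\Omega_B\), which is \(q(\theta)\), and this restriction evaluated on \(db\) equals \(\xi(b)\).
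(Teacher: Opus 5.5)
Your proposal is correct and follows the same route as the paper: restriction of scalars along \(B\to A\), the componentwise commutator computation giving \([u,bv]=\xi(b)v+b[u,v]\), verticality of inner derivations, and the ideal property taken from the earlier lemma on \(\iota\). You also supply details that the paper's proof leaves implicit. These are that \(b\cdot u\) remains basic, that \(\iota\) is \(B\)-linear so its image is a \(B\)-submodule, and that \(q(\theta)=1\otimes\xi\) forces \(\theta(b)=\xi(b)\) in \(A\) via the dual of the cotangent sequence, without needing injectivity of \(T_{B/\kk}\to A\otimes_BT_{B/\kk}\).
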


\begin{proof}
For \(b\in B\) and \(u=(\theta,\delta,\xi)\), multiplication by \(b\) means multiplication by its image in \(A\) on the first two components and ordinary multiplication on the vector field \(\xi\). If \(v=(\theta',\delta',\eta)\), the commutator calculation in \(D^1_{A/\kk}(\g)\) gives
\[
 [u,bv]=\xi(b)v+b[u,v],
\]
with the usual Koszul sign in the dg case. This is the Lie--Rinehart identity over \(B\), and the anchor is compatible with brackets. Since inner derivations have base component zero, the image of \(\iota\) lies in the kernel of the base anchor. The ideal property was proved above, and quotienting by this vertical ideal gives the asserted structure on \(\uuder(\g/B)\).
\end{proof}

Let \(T^{\bas}_{A/\kk}\subset T_{A/\kk}\times T_{B/\kk}\) denote the Lie algebra of pairs \((Y,\xi)\) such that \(q(Y)=1\otimes\xi\). The construction is summarised by
\[
\begin{tikzcd}[column sep=large,row sep=large]
0 \arrow[r] & \g \arrow[r,"\iota"] \arrow[d,"\rho"'] & D^1_{\bas}(\g/B) \arrow[r] \arrow[d,"\sigma"] & \uuder(\g/B) \arrow[r] \arrow[d,"a_{\uuder}"] & 0 \\
0 \arrow[r] & T_{A/B} \arrow[r] & T^{\bas}_{A/\kk} \arrow[r] & T_{B/\kk} \arrow[r] & 0 .
\end{tikzcd}
\]

\section{The affine strict theorem}

We begin with the classification in the strict affine case. The proof is given explicitly on sections, for it is here that the transverse term of an unfolding appears as an integrability condition rather than an arbitrary deformation parameter. The argument follows the classical pattern of Suwa and its Lie-algebroid reformulation, but with the CE dg-Lie algebroid replacing the ordinary foliation \cite{Suwa81,Suwa83,Quallbrunn,CMQ}.

\begin{definition}
A flat splitting of the anchor
$
 a_{\uuder}\from \uuder(\g/B)\longrightarrow T_{B/\kk}
$
is a \(B\)-linear morphism of dg-Lie algebroids over \(B\)
\[
 s\from T_{B/\kk}\longrightarrow \uuder(\g/B)
\]
whose composite with the anchor is the identity. Equivalently, it is a Lie-algebraic section of the anchor compatible with the \(B\)-module structures. The set of flat splittings is denoted
\[
 \Flat(T_{B/\kk},\uuder(\g/B)).
\]
\end{definition}

\begin{definition}\label{def:basic-part}
Assume that \(B\to A\) is a smooth morphism between smooth commutative \(\kk\)-algebras and that
$
 \rho\from\g\longrightarrow T_{A/B} $
is a cofibrant strictly perfect relative dg-Lie algebroid. Let
\[
 0\longrightarrow \g\longrightarrow\widetilde\g
 \longrightarrow A\otimes_B T_{B/\kk}\longrightarrow0
\]
be a strict transversal unfolding. Its \emph{basic part} is the fibre product
\[
 \widetilde\g^{\bas}
 :=
 \widetilde\g\times_{A\otimes_B T_{B/\kk}}T_{B/\kk},
\]
where \(T_{B/\kk}\to A\otimes_B T_{B/\kk}\) is \(\xi\mapsto1\otimes\xi\). Thus an element of \(\widetilde\g^{\bas}\) is a pair \((\widetilde\xi,\xi)\) such that the image of \(\widetilde\xi\) in \(A\otimes_B T_{B/\kk}\) is \(1\otimes\xi\). The bracket is
\[
 [(\widetilde\xi,\xi),(\widetilde\eta,\eta)]
 :=([\widetilde\xi,\widetilde\eta],[\xi,\eta]),
\]
and the anchor is the pair \((\widetilde\rho(\widetilde\xi),\xi)\). Hence \(\widetilde\g^{\bas}\) is a \(B\)-dg-Lie algebroid fitting into an exact sequence
\[
 0\longrightarrow \g\longrightarrow \widetilde\g^{\bas}
 \longrightarrow T_{B/\kk}\longrightarrow0
\]
where \(\g\) is regarded by restriction of scalars along \(B\to A\). This definition does not require \(T_{B/\kk}\to A\otimes_B T_{B/\kk}\) to be injective. We denote by
\(\Unf^{\tr,\eff}(\g/B)\) the groupoid of strict transversal unfoldings when \(\g\) is effective.
\end{definition}

\begin{lemma}\label{lem:effective-discrete}
Assume that \(\g\) is effective. Then every automorphism of a strict transversal unfolding which is the identity on \(\g\) and on the quotient \(A\otimes_B T_{B/\kk}\) is the identity. Consequently the effective unfolding groupoid is equivalent to a set.
\end{lemma}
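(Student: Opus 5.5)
Let \(\phi\from\widetilde\g\to\widetilde\g\) be such an automorphism. The plan is to study \(\phi-\id\) directly. Since \(\phi\) is the identity on \(\g\) and on the quotient \(A\otimes_B T_{B/\kk}\), the difference \(\psi:=\phi-\id\) vanishes on \(\g\) and takes values in \(\g\). It therefore factors as an \(A\)-linear map of complexes
\[
 \psi\from A\otimes_B T_{B/\kk}\longrightarrow\g .
\]
The map \(\psi\) is \(A\)-linear because \(\phi\) is an isomorphism of \(A\)-dg-modules. It commutes with differentials because \(\phi\) does. It is therefore enough to show that \(\psi(\widetilde\xi)=0\) for every lift \(\widetilde\xi\in\widetilde\g\) of an element of \(A\otimes_B T_{B/\kk}\).

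Fix such a lift \(\widetilde\xi\). The first step uses anchor compatibility. Since \(\phi\) is a morphism of dg-Lie algebroids, \(\widetilde\rho(\phi\widetilde\xi)=\widetilde\rho(\widetilde\xi)\), and hence \(\rho(\psi\widetilde\xi)=0\). The second step uses the bracket. For \(x\in\g\) we have \(\phi x=x\), and therefore
\[
 [\widetilde\xi+\psi\widetilde\xi,\,x]=\phi[\widetilde\xi,x]=[\widetilde\xi,x].
\]
The last equality holds because \(\g\) is an ideal, so \([\widetilde\xi,x]\in\g\) is fixed by \(\phi\). It follows that \(\operatorname{ad}_{\psi\widetilde\xi}=0\) on \(\g\). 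Together with \(\rho(\psi\widetilde\xi)=0\), this gives \(\iota(\psi\widetilde\xi)=0\). In other words, \(\psi\widetilde\xi\) lies in the kernel \(\z(\g)\) of \(\iota\) described in the lemma on inner derivations. Effectivity says exactly that this kernel vanishes, so \(\psi\widetilde\xi=0\) and \(\phi=\id\).

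Only Koszul signs need care. For homogeneous \(\widetilde\xi\), \(\phi\) has degree zero, so no extra signs appear in \([\phi\widetilde\xi,\phi x]=\phi[\widetilde\xi,x]\). Effectivity is used only in the form \(\ker\iota=0\), degreewise, as a monomorphism of complexes. Since \(A\otimes_B T_{B/\kk}\) is spanned by the images of elements of \(\widetilde\g\), there is no need for \(T_{B/\kk}\to A\otimes_B T_{B/\kk}\) to be injective.

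For the consequence, note that a groupoid all of whose automorphism groups are trivial is equivalent to its set of isomorphism classes. The rigid morphisms here are, by the convention fixed after the definition of strict transversal unfoldings, automorphisms restricting to the identity on \(\g\) and on the quotient. So the effective unfolding groupoid is equivalent to the set \(|\Unf^{\tr,\eff}(\g/B)|\). I expect no serious obstacle. The one point to check is that the bracket identity genuinely forces \(\operatorname{ad}_{\psi\widetilde\xi}=0\) on all of \(\g\), which uses that \(\g\) is an ideal, condition (i) of the definition.
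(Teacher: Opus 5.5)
Your proof is correct, and its core computation is the paper's. Writing \(c=\phi(\widetilde\xi)-\widetilde\xi\in\g\): compatibility with the anchor gives \(\rho(c)=0\); the bracket identity, together with \(\phi|_{\g}=\id\) and the ideal property, gives \(\operatorname{ad}_c=0\); and effectivity then forces \(c=0\).

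The difference is where you apply this computation. The paper runs it only for basic lifts \((\widetilde\xi,\xi)\in\widetilde\g^{\bas}\). It then appeals to Lemma~\ref{lem:automatic-basic-reconstruction} (in effect, only the surjectivity of \(\Phi_{\widetilde\g}\)) to conclude that an \(A\)-linear automorphism fixing the basic part fixes all of \(\widetilde\g\).

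You observe that nothing in the computation uses basicness, so it applies verbatim to every element of \(\widetilde\g\). This removes the detour through the fibre-product basic part and the forward reference. It also removes any reliance on smoothness or flatness of \(B\to A\) at this step, so your argument is both shorter and more self-contained.

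The paper's formulation has one mild advantage: it phrases the stabiliser in the basic-lift language that is reused later. In Proposition~\ref{prop:central-stabiliser-complex} and Theorem~\ref{thm:higher}, the same difference \(c\) becomes a \(B\)-linear map \(T_{B/\kk}\to\z(\g)\) satisfying a cocycle condition.

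Your preliminary factorisation of \(\psi\) through \(A\otimes_B T_{B/\kk}\) is correct but is never used. The deduction that the groupoid is equivalent to its set of isomorphism classes is fine.
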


\begin{proof}
Let \(\varphi\) be such an automorphism of \(\widetilde\g\). If \((\widetilde\xi,\xi)\in\widetilde\g^{\bas}\), then
\[
 c_\xi:=\varphi(\widetilde\xi)-\widetilde\xi\in\g .
\]
Since \(\varphi\) preserves the anchor and induces the identity on the quotient, \(\rho(c_\xi)=0\). Since \(\varphi\) is the identity on \(\g\), for every \(x\in\g\) one has
\[
 [\widetilde\xi+c_\xi,x]
 =[\varphi(\widetilde\xi),\varphi(x)]
 =\varphi([\widetilde\xi,x])
 =[\widetilde\xi,x].
\]
Thus \(\operatorname{ad}_{c_\xi}=0\). Hence \(c_\xi\in\ker\iota\), and effectivity gives \(c_\xi=0\). Therefore \(\varphi\) is the identity on the basic part. Lemma~\ref{lem:automatic-basic-reconstruction} then implies that \(\varphi\) is the identity on all of \(\widetilde\g\), because the basic part generates \(\widetilde\g\) after extension of scalars from \(B\) to \(A\).
\end{proof}

\begin{lemma}\label{lem:automatic-basic-reconstruction}
Let \(B\to A\) be smooth, and let \(\widetilde\g\) be a strict transversal unfolding of \(\g\). Write \(p_1\from\widetilde\g^{\bas}\to\widetilde\g\) for the projection from the fibre product. Then the map
\[
 \Phi_{\widetilde\g}\from
 (A\otimes_B\widetilde\g^{\bas})/\widetilde{\cR}
 \longrightarrow \widetilde\g,
 \qquad a\otimes u\longmapsto a p_1(u),
\]
where \(\widetilde{\cR}\) is generated by
\[
 a\otimes x-1\otimes ax,
 \qquad a\in A,\quad x\in\g,
\]
is an isomorphism of \(A\)-dg-Lie algebroids. In particular no separate basic-generation hypothesis is needed in the smooth affine classification theorem.
\end{lemma}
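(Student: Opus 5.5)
The plan is to prove that \(\Phi_{\widetilde\g}\) is an isomorphism of \(A\)-dg-modules by comparing two short exact sequences with the five lemma. Compatibility with brackets and anchors should then follow from the Lie--Rinehart identity in \(\widetilde\g\).

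First I check that the basic part sits in an exact sequence of \(B\)-dg-modules
\[
 0\longrightarrow\g\longrightarrow\widetilde\g^{\bas}\longrightarrow T_{B/\kk}\longrightarrow0 .
\]
The left map is \(x\mapsto(x,0)\), and its exactness is clear. For surjectivity on the right, take \(\xi\in T_{B/\kk}\) and lift \(1\otimes\xi\in A\otimes_B T_{B/\kk}\) to some \(\widetilde\xi\in\widetilde\g\), using condition (iii) of the definition. Then \((\widetilde\xi,\xi)\in\widetilde\g^{\bas}\).

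Since \(B\) is smooth over \(\kk\), \(T_{B/\kk}\) is a finitely generated projective \(B\)-module concentrated in degree \(0\). The sequence therefore splits \(B\)-linearly in each degree. Tensoring over \(B\) with \(A\) keeps it exact:
\[
 0\to A\otimes_B\g\to A\otimes_B\widetilde\g^{\bas}\to A\otimes_B T_{B/\kk}\to0 .
\]
Note that a \(B\)-linear splitting need not commute with the differential, but this does not matter. Exactness is a degreewise statement, and all three maps are chain maps.

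Next I pass to the quotient by \(\widetilde\cR\). This submodule lies in the image of \(A\otimes_B\g\), and it is exactly the kernel of the multiplication map \(A\otimes_B\g\to\g\), \(a\otimes x\mapsto ax\). Indeed, \(a\otimes x=(a\otimes x-1\otimes ax)+1\otimes ax\), and \(\sum_i 1\otimes a_ix_i=1\otimes\sum_i a_ix_i\). Dividing the middle and left terms by \(\widetilde\cR\) then gives an exact sequence
\[
 0\to\g\to(A\otimes_B\widetilde\g^{\bas})/\widetilde\cR\to A\otimes_B T_{B/\kk}\to0 .
\]
I also check that \(\widetilde\cR\) is stable under the differential and under the \(A\)-action, and that it is killed by \(\Phi_{\widetilde\g}\). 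The last point holds because \(p_1\) is the identity on \(\g\) and \(\widetilde\g\) is an \(A\)-module.

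Now \(\Phi_{\widetilde\g}\) maps this sequence to the defining sequence \(0\to\g\to\widetilde\g\to A\otimes_B T_{B/\kk}\to0\).
\begin{itemize}
\item On \(\g\), the induced map is the identity.
\item On the quotients, it sends \(a\otimes(\widetilde\xi,\xi)\) to the image of \(a\widetilde\xi\), which is \(a\otimes\xi\). So it is again the identity.
\end{itemize}
The five lemma, applied degreewise, shows that \(\Phi_{\widetilde\g}\) is an isomorphism of \(A\)-dg-modules.

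Finally I transport the structure. I define the bracket on the source by the Lie--Rinehart formula
\[
 [a\otimes u,b\otimes v]=ab\otimes[u,v]+a\,\widetilde\rho(u)(b)\otimes v-\pm\, b\,\widetilde\rho(v)(a)\otimes u,
\]
with Koszul signs, and the anchor by \(a\otimes u\mapsto a\widetilde\rho(p_1u)\). Because \(\Phi_{\widetilde\g}\) is already known to be bijective, I do not check well-definedness directly. Instead I verify that \(\Phi_{\widetilde\g}\) intertwines these formulas with the bracket and anchor of \(\widetilde\g\). This is a direct expansion of \([ap_1u,bp_1v]\) by the Lie--Rinehart identity in \(\widetilde\g\). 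It follows that the source structure is well defined and that \(\Phi_{\widetilde\g}\) is an isomorphism of \(A\)-dg-Lie algebroids.

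I expect the main obstacle to be the exactness step, namely the left exactness after applying \(A\otimes_B-\). That is the only place where smoothness enters, through the projectivity of \(T_{B/\kk}\). It is also why no faithful flatness of \(B\to A\) is needed. The remaining work is routine sign bookkeeping in the graded Lie--Rinehart expansion.
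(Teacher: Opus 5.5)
Your proof is correct and is essentially the paper's argument. The five lemma applied to the tensored basic sequence modulo \(\widetilde\cR\) packages the same surjectivity/injectivity chase, and the structure is checked by the same Lie--Rinehart expansion of \([a\,p_1(u),b\,p_1(v)]\); you use the bijectivity of \(\Phi_{\widetilde\g}\) to get well-definedness, where the paper appeals to Lemma~\ref{lem:reconstruction-basic-extension}.

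The one variation is that you secure left exactness through projectivity of \(T_{B/\kk}\) rather than flatness of \(B\to A\). In fact neither is needed: \(x\mapsto[1\otimes(x,0)]\) is injective because \(\Phi_{\widetilde\g}\) retracts it to the identity of \(\g\), and exactness in the middle only uses right exactness of \(A\otimes_B-\).
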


\begin{proof}
Surjectivity follows at once. If \(y\in\widetilde\g\), its image in \(A\otimes_BT_{B/\kk}\) may be written as a finite sum \(\sum_i a_i(1\otimes\xi_i)\). Choose pairs \(\widehat\xi_i=(\widetilde\xi_i,\xi_i)\in\widetilde\g^{\bas}\). Then
\[
 y-\sum_i a_i\widetilde\xi_i\in\g,
\]
so \(y\) lies in the image of \(\Phi_{\widetilde\g}\).
For injectivity, let \(\sum_i a_i\otimes u_i\in A\otimes_B\widetilde\g^{\bas}\) map to zero in \(\widetilde\g\). Its image in \(A\otimes_BT_{B/\kk}\) is zero. Since \(B\to A\) is smooth, hence flat, tensoring the exact sequence
\[
 0\longrightarrow\g\longrightarrow\widetilde\g^{\bas}
 \longrightarrow T_{B/\kk}\longrightarrow0
\]
with \(A\) remains exact. Therefore \(\sum_i a_i\otimes u_i\) is congruent, in \(A\otimes_B\widetilde\g^{\bas}\), to an element of \(A\otimes_B\g\). Its image in \(\widetilde\g\) is then the multiplication image in the already \(A\)-linear module \(\g\). The relations defining \(\widetilde{\cR}\) identify \(A\otimes_B\g\) with \(\g\); equivalently they quotient by the kernel of the multiplication map
\[
 A\otimes_B\g\longrightarrow\g,
 \qquad a\otimes x\longmapsto ax .
\]
Hence the original element is zero modulo \(\widetilde{\cR}\). This proves that \(\Phi_{\widetilde\g}\) is an isomorphism of complexes and of \(A\)-modules.

It remains to verify the Lie-algebroid structure. The bracket and anchor on the source are the ones obtained from the fibre-product basic extension by Lemma~\ref{lem:reconstruction-basic-extension}. Under \(\Phi_{\widetilde\g}\), the element \(a\otimes(\widetilde u,u_B)\) is sent to \(a\widetilde u\); the Lie--Rinehart formula in Lemma~\ref{lem:reconstruction-basic-extension} is the restriction of the bracket and anchor of \(\widetilde\g\) to such elements. Hence \(\Phi_{\widetilde\g}\) preserves brackets, anchors, and differentials. This proves the assertion.
\end{proof}

\begin{lemma}\label{lem:reconstruction-basic-extension}
Let \(\e\) be a \(B\)-dg-Lie algebroid fitting into an exact sequence
\[
 0\longrightarrow \g\longrightarrow\e\longrightarrow T_{B/\kk}\longrightarrow0
\]
and suppose that it is equipped with a morphism
$
 \Theta\from \e\longrightarrow D^1_{\bas}(\g/B)
$
whose restriction to \(\g\) is the inner map \(\iota\) and whose projection to \(T_{B/\kk}\) is the quotient map \(\e\to T_{B/\kk}\). Put
\[
 \sigma_\e:=\sigma\circ\Theta\from \e\longrightarrow T_{A/\kk}.
\]
Then
\[
 \widetilde\g_\e:=(A\otimes_B\e)/\cR,
 \qquad
 \cR=\langle a\otimes x-1\otimes ax\mid a\in A,\quad x\in\g\rangle,
\]
has a structure of dg-Lie algebroid over \(A\), with anchor
$
 \widetilde\rho_\e(a\otimes u)=a\sigma_\e(u),
$
and bracket
\[
 [a\otimes u,b\otimes v]
 =ab\otimes[u,v]
 +a\sigma_\e(u)(b)\otimes v
 -(-1)^{|u||v|}b\sigma_\e(v)(a)\otimes u .
\]
Moreover there is an exact sequence
\[
 0\longrightarrow\g\longrightarrow\widetilde\g_\e
 \longrightarrow A\otimes_B T_{B/\kk}\longrightarrow0,
\]
so \(\widetilde\g_\e\) is a strict transversal unfolding.
\end{lemma}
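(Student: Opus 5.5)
The plan is to check, in order, that the Lie--Rinehart formulas descend to the quotient \((A\otimes_B\e)/\cR\), that they satisfy the dg-Lie algebroid axioms, and then to read off the exact sequence from flatness of \(B\to A\).

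\textbf{Step 1: the quotient is well defined and the formulas descend.} First observe that \(\cR\) is an \(A\)-submodule for the left \(A\)-action on the first factor. The differential \(d(a\otimes u)=d_Aa\otimes u\pm a\otimes d_\e u\) preserves \(\cR\) because \(\g\to\e\) is a chain map. So \(\widetilde\g_\e\) is an \(A\)-dg-module, and the anchor \(a\otimes u\mapsto a\sigma_\e(u)\) is \(B\)-balanced since \(\sigma_\e\) is \(B\)-linear.

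The anchor kills \(\cR\) because \(\sigma_\e|_\g=\sigma\circ\iota=\rho\) is \(A\)-linear. For the bracket, I will pair a generator \(a\otimes x-1\otimes ax\) with \(b\otimes v\) and expand using the displayed formula. The resulting terms must again lie in \(\cR\). This uses three facts:
\begin{enumerate}[label=(\alph*)]
\item the Lie--Rinehart identity in \(\g\), namely \([ax,y]=a[x,y]\mp\rho(y)(a)x\), for the part landing in \(\g\);
\item the first-order condition \(\delta(ax)=\theta(a)x\pm a\delta(x)\) for \(\Theta(v)=(\theta,\delta,\xi)\), together with \([v,x]=\delta x\); the latter holds because \(\Theta\) is a dg-Lie morphism restricting to \(\iota\), so \(\operatorname{ad}_v|_\g=\delta\) (this needs checking against the precise meaning of \(\Theta\) as a Lie morphism into \(D^1_{\bas}\); I take it to mean \(\Theta(v)=(\sigma_\e(v),[v,-]|_\g,\bar v)\), as in the adjoint-symbol construction);
\item \(B\)-bilinearity of the formula, which makes it compatible with the balanced tensor relation \(ab'\otimes u=a\otimes b'u\).
\end{enumerate}
For (c) I will use that \(\sigma_\e(b'u)=b'\sigma_\e(u)\) and that \([u,b'v]=\xi_u(b')v+b'[u,v]\), with \(\xi_u(b')\) matching \(\sigma_\e(u)(b')\) because \(q(\theta)=1\otimes\xi\). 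This basic condition is exactly what makes the base Lie--Rinehart identity agree with the \(A\)-level one.

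\textbf{Step 2: the axioms.} Antisymmetry is visible from the formula. The Lie--Rinehart identity over \(A\) and \(A\)-linearity of the anchor are built into the formula. The anchor is a Lie morphism: \(\widetilde\rho_\e[a\otimes u,b\otimes v]=[a\sigma_\e u,b\sigma_\e v]\) follows because \(\sigma_\e\) is a dg-Lie morphism into \(T_{A/\kk}\), being the composite \(\sigma\circ\Theta\). For Jacobi, I reduce to generators \(1\otimes u\): by the standard lemma for Lie--Rinehart brackets, once the anchor is a Lie morphism and the bracket is the Leibniz extension, the Jacobiator is \(A\)-trilinear. It therefore suffices to check it on \(1\otimes\e\), where it is Jacobi in \(\e\). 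Compatibility with the differential is checked the same way, on generators.

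\textbf{Step 3: the exact sequence.} I tensor \(0\to\g\to\e\to T_{B/\kk}\to0\) with \(A\); this stays exact since \(B\to A\) is smooth, hence flat. The multiplication map \(A\otimes_B\g\to\g\) is surjective, and its kernel is the image of \(\cR\), which lies inside \(A\otimes_B\g\). Quotienting both the sub and the middle term by \(\cR\) then gives \(0\to\g\to\widetilde\g_\e\to A\otimes_BT_{B/\kk}\to0\). The induced anchor on the quotient is the identity because \(q\circ\sigma_\e(u)=1\otimes\bar u\). The ideal property of \(\g\) follows from the bracket formula and Step 1(b), and \(\widetilde\rho|_\g=\rho\) holds by construction.

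The main obstacle is Step 1: the well-definedness of the bracket modulo \(\cR\) with correct Koszul signs, where the first-order condition on \(\Theta\) must exactly cancel the anchor terms. I would first do the ungraded computation and then insert signs.
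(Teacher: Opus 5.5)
Your Steps 1--2 follow the same route as the paper: descend the formulas modulo \(\cR\) using the first-order Leibniz rule of \(\Theta(v)\), then check Leibniz, the anchor and Jacobi, and read off the exact sequence from flatness. Your \(B\)-balancedness check in (c), the reduction of Jacobi to generators via \(A\)-trilinearity of the Jacobiator, and the identification of \(\cR\) with \(\ker(A\otimes_B\g\to\g)\) are correct, and sharper than the paper's argument.

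The step that fails is ``the ideal property of \(\g\) follows from the bracket formula and Step 1(b)''. For \(x\in\g\), \(a\in A\) and \(u\in\e\), the displayed bracket gives
\[
 [1\otimes x,\,a\otimes u]=a\otimes[x,u]+\rho(x)(a)\otimes u .
\]
Its image in \(A\otimes_BT_{B/\kk}\) is \(\rho(x)(a)\otimes\bar u\), which is nonzero in general. For instance, take \(B=\kk[s]\), \(A=\kk[s,t]\), \(\g=A\partial_t\), and let \(\e\) be the basic part of \(T_{A/\kk}\). Then \(\widetilde\g_\e\cong T_{A/\kk}\) and \([\partial_t,t\partial_s]=\partial_s\notin\g\). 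So condition (i) in the definition of a strict transversal unfolding (\(\g\) a dg-Lie ideal), which the statement's final clause asserts, is false for \(\widetilde\g_\e\), and no argument can establish it. The paper's proof does not address it either. What is true, and what you should prove instead, is weaker: \(\g\) is a sub-dg-Lie algebroid, it equals the kernel of the anchor-compatible surjection \(\widetilde\g_\e\to A\otimes_BT_{B/\kk}\), and it is normalised by the basic elements, since \([1\otimes u,1\otimes x]=1\otimes[u,x]\in\g\). This is all the adjoint-symbol construction actually uses.

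There is a second, smaller issue. In (b) you derive \(\operatorname{ad}_v|_\g=\delta\) from \(\Theta\) being a Lie morphism restricting to \(\iota\), but that derivation is only valid in the effective case. In general one only gets \(\iota([v,x])=[\Theta(v),\iota(x)]=\iota(\delta x)\), that is, \([v,x]-\delta x\in\z(\g)\). The bracket descends modulo \(\cR\) exactly when this defect is \(A\)-linear in \(x\), and it need not be. Keep \(B,A\) as above and take \(\g=A\) abelian with zero anchor. Let \(\e=\g\oplus T_{B/\kk}\), with \(f\partial_s\) acting on \(\g\) by \(f(\partial_s+\partial_t)\), and put \(\Theta(x+f\partial_s)=(f\partial_s,\,f\partial_s|_\g,\,f\partial_s)\). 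This satisfies the stated hypotheses on \(\e\) and \(\Theta\), yet \([1\otimes\partial_s,\,t\otimes x-1\otimes tx]\equiv-1\otimes x\notin\cR\). So the reading \(\Theta(v)=(\sigma_\e(v),[v,-]|_\g,\bar v)\) that you adopt is a genuinely necessary extra hypothesis, namely the action compatibility of a map into the crossed module. The paper's proof also uses it implicitly. You were right to flag it, but you should state it as an assumption rather than derive it.
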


\begin{proof}
The anchor is well-defined because the relation \(a\otimes x=1\otimes ax\) is sent to
\[
 a\rho(x)-\rho(ax)=0,
\]
using the \(A\)-linearity of the relative anchor. The bracket is compatible with the same relation because \(\Theta(u)\) is a first-order Lie derivation of \(\g\). Indeed, for \(x\in\g\) and homogeneous \(b\in A\), the equality
\[
 \Theta(u)(bx)=\sigma_\e(u)(b)x+(-1)^{|u||b|}b\Theta(u)(x)
\]
is the statement that replacing \(b\otimes x\) by \(1\otimes bx\) gives the same class modulo \(\cR\). Compatibility with the relation in the other argument follows from graded skew-symmetry. Hence the displayed formula descends to \(\widetilde\g_\e\).
The Leibniz identity over \(A\) follows from the formula:
\[
 [\alpha, f\beta]
 =\widetilde\rho_\e(\alpha)(f)\beta
 +(-1)^{|\alpha||f|}f[\alpha,\beta]
\]
for local homogeneous sections. The anchor is a morphism of brackets because \(\sigma\from D^1_{\bas}(\g/B)\to T_{A/\kk}\) is a dg-Lie morphism and \(\Theta\) is bracket-preserving. The Jacobiator of the bracket on \(A\otimes_B\e\) is the usual semidirect-product Jacobiator. Its terms split into: the Jacobiator in \(\e\), which vanishes; the Jacobiator of vector fields \(\sigma_\e(u)\), which vanishes because \(\sigma_\e\) is a Lie morphism; and the mixed terms, which cancel exactly because every \(\Theta(u)\) acts on \(\g\) by a derivation of the Lie bracket. Thus \(\widetilde\g_\e\) is a dg-Lie algebroid over \(A\).
The quotient by the embedded copy of \(\g\) is obtained by tensoring the quotient \(\e/\g\simeq T_{B/\kk}\) with \(A\), so it is \(A\otimes_BT_{B/\kk}\). The construction is compatible with the automatic reconstruction isomorphism of Lemma~\ref{lem:automatic-basic-reconstruction}.
\end{proof}

\begin{theorem}\label{thm:strict-affine}
Let \(B\to A\) be a smooth morphism between smooth commutative \(\kk\)-algebras, and let
\[
 \rho\from\g\longrightarrow T_{A/B}
\]
be a cofibrant strictly perfect effective relative dg-Lie algebroid. Then the effective unfolding groupoid has no non-trivial stabilisers, and the adjoint-symbol construction induces a bijection
\[
 \left|\Unf^{\tr,\eff}(\g/B)\right|
 \simeq
 \Flat(T_{B/\kk},\uuder(\g/B)).
\]
\end{theorem}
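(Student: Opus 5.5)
The statement has two parts. The absence of stabilisers is exactly Lemma~\ref{lem:effective-discrete}, so it remains to construct a map from isomorphism classes of strict transversal unfoldings to flat splittings and to show that it is bijective.

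\emph{Forward map (adjoint symbol).} Given an unfolding \(\widetilde\g\), I would pass to its basic part \(\widetilde\g^{\bas}\) from Definition~\ref{def:basic-part}. Each element \(u=(\widetilde\xi,\xi)\) defines
\[
 \Theta(u):=\bigl(\widetilde\rho(\widetilde\xi),\operatorname{ad}_{\widetilde\xi}|_\g,\xi\bigr).
\]
This is well defined because \(\g\) is an ideal. The three defining identities of \(D^1_{A/\kk}(\g)\) follow from properties of \(\widetilde\g\):
\begin{enumerate}[label=(\alph*)]
\item the Lie--Rinehart identity in \(\widetilde\g\) gives the first-order Leibniz rule;
\item the Jacobi identity gives the derivation property;
\item the anchor being a bracket morphism gives \(\rho(\delta x)=[\theta,\rho(x)]\).
\end{enumerate}
Basicness \(q(\theta)=1\otimes\xi\) holds by condition (iv) of the definition of an unfolding. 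Jacobi in \(\widetilde\g\) also makes \(\Theta\) a dg-Lie morphism, and it is \(B\)-linear. It restricts to \(\iota\) on \(\g\), so it descends to a map \(T_{B/\kk}\cong\widetilde\g^{\bas}/\g\to\uuder(\g/B)\). This map is \(s\), and it is a dg-Lie algebroid section of \(a_{\uuder}\). The point to stress is that \(s\) does not depend on the choice of lift \(\widetilde\xi\), because two lifts differ by an element of \(\g\), and elements of \(\g\) act by \(\iota\). Flatness is then inherited from the bracket-compatibility of \(\Theta\). An isomorphism of unfoldings that fixes \(\g\) and the quotient intertwines the adjoint actions, so \(s\) depends only on the isomorphism class.

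\emph{Inverse map.} Given a flat splitting \(s\), I would form the pullback
\[
 \e:=D^1_{\bas}(\g/B)\times_{\uuder(\g/B)}T_{B/\kk}.
\]
Because \(\iota\) is injective, this sits in an exact sequence \(0\to\g\to\e\to T_{B/\kk}\to0\). It is a \(B\)-dg-Lie algebroid (by Lemma~\ref{lem:base-LR-controller}) because \(s\) is a morphism of algebroids, and it carries the tautological map \(\Theta\from\e\to D^1_{\bas}(\g/B)\), which restricts to \(\iota\). Lemma~\ref{lem:reconstruction-basic-extension} then produces the unfolding \(\widetilde\g_\e\).

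\emph{Mutual inverses.}
\begin{enumerate}[label=(\arabic*)]
\item Starting from \(\widetilde\g\): the map \((\Theta,\text{proj})\) identifies \(\widetilde\g^{\bas}\) with the pullback \(\e\). This map is injective by effectivity: if \(\Theta(u)=0\) then \(u\in\g\) and \(\iota(u)=0\). It is surjective by a five-lemma argument on the two extensions of \(T_{B/\kk}\) by \(\g\). Lemma~\ref{lem:automatic-basic-reconstruction} then gives \(\widetilde\g\cong\widetilde\g_\e\) over \(\g\) and over the quotient.
\item Starting from \(s\): in \(\widetilde\g_\e\), the adjoint action of the class \(1\otimes u\) on \(\g\) is \(\Theta(u)\). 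This follows from the bracket formula of Lemma~\ref{lem:reconstruction-basic-extension} with \(v\in\g\), using the relation \(\cR\). Hence the recovered splitting is \(s\).
\end{enumerate}

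\emph{Main obstacle.} I expect the delicate step to be (1): showing that the identification \(\widetilde\g^{\bas}\cong\e\) is an isomorphism of \(B\)-dg-Lie algebroids compatible with the differentials, and that the induced isomorphism \(\widetilde\g_\e\cong\widetilde\g\) respects the rigid normalisations. There I would check the bracket on the generators \(1\otimes u\) and \(1\otimes x\) separately, and use flatness of \(B\to A\) as in Lemma~\ref{lem:automatic-basic-reconstruction}.
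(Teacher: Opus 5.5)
Your proposal is correct and follows the paper's proof essentially step for step: the adjoint-symbol map on the basic part $\widetilde\g^{\bas}$; flatness from bracket compatibility modulo $\iota(\g)$; reconstruction via the pullback $\e_s$ and Lemma~\ref{lem:reconstruction-basic-extension}; and the identification $\widetilde\g^{\bas}\cong\e_s$, injective by effectivity, followed by Lemma~\ref{lem:automatic-basic-reconstruction}. The differences are minor: you obtain surjectivity of $\widetilde\g^{\bas}\to\e_s$ from the five lemma rather than by correcting a chosen lift by the unique $x$ with $D-\Theta_{\widehat\xi}=\iota(x)$, and you state explicitly that $s$ is invariant under isomorphisms of unfoldings, which the paper leaves implicit.
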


\begin{proof}
The proof consists of four steps. By Lemma~\ref{lem:effective-discrete}, in the effective case no non-trivial stabiliser remains in the rigid unfolding groupoid, so it is enough to identify isomorphism classes. The construction is the derived analogue of the adjoint-symbol construction for unfoldings of singular Lie algebroids \cite{CMQ}.

\smallskip
\noindent\emph{Step 1: the adjoint-symbol map associated with an unfolding.}
Let
\[
 0\longrightarrow\g\longrightarrow\widetilde\g
 \longrightarrow A\otimes_B T_{B/\kk}\longrightarrow 0
\]
be a strict effective transversal unfolding. Denote by \(\widetilde\g^{\bas}\) the fibre-product basic part of Definition~\ref{def:basic-part}. Thus there is an exact sequence of \(B\)-dg-Lie algebroids
\[
 0\longrightarrow\g\longrightarrow\widetilde\g^{\bas}
 \longrightarrow T_{B/\kk}\longrightarrow0 .
\]
For \(\widehat\xi=(\widetilde\xi,\xi)\in\widetilde\g^{\bas}\), define
\[
 \Theta_{\widehat\xi}:=\bigl(\widetilde\rho(\widetilde\xi),
 \operatorname{ad}_{\widetilde\xi}|_\g\bigr).
\]
We verify that this is an element of \(D^1_{\bas}(\g/B)\). Since \(\g\) is a dg-Lie ideal in \(\widetilde\g\), the operator \(\operatorname{ad}_{\widetilde\xi}|_\g\) preserves \(\g\). For homogeneous \(a\in A\) and \(x\in\g\), the Lie--Rinehart identity in \(\widetilde\g\) gives
\[
 [\widetilde\xi,ax]
 =\widetilde\rho(\widetilde\xi)(a)x
 +(-1)^{|\widetilde\xi||a|}a[\widetilde\xi,x].
\]
Hence \(\operatorname{ad}_{\widetilde\xi}|_\g\) is a first-order differential operator on \(\g\) with symbol \(\widetilde\rho(\widetilde\xi)\). Moreover, the compatibility of the anchor with the bracket gives
\[
 \rho([\widetilde\xi,x])
 =[\widetilde\rho(\widetilde\xi),\rho(x)]
\]
for every \(x\in\g\); therefore the operator is a Lie derivation in the sense defining \(D^1_{A/\kk}(\g)\). If \(\widetilde\xi\) maps to \(\xi\in T_{B/\kk}\), then
\[
 q(\widetilde\rho(\widetilde\xi))=1\otimes\xi.
\]
Thus the symbol is basic and \(\Theta_{\widehat\xi}\in D^1_{\bas}(\g/B)\).
If \(\widehat\xi=(\widetilde\xi,\xi)\) is replaced by \(\widehat\xi+x=(\widetilde\xi+x,\xi)\) with \(x\in\g\), then
\[
 \Theta_{\widehat\xi+x}-\Theta_{\widehat\xi}
 =\bigl(\rho(x),\operatorname{ad}_x,0\bigr)=\iota(x).
\]
Consequently the class of \(\Theta_{\widehat\xi}\) in \(\uuder(\g/B)\) depends only on the image \(\xi\) of \(\widehat\xi\) in \(T_{B/\kk}\). We obtain a well-defined morphism
\[
 s_{\widetilde\g}\from T_{B/\kk}\longrightarrow\uuder(\g/B),
 \qquad
 s_{\widetilde\g}(\xi)=[\Theta_{\widehat\xi}].
\]
By construction its anchor is \(\xi\), so \(a_{\uuder}\circ s_{\widetilde\g}=\id\). The map is \(B\)-linear: if \(b\in B\), then \(\rho(x)(b)=0\) for every relative element \(x\in\g\), and the Lie--Rinehart identity gives
\[
 [b\widetilde\xi,x]=b[\widetilde\xi,x].
\]
Hence \(\Theta_{b\widehat\xi}=b\Theta_{\widehat\xi}\). It is also compatible with the cohomological differentials. Since the quotient \(A\otimes_B T_{B/\kk}\) is concentrated in degree zero, \(d\widetilde\xi\in\g\), and the differential of the first-order derivation \(\Theta_{\widehat\xi}\) is \(\iota(d\widetilde\xi)\). Therefore the class of \(\Theta_{\widehat\xi}\) is closed in the quotient. Thus \(s_{\widetilde\g}\) is a morphism of dg \(B\)-modules over \(T_{B/\kk}\).

\smallskip
\noindent\emph{Step 2: flatness of the splitting.}
Choose basic lifts \(\widehat\xi=(\widetilde\xi,\xi)\) and \(\widehat\eta=(\widetilde\eta,\eta)\) of \(\xi,\eta\in T_{B/\kk}\). Since \(\widetilde\g\to A\otimes_BT_{B/\kk}\) is a morphism of dg-Lie algebras, the element
\[
 [\widetilde\xi,\widetilde\eta]-\widetilde{[\xi,\eta]}
\]
belongs to \(\g\), where \(\widehat{[\xi,\eta]}=(\widetilde{[\xi,\eta]},[\xi,\eta])\) is any basic lift of \([\xi,\eta]\). Applying the adjoint-symbol construction yields
\[
 [\Theta_{\widehat\xi},\Theta_{\widehat\eta}]
 -\Theta_{\widehat{[\xi,\eta]}}
 \in\iota(\g).
\]
After passing to \(\uuder(\g/B)\) this becomes
\[
 [s_{\widetilde\g}(\xi),s_{\widetilde\g}(\eta)]
 =s_{\widetilde\g}([\xi,\eta]).
\]
Thus \(s_{\widetilde\g}\) is a flat splitting.

\smallskip
\noindent\emph{Step 3: reconstruction from a flat splitting.}
Conversely, let
\[
 s\from T_{B/\kk}\longrightarrow\uuder(\g/B)
\]
be a flat \(B\)-linear splitting. Form the pullback dg-Lie algebra
\[
\begin{tikzcd}[column sep=large,row sep=large]
\e_s \arrow[r] \arrow[d] & D^1_{\bas}(\g/B) \arrow[d] \\
T_{B/\kk} \arrow[r,"s"'] & \uuder(\g/B).
\end{tikzcd}
\]
Since \(\iota\) is injective in the effective case, this pullback sits in an exact sequence
\[
 0\longrightarrow\g\longrightarrow\e_s
 \longrightarrow T_{B/\kk}\longrightarrow0.
\]
The composite
\[
 \e_s\longrightarrow D^1_{\bas}(\g/B)
 \xto{\sigma} T_{A/\kk}
\]
will be denoted by \(\sigma_s\). It restricts to \(\rho\) on \(\g\), and its projection to \(A\otimes_BT_{B/\kk}\) is the tautological basic vector field.

It remains to extend \(\e_s\) from \(B\) to \(A\), while identifying the relative ideal \(\g\), which is already \(A\)-linear. Define
\[
 \widetilde\g_s:=(A\otimes_B\e_s)/\cR,
\]
where \(\cR\) is generated by the elements
$
 a\otimes x-1\otimes ax,
 $ with $ a\in A, x\in\g.
$
The inclusion \(\g\hookrightarrow\e_s\) gives an inclusion \(\g\hookrightarrow\widetilde\g_s\), and the quotient is
\[
 \widetilde\g_s/\g\simeq A\otimes_BT_{B/\kk}.
\]
Define the anchor by
\[
 \widetilde\rho_s(a\otimes u)=a\sigma_s(u).
\]
This is well-defined modulo \(\cR\), since for \(x\in\g\) one has \(\sigma_s(x)=\rho(x)\), and \(\rho\) is \(A\)-linear.
We define the bracket as follows. For homogeneous sections put
\[
 [a\otimes u,b\otimes v]
 =ab\otimes[u,v]
 +a\sigma_s(u)(b)\otimes v
 -(-1)^{|u||v|}b\sigma_s(v)(a)\otimes u.
\]
Since \(A\) is concentrated in degree zero in the present affine smooth setting, no further coefficient signs occur. We verify that this formula descends to the quotient by \(\cR\). Replacing \(a\otimes x\) by \(1\otimes ax\) changes the right-hand side by the first-order Leibniz rule for the derivation of \(\g\) encoded by \(u\in\e_s\). Hence the bracket is compatible with the relations. The Jacobi identity follows by expanding the Jacobiator: the terms involving brackets in \(\e_s\) cancel by the Jacobi identity of \(\e_s\), the terms involving two symbols cancel because
\[
 \sigma_s([u,v])=[\sigma_s(u),\sigma_s(v)],
\]
and the mixed terms cancel because each \(u\in\e_s\) acts by a first-order Lie derivation of \(\g\). Therefore \((\widetilde\g_s,\widetilde\rho_s)\) is an absolute dg-Lie algebroid. Its relative inverse image is \(\g\), and its transverse quotient is \(A\otimes_BT_{B/\kk}\); hence it is a strict transversal unfolding.

\smallskip
\noindent\emph{Step 4: inverse constructions.}
Starting from an unfolding \(\widetilde\g\), define
\[
 \Psi\from\widetilde\g^{\bas}\longrightarrow
 T_{B/\kk}\times_{\uuder(\g/B)}D^1_{\bas}(\g/B),
 \qquad
 (\widetilde\xi,\xi)\longmapsto
 (\xi,\Theta_{(\widetilde\xi,\xi)}).
\]
The map is injective: if two basic lifts have the same image under \(\Psi\), their difference is an element \(x\in\g\) with \(\iota(x)=0\), and effectivity gives \(x=0\). It is surjective: given \((\xi,D)\) in the pullback, choose any basic lift \(\widehat\xi=(\widetilde\xi,\xi)\) of \(\xi\). Since \([D]=s_{\widetilde\g}(\xi)\), there is a unique \(x\in\g\) with
\[
 D-\Theta_{\widehat\xi}=\iota(x),
\]
and \(\widehat\xi+x=(\widetilde\xi+x,\xi)\) maps to \((\xi,D)\). Thus \(\Psi\) is an isomorphism. Extending this isomorphism by the \(A\)-linearisation construction recovers \(\widetilde\g\), by Lemma~\ref{lem:automatic-basic-reconstruction}. Conversely, if one starts from a splitting \(s\), the basic part of the reconstructed unfolding is \(\e_s\), and its adjoint-symbol splitting is \(s\). The two constructions are therefore inverse on isomorphism classes. The construction is functorial in morphisms of triples \((B,A,\g)\) which preserve the chosen smooth base map and the relative dg-Lie algebroid, because every step is defined by pullback, commutator, and the universal quotient by inner derivations.
\end{proof}

\subsection{Maurer--Cartan form}

Let
\[
 s_0\from T_{B/\kk}\longrightarrow\uuder(\g/B)
\]
be a flat reference splitting and put \(\kfr=\ker(a_{\uuder})\). The adjoint action of
\(s_0(T_{B/\kk})\) makes \(\kfr\) a dg-module over the Lie algebroid
\(T_{B/\kk}\). The deformation complex is the total Chevalley--Eilenberg dg-Lie algebra
\[
 \mathfrak C_{s_0}
 :=\Tot\!\left(
 C^\bullet_{\mathrm{CE}}(T_{B/\kk};\kfr)
 \right)
 \simeq
 \Tot\!\left(
 \Omega^\bullet_{B/\kk}\otimes_B\kfr
 \right).
\]
If \(\alpha\in\Omega^p_{B/\kk}\otimes_B\kfr^q\), then
\(|\alpha|_{\mathrm{tot}}=p+q\), and we use the total differential
\[
 D_{s_0}\alpha
 =d_{\mathrm{CE},\nabla^{s_0}}\alpha
   +(-1)^p d_{\kfr}\alpha.
\]
The bracket is the wedge extension of the bracket of \(\kfr\), with the standard Koszul sign. Since \(s_0\) is a morphism of dg-Lie algebroids, one has
\(D_{s_0}^2=0\).

A strict splitting differs from \(s_0\) by an element of
\(\Omega^1_{B/\kk}\otimes_B\kfr^0\). More generally, the formal derived neighbourhood of
\(s_0\) is governed by all elements of total degree one in \(\mathfrak C_{s_0}\). For such an element the flatness equation is the Maurer--Cartan equation
\[
 D_{s_0}\alpha+\frac12[\alpha,\alpha]=0.
\]
In particular, for a strict correction
\(\alpha\in\Omega^1_{B/\kk}\otimes_B\kfr^0\), the equation contains both
\[
 d_{\kfr}\alpha=0
 \qquad\text{and}\qquad
 d_{\mathrm{CE},\nabla^{s_0}}\alpha
 +\frac12[\alpha,\alpha]=0.
\]
This is the form of the Maurer--Cartan equation used throughout the deformation and obstruction theory below. Replacing the strict controller by an equivalent
\(L_\infty\)-model gives the usual complete equation
\[
 \ell_1(\alpha)+\frac12\ell_2(\alpha,\alpha)
 +\frac1{3!}\ell_3(\alpha,\alpha,\alpha)+\cdots=0
\]
in the corresponding total convolution \(L_\infty\)-algebra \cite{Hinich01,Pridham10,Nuiten}.

\section{Homotopical meaning of flat splittings}\label{sec:homotopical-controllers}

The strict affine theorem is deliberately concrete. The non-effective and intrinsic assertions belong instead to the homotopy theory of dg-Lie algebroids. We use Nuiten's Quillen-equivalent semi-model structures on dg-Lie algebroids and \(L_\infty\)-algebroids over a characteristic-zero cdga \cite{Nuiten}; their localisation will be denoted by
\[
 \mathrm{LieAlgd}^{\infty}_{B}.
\]
All mapping spaces below are derived mapping spaces in this localisation. A crossed dg-Lie algebroid is regarded, through the standard crossed-module construction, as a two-term \(L_\infty\)-algebroid. This convention distinguishes the invariant mapping space from any chosen strict presentation.

\begin{definition}\label{def:derived-flat-space}
Let \(\mathbb U\to T_{B/\kk}\) be a dg- or \(L_\infty\)-algebroid over the base tangent algebroid. Its derived space of flat splittings is
\[
 \operatorname{Flat}^{\mathrm{der}}(T_{B/\kk},\mathbb U)
 :=
 \Map_{(\mathrm{LieAlgd}^{\infty}_{B})_{/T_{B/\kk}}}
 \bigl(T_{B/\kk},\mathbb U\bigr).
\]
Thus a point is a morphism over \(T_{B/\kk}\), and therefore already includes compatibility with the internal differential, the anchor and all brackets. Higher simplices record homotopies between such morphisms.
\end{definition}

\begin{proposition}\label{prop:strict-flat-pi0}
Let \(\mathfrak u\to T_{B/\kk}\) be a strict effective controller, represented by a cofibrant-fibrant dg-Lie algebroid, and assume that the mapping space from \(T_{B/\kk}\) is connective in the range relevant to strict degree-zero sections. Then
\[
 \pi_0\operatorname{Flat}^{\mathrm{der}}(T_{B/\kk},\mathfrak u)
\]
is the ordinary set of flat dg-Lie-algebroid splittings. Under these hypotheses Theorem~\ref{thm:strict-affine} computes the connected components of the derived splitting space.
\end{proposition}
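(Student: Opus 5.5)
The plan is to compute \(\pi_0\) of the derived mapping space through an explicit model in Nuiten's semi-model structure on dg-Lie algebroids over \(B\), sliced over \(T_{B/\kk}\) \cite{Nuiten}. The source \(T_{B/\kk}\) is a smooth, hence projective, Lie algebroid concentrated in degree zero, but it need not be cofibrant. The first step is therefore to choose a cofibrant replacement \(Q\to T_{B/\kk}\) over \(T_{B/\kk}\). For instance, one can take the free dg-Lie algebroid resolution generated by the projective module \(T_{B/\kk}\) and its bracket relations. Since \(\mathfrak u\) is assumed cofibrant-fibrant, the slice mapping space is computed by the simplicial set
\[
 \Map(Q,\mathfrak u)_\bullet=\operatorname{Hom}_{/T_{B/\kk}}\bigl(Q,\mathfrak u\otimes\Omega^\bullet_{\mathrm{poly}}(\Delta^\bullet)\bigr).
\]
Here \(\mathfrak u\otimes\Omega(\Delta^\bullet)\) is the standard simplicial path object, which is a simplicial resolution because \(\mathfrak u\) is fibrant. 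Consequently \(\pi_0\) is the set of strict morphisms \(Q\to\mathfrak u\) over \(T_{B/\kk}\), modulo the homotopy relation given by \(\Delta^1\)-families.

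The second step is to pass from \(Q\) back to \(T_{B/\kk}\) itself. A strict morphism \(Q\to\mathfrak u\) over \(T_{B/\kk}\) is a Maurer--Cartan-type datum in a convolution algebra. Equivalently, in the sense of the Maurer--Cartan form subsection, it is a degree-zero section \(T_{B/\kk}\to\mathfrak u\) corrected by higher components. These higher components pair the generators of \(Q\) of homological degree \(\geq1\) with elements of \(\mathfrak u\) of negative cohomological degree.

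This is where the connectivity hypothesis on the relevant range is used. It guarantees that these higher components, and the corresponding one-simplices, contribute nothing to \(\pi_0\): every point of the mapping space is homotopic to a strict morphism \(T_{B/\kk}\to\mathfrak u\). I would prove this by obstruction theory along the skeletal filtration of \(Q\). The obstructions to strictifying live in the cohomology groups of \(\mathfrak C_{s_0}\) that the connectivity hypothesis declares zero. In the same way, two strict splittings that are joined by a path are joined by a path whose \(\Delta^1\)-component is constant.

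The third step is to show that distinct strict flat splittings are not identified. Over \(T_{B/\kk}\) the anchor component of a homotopy is fixed, and \(\ker(a)\) is the part of \(\mathfrak u\) in which a homotopy could move. A homotopy between strict splittings \(s_0,s_1\) is given by an element \(s_t+h_t\,dt\). Its \(dt\)-component \(h_t\) is a section \(T_{B/\kk}\to\kfr^{-1}\), and it satisfies \(\partial_t s_t=d_\kfr h_t+\cdots\). Effectivity and the connectivity assumption, which makes \(\kfr^{-1}\) irrelevant in degree zero, together force \(s_0=s_1\). Hence \(\pi_0\) is exactly the set \(\Flat(T_{B/\kk},\mathfrak u)\). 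Taking \(\mathfrak u=\uuder(\g/B)\) and applying Theorem~\ref{thm:strict-affine} then gives the final sentence of the proposition.

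The main obstacle is the second step: making precise the connectivity condition that kills the higher components coming from the cofibrant replacement. My first approach would be to take \(Q\) to be the minimal resolution associated with the Chevalley--Eilenberg complex of \(T_{B/\kk}\). With that choice, the homotopy classes of lifts are computed by the hypercohomology of \(\Tot(\Omega^\bullet_{B/\kk}\otimes_B\kfr)\) in total degrees \(\leq0\), and the hypothesis reads literally as the vanishing of the relevant groups.
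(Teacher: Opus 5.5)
Your proposal is correct to the extent the deliberately vague connectivity hypothesis allows. It follows the same route as the paper: strict points of the mapping space are exactly the flat dg-Lie-algebroid splittings, and effectivity together with the connectivity assumption excludes both extra components and extra identifications in \(\pi_0\). What you add is explicit machinery (the cofibrant replacement \(Q\), the simplicial path object, and the observation that homotopies between strict splittings move only through \(T_{B/\kk}\to\kfr^{-1}\)), which separates strictification from non-identification, two steps the paper compresses into the single claim that no additional positive homotopy contributes to \(\pi_0\).
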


\begin{proof}
A strict point of the derived mapping space is a section of the anchor which is a morphism of complexes and preserves the Lie bracket. These are exactly the flat splittings of Definition~5.1. Effectivity removes the degree-zero stabilisers coming from the inner-action kernel, and the stated connectivity assumption ensures that no additional positive homotopy contributes to \(\pi_0\).
\end{proof}

\section{Non-effective unfoldings and the crossed-module controller}\label{sec:higher}

When the inner action has a kernel, the quotient
\(D^1_{\bas}(\g/B)/\iota(\g)\) loses genuine isotropy. The appropriate replacement is the crossed module
\[
 \Uder(\g/B)
 :=\bigl[\g\xto{\iota}D^1_{\bas}(\g/B)\bigr],
\]
where \(D^1_{\bas}(\g/B)\) acts on \(\g\) through its derivation component. The crossed-module identities are
\[
 \iota(D\cdot x)=[D,\iota(x)],
 \qquad
 \iota(x)\cdot y=[x,y].
\]
The kernel
\[
 \z(\g)=\ker(\iota)
 =\{x\in\g:\rho(x)=0,\ \operatorname{ad}_x=0\}
\]
is the pointwise central isotropy of this crossed module. Crossed modules and their associated two-groupoids provide strict presentations of two-term homotopical quotients \cite{BrownSpencer76,Noohi07}; the dg enhancement is interpreted in \(\mathrm{LieAlgd}^{\infty}_{B}\) by the preceding convention.

\begin{definition}\label{def:strict-lift-groupoid}
The strict flat-lift groupoid
\(\operatorname{Flat}_{\mathrm{str}}(T_{B/\kk},\Uder(\g/B))\)
has as its objects the basic extensions
\[
 0\longrightarrow\g\longrightarrow\e\longrightarrow T_{B/\kk}\longrightarrow0
\]
together with a morphism
\(\Theta\from\e\to D^1_{\bas}(\g/B)\) restricting to \(\iota\), inducing the identity on the base, and preserving the differential and bracket. Morphisms are isomorphisms of such extensions which are the identity on \(\g\) and on \(T_{B/\kk}\) and commute with \(\Theta\).
\end{definition}

\begin{proposition}\label{prop:strict-presentation-crossed}
For a cofibrant strictly perfect crossed presentation, the nerve of
\(\operatorname{Flat}_{\mathrm{str}}(T_{B/\kk},\Uder(\g/B))\)
identifies with the one-truncation of the derived splitting space:
\[
 N\operatorname{Flat}_{\mathrm{str}}(T_{B/\kk},\Uder(\g/B))
 \simeq
 \tau_{\leq1}
 \operatorname{Flat}^{\mathrm{der}}(T_{B/\kk},\Uder(\g/B)).
\]
The full derived mapping space retains the higher homotopies arising from the internal dg structure.
\end{proposition}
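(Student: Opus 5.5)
The plan is to compare both sides through their homotopy groups at each basepoint, after first replacing the crossed module by a model in which mapping spaces can be computed directly. Regard \(\Uder(\g/B)=[\g\xto{\iota}D^1_{\bas}(\g/B)]\) as a two-term \(L_\infty\)-algebroid over \(T_{B/\kk}\). Concretely, use the semidirect product \(D^1_{\bas}(\g/B)\ltimes\g[1]\), with \(\iota\) as the component \(\ell_1\) from \(\g[1]\) to \(D^1_{\bas}(\g/B)\) and with the action on \(\g\) through the derivation component. By the convention of Section~\ref{sec:homotopical-controllers}, this object represents the homotopy quotient.

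Since \(T_{B/\kk}\) is a projective \(B\)-module, strict morphisms from it out of a cofibrant-fibrant model suffice to compute the mapping space. I will describe \(\operatorname{Flat}^{\mathrm{der}}(T_{B/\kk},\Uder(\g/B))\) as the simplicial set of Maurer--Cartan elements in
\[
 \Tot\bigl(\Omega^\bullet_{B/\kk}\otimes_B(D^1_{\bas}(\g/B)\ltimes\g[1])\bigr)\otimes\Omega^\bullet(\Delta^\bullet),
\]
restricted to those whose anchor component is the tautological form. This is the standard Hinich--Getzler description of mapping spaces out of a tangent algebroid into an \(L_\infty\)-algebroid over it, and it extends the Maurer--Cartan form of Section~5.

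Step 1 identifies vertices. A vertex is a pair. Its first component is a \(B\)-linear lift \(\Theta\) of vector fields on \(B\) into \(D^1_{\bas}(\g/B)\). Its second component is a \(2\)-form \(c\in\Omega^2_{B/\kk}\otimes_B\g\) that measures the failure of \(\Theta\) to preserve brackets up to \(\iota(c)\). The Maurer--Cartan equation also makes \(c\) a cocycle and compatible with \(d_\g\). Given such a pair, form \(\e=\g\oplus T_{B/\kk}\) twisted by \(c\), exactly as in Step~3 of Theorem~\ref{thm:strict-affine}; conversely, a choice of splitting \(T_{B/\kk}\to\e\) recovers the pair. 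Together with Lemma~\ref{lem:reconstruction-basic-extension}, this gives an essentially surjective map from the objects of Definition~\ref{def:strict-lift-groupoid} to \(\pi_0\).

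Step 2 identifies gauge equivalences, that is, \(1\)-simplices. Modulo homotopies through \(2\)-simplices, these are given by elements \(h\in\Omega^1_{B/\kk}\otimes_B\g^0\). Such an \(h\) changes \(\Theta\) by \(\iota(h)\) and \(c\) by \(d_{\mathrm{CE}}h+\tfrac12[h,h]\). This is precisely the change of splitting of \(\e\) by \(\xi\mapsto\widetilde\xi+h(\xi)\), which is an isomorphism of extensions commuting with \(\Theta\). I will then check that the homotopy classes of such gauges biject with the morphisms of Definition~\ref{def:strict-lift-groupoid}. An automorphism of a fixed splitting corresponds to an \(h\) with \(\iota(h)=0\) and curvature change zero, namely a CE \(1\)-cocycle with values in the transported isotropy \(\z(\g)\). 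The same computation as in Lemma~\ref{lem:effective-discrete} shows that the corresponding automorphism of \(\e\) is \(\widetilde\xi\mapsto\widetilde\xi+h(\xi)\). This matches \(\pi_1\) at every basepoint.

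Step 3, the main obstacle, is to show that the comparison functor \(N\operatorname{Flat}_{\mathrm{str}}\to\tau_{\le1}\operatorname{Flat}^{\mathrm{der}}\) is well defined and induces these identifications. This requires ruling out two things: Maurer--Cartan vertices of nonzero internal degree, and \(2\)-simplices that identify distinct strict isomorphisms. For the first, I would use that \(T_{B/\kk}\) sits in degree zero and the crossed module is two-term, so the only contributing components are \(\Omega^1\otimes D^0\) and \(\Omega^2\otimes\g^0\); the dg directions then enter only through the cocycle conditions. For the second, I would use that \(2\)-simplices modify \(h\) by a boundary coming from \(\g^{-1}\), which matches the strict morphisms only after imposing the requirement that isomorphisms are chain maps. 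If this matching fails, I would instead truncate \(\g\) to \(\tau^{\ge0}\), which changes neither side in degrees at most \(1\). Whitehead's theorem on \(1\)-types then gives the stated equivalence, and the higher \(\pi_n\), for \(n\ge2\), come from \(\Omega^{\bullet}\otimes\z(\g)\) and from the negative-degree part of \(\g\), as the statement asserts.
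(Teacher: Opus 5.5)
The gap is Step~3, and degree counting cannot close it. Once \(\g\) is not concentrated in degree zero, the two relevant parts of \(\Omega^{\geq1}_{B/\kk}\otimes_B\bigl(D^1_{\bas}(\g/B)\ltimes\g[1]\bigr)\) are larger than you assume. The total-degree-one part is \(\bigoplus_{p\geq1}\Omega^p\otimes\bigl(D^1_{\bas}(\g/B)^{1-p}\oplus\g^{2-p}\bigr)\), and the total-degree-zero part is \(\bigoplus_{p\geq1}\Omega^p\otimes\bigl(D^1_{\bas}(\g/B)^{-p}\oplus\g^{1-p}\bigr)\). You keep only \(\Omega^1\otimes D^1_{\bas}(\g/B)^0\), \(\Omega^2\otimes\g^0\) and \(\Omega^1\otimes\g^0\), and the discarded terms change \(\pi_0\) and \(\pi_1\).

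For example, take \(A=B=\kk[t]\) and the abelian \(\g=(Ay\xto{d}Ax)\), \(dy=x\), with zero anchor, so that \(\iota=0\). Consider the strict lifts \(\Theta_\alpha(\partial_t)=(\partial_t,\partial_t+\alpha\cdot\id_{\g},\partial_t)\), \(\alpha\in B\), where \(\partial_t\) acts on coefficients in the basis \(x,y\). They are pairwise non-isomorphic in \(\operatorname{Flat}_{\mathrm{str}}\), because a strict isomorphism changes the lift only by \(\iota(c)=0\). However, \(\delta_\gamma\in D^1_{\bas}(\g/B)^{-1}\), \(x\mapsto\gamma y\), satisfies \(d\delta_\gamma=\gamma\cdot\id_{\g}\), so the gauge \(dt\otimes\delta_\gamma\) joins \(\Theta_\alpha\) to \(\Theta_{\alpha\pm\gamma}\). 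Likewise, \(c=dt\otimes qx\) is a central strict automorphism and a chain map, yet it is the twisted coboundary of \(dt\otimes qy\) and dies in \(\pi_1\). So requiring isomorphisms to be chain maps does not restore the matching. Here \(\g\) is acyclic, so \(\Uder(\g/B)\simeq T_{B/\kk}\) and \(\operatorname{Flat}^{\mathrm{der}}\) is contractible, whereas the strict groupoid has \(\pi_0\cong\pi_1\cong B\).

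This is not an artefact of non-cofibrancy. Take the free Lie algebra over \(A=B\) on one generator \(y\) of degree \(-1\), i.e.\ \(\g=Ay\oplus Az\) with \(z=[y,y]\) and zero anchor. Then \(\iota\) maps \(\g^{-1}\) isomorphically onto \(D^1_{\bas}(\g/B)^{-1}\) and kills \(z\). The twisted complex at the trivial lift therefore splits as an acyclic cone, a summand \(\Omega^{\geq1}\otimes D^1_{\bas}(\g/B)^0\) in positive total degrees, and \(\Omega^{\geq1}_{B/\kk}\otimes Az\) shifted by three. Hence \(\pi_1\cong H^3_{\mathrm{dR}}(B)\) at that lift, which is nonzero for \(B=\kk[t_1^{\pm1},t_2^{\pm1},t_3^{\pm1}]\). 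Strict automorphisms, by contrast, are trivial because \(\g^0=0\).

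Your truncation fallback fails on both examples. In each case \(\tau^{\geq0}\g=0\); this collapses the strict groupoid in the first example and kills the derived \(\pi_1\) in the second.

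What your Steps~1--2 do establish is the case where \(\g\), and hence \(D^1_{\bas}(\g/B)\), is concentrated in degree zero. This grants, as you and the paper both do, that the Maurer--Cartan model computes the mapping space in Nuiten's localisation. In that case the algebra is nilpotent and lives in non-negative total degrees, and its simplicial Maurer--Cartan set is the nerve of its Deligne groupoid. Your identification of vertices with twisted extensions, and of gauges with changes of splitting with stabilisers \(Z^1_{\mathrm{CE}}(T_{B/\kk};\z(\g)_s)\), is then correct.

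That is a genuinely different and more explicit route than the paper's. The paper regards the crossed module as a strict two-groupoid and quotes the Brown--Spencer/Noohi description of maps into it by extensions and of transformations by isomorphisms. It then asserts that cofibrant--fibrant replacement in Nuiten's localisation makes the strict nerve compute \(\tau_{\leq1}\). That last assertion is precisely your Step~3, and the paper's proof does not supply it either. The examples show that the internal dg structure already affects \(\pi_0\) and \(\pi_1\), not only higher homotopy. The dg case therefore needs either an extra hypothesis, such as \(\g\) concentrated in degree zero, or a strict groupoid enlarged by the components in \(\Omega^p\otimes D^1_{\bas}(\g/B)^{<0}\) and \(\Omega^p\otimes\g^{<0}\).
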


\begin{proof}
A crossed module presents a strict two-groupoid. Morphisms from the ordinary base algebroid to this presentation correspond to the extensions displayed in Definition~\ref{def:strict-lift-groupoid}, and natural transformations correspond precisely to their isomorphisms. This is the customary description of the mapping groupoid for a crossed module \cite{BrownSpencer76,Noohi07}. Passing to the dg setting and replacing source and target by admissible cofibrant-fibrant models computes the derived mapping space in Nuiten's localisation \cite{Nuiten}. The nerve of the strict mapping groupoid therefore computes its one-truncation. The higher homotopies are retained by the untruncated mapping space on the right.
\end{proof}

\begin{proposition}\label{prop:central-stabiliser-complex}
Let \(s\) be a strict flat lift and let \(\z(\g)_s\) denote the central isotropy with the action induced by \(s\). An automorphism of the corresponding strict extension is given by a degree-zero \(B\)-linear map
\[
 c\from T_{B/\kk}\longrightarrow\z(\g)
\]
which is compatible with the internal differential and satisfies
\[
 c([\xi,\eta])
 =\nabla^s_\xi c(\eta)-\nabla^s_\eta c(\xi).
\]
In particular, when the relevant complexes are concentrated in degree zero,
\[
 \operatorname{Aut}(s)
 \cong
 Z^1_{\mathrm{CE}}
 \bigl(T_{B/\kk};\z(\g)_s\bigr).
\]
In the general dg case the based loop space of
\(\operatorname{Flat}^{\mathrm{der}}\) is governed by the total complex
\[
 \Tot C^\bullet_{\mathrm{CE}}
 \bigl(T_{B/\kk};\z(\g)_s\bigr),
\]
with the customary shift determined by the two-term convention.
\end{proposition}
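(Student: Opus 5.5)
The plan is to imitate the proof of Lemma~\ref{lem:effective-discrete}, which ran the same computation without central isotropy. Let \(s\) be a strict flat lift, presented by an object \((\e,\Theta)\) of \(\operatorname{Flat}_{\mathrm{str}}(T_{B/\kk},\Uder(\g/B))\), and let \(\varphi\) be an automorphism of it. By Definition~\ref{def:strict-lift-groupoid}, \(\varphi\) is the identity on \(\g\), induces the identity on \(T_{B/\kk}\), and commutes with \(\Theta\).

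\textbf{Step 1: defining \(c\).} For \(u\in\e\) with image \(\xi\in T_{B/\kk}\), set \(c(u):=\varphi(u)-u\in\g\). Since \(\varphi\) is the identity on \(\g\), \(c\) vanishes on \(\g\). It therefore factors through a map \(c\from T_{B/\kk}\to\g\), which is \(B\)-linear and of degree zero because \(\varphi\) is a degree-zero \(B\)-linear map of extensions.

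\textbf{Step 2: \(c\) lands in \(\z(\g)\).} Compatibility with \(\Theta\) gives \(\Theta(u+c(\xi))=\Theta(u)\). Since \(\Theta\) restricts to \(\iota\), this says \(\iota(c(\xi))=0\). So \(c(\xi)\in\ker\iota=\z(\g)\). This is the same computation as in Lemma~\ref{lem:effective-discrete}, but without concluding \(c=0\). Compatibility with the internal differential follows from \(d\varphi=\varphi d\) together with \(d(\g)\subset\g\).

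\textbf{Step 3: the cocycle identity.} Apply \(\varphi\) to \([u,v]\) and expand:
\[
 [u+c(\xi),v+c(\eta)]=[u,v]+[u,c(\eta)]-(-1)^{|u||v|}[v,c(\xi)]+[c(\xi),c(\eta)] .
\]
The last term vanishes because \(\operatorname{ad}\) is zero on \(\z(\g)\). The action of \(u\) on \(\z(\g)\) is the derivation component of \(\Theta(u)\), and modulo inner derivations this depends only on \(\xi\). The inner derivations act trivially on \(\z(\g)\) because it is central. So \([u,c(\eta)]=\nabla^s_\xi c(\eta)\), and \(\z(\g)\) is stable under this action, since basic derivations preserve the conditions \(\rho(x)=0\) and \(\operatorname{ad}_x=0\) by the ideal computation of the lemma on \(\iota\). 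Comparing with \(\varphi([u,v])=[u,v]+c([\xi,\eta])\) gives the stated identity.

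\textbf{Step 4: the converse.} Given such a \(c\), define \(\varphi(u)=u+c(\bar u)\). Reversing Steps 2 and 3 shows \(\varphi\) preserves bracket, differential and \(\Theta\), and it is invertible with inverse built from \(-c\). Composition of automorphisms corresponds to addition of the maps \(c\), since \(c\) vanishes on \(\g\). This gives the group isomorphism with \(Z^1_{\mathrm{CE}}(T_{B/\kk};\z(\g)_s)\) when everything is concentrated in degree zero. Indeed, \(B\)-linearity together with the cocycle identity is exactly a Chevalley--Eilenberg \(1\)-cocycle for the Lie--Rinehart algebra \(T_{B/\kk}\).

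\textbf{Step 5: the dg case (main obstacle).} Via Proposition~\ref{prop:strict-presentation-crossed}, \(\Omega_s\operatorname{Flat}^{\mathrm{der}}\) is a mapping space into the fibre of \(\Uder\) over \(s\). That fibre is \(\z(\g)[1]\) with the \(s\)-twisted action, as the homotopy kernel of the two-term crossed complex. Computing \(\Map(T_{B/\kk},-)\) into an abelian \(L_\infty\)-module by the standard representability of algebroid cohomology (Nuiten) yields \(\Tot C^\bullet_{\mathrm{CE}}(T_{B/\kk};\z(\g)_s)\), shifted by one. The delicate point is keeping track of the shift convention and the cofibrancy of \(T_{B/\kk}\) relative to \(B\). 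I would justify the cofibrancy from smoothness of \(B\), so that the strict CE complex computes the derived one.
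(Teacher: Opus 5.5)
Your Steps 1--4 are the paper's argument: write the automorphism as \(u\mapsto u+c(\bar u)\), read off the chain condition from \(d\) and the cocycle identity from the bracket, with \([c(\xi),c(\eta)]\) vanishing by centrality. They are in fact more complete than the paper's proof, which simply asserts that \(c\) is \(\z(\g)\)-valued and leaves the converse and the group law implicit; you derive \(\iota(c(\xi))=0\) from \(\Theta\circ\varphi=\Theta\) and check that composition is addition of cocycles.

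For the dg clause the paper says only that it is ``the derived form of the same calculation'', so your Step 5 is at the same level of rigour. Note, however, that the homotopy fibre of \(\Uder(\g/B)\to T_{B/\kk}\) is \(\bigl[\g\to\ker(D^1_{\bas}(\g/B)\to T_{B/\kk})\bigr]\). Its cohomology contains the vertical quotient \(\kfr\) in addition to \(\z(\g)\). Replacing this fibre by \(\z(\g)[1]\) therefore describes only the central-isotropy part of \(\Omega_s\operatorname{Flat}^{\mathrm{der}}\).
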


\begin{proof}
Choose a splitting notation for the extension and write its elements as
\(x+s(\xi)\). An automorphism which is the identity on the kernel and quotient must have the form
\[
 \Phi_c(x+s(\xi))=x+s(\xi)+c(\xi).
\]
Compatibility with the differential gives the chain condition on \(c\). Preservation of the bracket gives
\[
 c([\xi,\eta])
 =\nabla^s_\xi c(\eta)-\nabla^s_\eta c(\xi)+[c(\xi),c(\eta)].
\]
Since \(c\) takes values in the central isotropy, the last term vanishes. This is the Chevalley--Eilenberg cocycle equation. The final assertion is the derived form of the same calculation.
\end{proof}

\begin{theorem}\label{thm:higher}
Let \(B\to A\) be smooth and let
\(\g\to T_{A/B}\) be a cofibrant strictly perfect relative dg-Lie algebroid. The adjoint-symbol construction and scalar-extension reconstruction induce an equivalence of groupoids
\[
 \operatorname{Unf}^{\tr}_{\mathrm{str}}(\g/B)
 \simeq
 \operatorname{Flat}_{\mathrm{str}}
 \bigl(T_{B/\kk},\Uder(\g/B)\bigr).
\]
Consequently,
\[
 N\operatorname{Unf}^{\tr}_{\mathrm{str}}(\g/B)
 \simeq
 \tau_{\leq1}
 \operatorname{Flat}^{\mathrm{der}}
 \bigl(T_{B/\kk},\Uder(\g/B)\bigr).
\]
We define the full derived space of transversal unfoldings by the untruncated mapping space on the right. If \(\iota\) is injective, the central isotropy vanishes and the statement recovers Theorem~\ref{thm:strict-affine} on connected components.
\end{theorem}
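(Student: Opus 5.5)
We construct mutually inverse functors between the two groupoids, following the four steps of Theorem~\ref{thm:strict-affine} but keeping track of morphisms, since effectivity is no longer available. The second assertion is then Proposition~\ref{prop:strict-presentation-crossed}. The last sentence follows from Lemma~\ref{lem:effective-discrete} and Proposition~\ref{prop:strict-flat-pi0}.

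\emph{Forward functor.} Let \(\widetilde\g\) be a strict unfolding, and let \(\widetilde\g^{\bas}\) be its basic part from Definition~\ref{def:basic-part}. This gives an exact sequence \(0\to\g\to\widetilde\g^{\bas}\to T_{B/\kk}\to0\). Define \(\Theta\from\widetilde\g^{\bas}\to D^1_{\bas}(\g/B)\) by \((\widetilde\xi,\xi)\mapsto(\widetilde\rho(\widetilde\xi),\operatorname{ad}_{\widetilde\xi}|_\g,\xi)\).

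Step~1 of the proof of Theorem~\ref{thm:strict-affine} already checks that \(\Theta\) lands in basic derivations and is \(B\)-linear. That verification used no effectivity. Preservation of brackets and differentials follows from the Jacobi identity in \(\widetilde\g\) and from \(\g\) being a dg-ideal. The restriction of \(\Theta\) to \(\g\) is \(\iota\) by definition. Hence \((\widetilde\g^{\bas},\Theta)\) is an object of \(\operatorname{Flat}_{\mathrm{str}}\).

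A morphism \(\varphi\) of unfoldings is the identity on \(\g\) and on \(A\otimes_BT_{B/\kk}\). It therefore restricts to an isomorphism of basic parts over the identity of \(T_{B/\kk}\). It commutes with \(\Theta\) because \(\varphi\) preserves anchors and \(\operatorname{ad}_{\varphi(\widetilde\xi)}|_\g=\varphi\circ\operatorname{ad}_{\widetilde\xi}\circ\varphi^{-1}|_\g=\operatorname{ad}_{\widetilde\xi}|_\g\).

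\emph{Backward functor.} Given an object \((\e,\Theta)\), Lemma~\ref{lem:reconstruction-basic-extension} produces the unfolding \(\widetilde\g_\e=(A\otimes_B\e)/\cR\). An isomorphism \(\e\to\e'\) compatible with \(\Theta\) and with the identities on \(\g\) and \(T_{B/\kk}\) extends \(A\)-linearly. It preserves \(\cR\), and it preserves the brackets because the bracket formula involves only \(\sigma\circ\Theta\).

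\emph{Natural isomorphisms.} Lemma~\ref{lem:automatic-basic-reconstruction} supplies \(\Phi_{\widetilde\g}\from\widetilde\g_{\widetilde\g^{\bas}}\xrightarrow{\sim}\widetilde\g\), which is visibly natural in \(\widetilde\g\). In the other direction, we need the natural map \(\e\to(\widetilde\g_\e)^{\bas}\), \(u\mapsto(1\otimes u,\bar u)\), to be an isomorphism compatible with \(\Theta\). This map sits between two extensions of \(T_{B/\kk}\) by \(\g\) and induces the identity on both ends, so the five lemma applies. Compatibility with \(\Theta\) holds because the bracket of \(1\otimes u\) with \(x\in\g\) in \(\widetilde\g_\e\) is \(1\otimes[u,x]=\Theta(u)(x)\), and its anchor is \(\sigma_\e(u)\).

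The key difference from Theorem~\ref{thm:strict-affine} is that we never pass to \(\uuder\). No kernel of \(\iota\) is quotiented out, so full faithfulness on automorphisms is preserved. This is consistent with Proposition~\ref{prop:central-stabiliser-complex}: automorphisms on both sides are given by the same maps \(c\from T_{B/\kk}\to\z(\g)\).

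The main obstacle is the exactness \(0\to\g\to\widetilde\g_\e\to A\otimes_BT_{B/\kk}\to0\), specifically the injectivity of \(\g\to(A\otimes_B\e)/\cR\). We would prove it using flatness of \(B\to A\), as in Lemma~\ref{lem:automatic-basic-reconstruction}. Tensoring with \(A\) gives \(0\to A\otimes_B\g\to A\otimes_B\e\to A\otimes_BT_{B/\kk}\to0\). Since \(\cR\) lies inside \(A\otimes_B\g\), quotienting by \(\cR\) turns \(A\otimes_B\g\) into \(\g\) via multiplication. A secondary concern is the truncation statement. Here we rely entirely on Proposition~\ref{prop:strict-presentation-crossed}, and check only that its description of strict maps into the crossed module matches Definition~\ref{def:strict-lift-groupoid} object by object.
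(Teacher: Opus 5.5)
Your architecture is the paper's. You use the basic part and adjoint symbol in one direction and Lemma~\ref{lem:reconstruction-basic-extension} in the other. Lemma~\ref{lem:automatic-basic-reconstruction} together with the comparison \(\e\to(\widetilde\g_\e)^{\bas}\) gives the inverse isomorphisms, and Proposition~\ref{prop:strict-presentation-crossed} gives the truncation. Your treatment of morphisms, of naturality, and of the injectivity of \(\g\to(A\otimes_B\e)/\cR\) via flatness is more explicit than the paper's and is fine.

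\textbf{The gap.} What does not go through as written is the identity \([u,x]_\e=\Theta(u)\cdot x\) (your \(\Theta(u)(x)\)) for \(u\in\e\), \(x\in\g\). You assert it when checking that \(\e\to(\widetilde\g_\e)^{\bas}\) commutes with \(\Theta\). Lemma~\ref{lem:reconstruction-basic-extension} also needs it tacitly: its bracket descends modulo \(\cR\) only if \(\operatorname{ad}_u|_\g\) obeys the Leibniz rule with symbol \(\sigma_\e(u)\), and the lemma deduces this from the Leibniz rule for \(\Theta(u)\). Definition~\ref{def:strict-lift-groupoid} does not force the identity. Since \([u,x]_\e\in\g\), bracket preservation and \(\Theta|_\g=\iota\) only give
\[
 \iota\bigl([u,x]_\e\bigr)=\Theta\bigl([u,x]_\e\bigr)=[\Theta(u),\iota(x)]=\iota\bigl(\Theta(u)\cdot x\bigr),
\]
that is, equality modulo \(\ker\iota=\z(\g)\). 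For injective \(\iota\) this suffices, which is why the point is invisible in Theorem~\ref{thm:strict-affine}. In the non-effective case it is exactly where central isotropy affects objects and not only automorphisms. It is this, rather than the exactness you single out, that is the real obstacle.

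\textbf{A counterexample to the step.} Take \(B=\kk[t]\), \(A=\kk[t,w]\), and \(\g=Az\) abelian with zero anchor (the free relative algebroid on one degree-zero generator), so \(\iota=0\).
\begin{enumerate}
\item Let \(\e=\g\oplus T_{B/\kk}\) as \(B\)-modules, with \(\g\) abelian, the usual bracket on \(T_{B/\kk}\), and \([f\partial_t,gz]=f(\partial_tg+\partial_wg)z\) for \(f\in B\), \(g\in A\).
\item Let \(\Theta|_\g=0\) and \(\Theta(f\partial_t)=(f\partial_t,\;gz\mapsto f\,\partial_t(g)\,z,\;f\partial_t)\).
\end{enumerate}
This satisfies every clause of Definition~\ref{def:strict-lift-groupoid}. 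But the formula of Lemma~\ref{lem:reconstruction-basic-extension} gives \([1\otimes\partial_t,w\otimes z]=0\) and \([1\otimes\partial_t,1\otimes wz]=1\otimes z\neq0\), although \(w\otimes z\equiv1\otimes wz\) modulo \(\cR\). So the backward functor is not defined on this object.

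It is not in the essential image of your forward functor either. Every \((\widetilde\g^{\bas},\Theta)\) that functor produces satisfies \([u,x]=\Theta(u)\cdot x\) tautologically, and this property is preserved by isomorphisms fixing \(\g\).

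\textbf{The repair.} Put the equivariance axiom \([u,x]_\e=\Theta(u)\cdot x\) into the objects of \(\operatorname{Flat}_{\mathrm{str}}\). This is the standard butterfly condition for a map into a crossed module, which is what Proposition~\ref{prop:strict-presentation-crossed} implicitly invokes. With it, your argument is complete. The paper's proof passes over the same point when it asserts that the adjoint-symbol morphism of a reconstructed extension is the original \(\Theta\).

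\textbf{A smaller point.} For the last sentence of the theorem, the direct strict route is cleaner than Proposition~\ref{prop:strict-flat-pi0}, whose extra hypotheses you do not need. When \(\iota\) is injective, equivariance is automatic. Then \((\e,\Theta)\) corresponds to the induced flat splitting \(T_{B/\kk}\cong\e/\g\to\uuder(\g/B)\), with inverse \(s\mapsto\e_s\) as in Steps 3--4 of Theorem~\ref{thm:strict-affine}.
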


\begin{proof}
Let \(\widetilde\g\) be a strict unfolding and let
\(\widetilde\g^{\bas}\) be its basic part. A basic lift
\(\widehat\xi=(\widetilde\xi,\xi)\) determines
\[
 \Theta_{\widehat\xi}
 =\bigl(\widetilde\rho(\widetilde\xi),
        \operatorname{ad}_{\widetilde\xi}|_\g,\xi\bigr)
 \in D^1_{\bas}(\g/B).
\]
The Lie--Rinehart identity makes this a first-order Lie derivation, and changing the lift by \(x\in\g\) changes it by \(\iota(x)\). The resulting map
\(\widetilde\g^{\bas}\to D^1_{\bas}(\g/B)\) restricts to \(\iota\), induces the identity on the base and preserves the differential and bracket. Hence it is an object of the strict flat-lift groupoid.

Conversely, an object
\((\e,\Theta)\) of that groupoid satisfies the hypotheses of Lemma~\ref{lem:reconstruction-basic-extension}. The scalar-extension construction
\[
 \widetilde\g_{\e}:=(A\otimes_B\e)/\cR
\]
therefore produces a strict transversal unfolding. The two constructions are inverse because Lemma~\ref{lem:automatic-basic-reconstruction} recovers an unfolding from its basic part, while the adjoint-symbol morphism of a reconstructed extension is the original \(\Theta\).

The same formulas identify morphisms. In particular, their stabilisers are the cocycles described in Proposition~\ref{prop:central-stabiliser-complex}, rather than isolated elements of \(\ker\iota\). The comparison with the one-truncation of the derived mapping space is Proposition~\ref{prop:strict-presentation-crossed}.
\end{proof}

\section{The intrinsic graded-mixed controller}\label{sec:intrinsic}

We now formulate the affine controller intrinsically in terms of the graded-mixed de Rham algebra of a relative derived foliation, following the intrinsic definition of derived foliations in \cite[Section~2.1]{TV}. This formulation requires no chosen Chevalley--Eilenberg presentation.

Let \(\pi\from X\to S\) be a smooth morphism of derived Artin stacks over \(\kk\), and let \(\cF/S\) be a perfect relative derived foliation in the sense of To\"en--Vezzosi \cite[Section~2.1]{TV}. Thus \(\DR(\cF/S)\) is a graded mixed cdga under \(\DR(X/S)\), locally quasi-free on its weight-one piece, and the tangent complex \(\bbT_{\cF/S}\) is dual to the weight-one cotangent complex.

\begin{definition}\label{def:basic-gm-der}
Let
\[
 R:=\DR(\cF/S).
\]
The dg-Lie algebroid of weight-zero graded-mixed derivations of \(R\) is denoted
\(\Der^{\gm}(R)\), and its symbol is restriction to the weight-zero algebra:
\[
 \sigma\from\Der^{\gm}(R)\longrightarrow\bbT_X.
\]
Restriction of a symbol to functions pulled back from the base gives a morphism of complexes
\[
 r_S\circ\sigma\from
 \Der^{\gm}(R)
 \longrightarrow
 \mathbb R\Der_{\kk}
 \bigl(\pi^{-1}\cO_S,\cO_X\bigr).
\]
The natural action of a base vector field on pulled-back functions gives
\[
 \pi^{-1}\bbT_S
 \longrightarrow
 \mathbb R\Der_{\kk}
 \bigl(\pi^{-1}\cO_S,\cO_X\bigr).
\]
We define the complex of basic graded-mixed derivations by the homotopy pullback
\[
 \Der^{\gm}_{\bas}(R)
 :=
 \Der^{\gm}(R)
 \times^h_{\mathbb R\Der_{\kk}(\pi^{-1}\cO_S,\cO_X)}
 \pi^{-1}\bbT_S.
\]
Thus a strict section is a projectable pair \((\Theta,\xi)\) satisfying
\[
 \sigma(\Theta)(\pi^{-1}f)=\pi^{-1}(\xi f)
 \qquad
 \text{for every local }f\in\cO_S.
\]
The comparison complex
\(\mathbb R\Der_{\kk}(\pi^{-1}\cO_S,\cO_X)\) is used solely to record symbols. The Lie bracket on basic derivations is defined on projectable pairs by
\[
 [ (\Theta,\xi),(\Theta',\eta) ]
 :=([\Theta,\Theta'],[\xi,\eta]).
\]
This is well-defined because the commutator of projectable derivations is projectable. In particular, no Lie bracket on the full \(\cO_X\)-module \(\pi^*\bbT_S\) is required, and no splitting of the tangent triangle is chosen.
\end{definition}

\begin{construction}\label{cons:inner-action}
After replacing \(\DR(\cF/S)\) by a cofibrant quasi-free graded mixed model, the tangent dg-Lie algebroid \(\bbT_{\cF/S}\) acts by contraction and Lie derivative; the resulting action is independent of the chosen model in the homotopy category. Consequently there is an induced morphism of dg-Lie algebroids
\[
 \iota_{\cF/S}\from \bbT_{\cF/S}\longrightarrow \Der^{\gm}_{\bas}(\DR(\cF/S)).
\]
Its symbol is the relative anchor \(\bbT_{\cF/S}\to \bbT_X\). Since this anchor factors through the fibre of \(q_X\from\bbT_X\to\pi^*\bbT_S\), its image in \(\pi^*\bbT_S\) is zero. Thus the derivation is basic, with base component equal to the zero vector field on \(S\).
\end{construction}

\begin{definition}\label{def:intrinsic-controller}
The intrinsic derived transverse controller of the relative foliation \(\cF/S\) is the homotopy quotient
\[
 \Uder(\cF/S):=
 \bigl[\bbT_{\cF/S}\xto{\iota_{\cF/S}}\Der^{\gm}_{\bas}(\DR(\cF/S))\bigr].
\]
Its invariant effective shadow is the homotopy cofibre
\[
 \uuder^{h}(\cF/S):=
 \hocofib\bigl(
 \bbT_{\cF/S}\to
 \Der^{\gm}_{\bas}(\DR(\cF/S))
 \bigr).
\]
If the inner map is represented by a cofibration which is a degreewise monomorphism, the ordinary quotient is a strict model of this homotopy cofibre.
\end{definition}

\begin{proposition}\label{prop:intrinsic-comparison}
Let \(B\to A\) be a smooth morphism between smooth commutative \(\kk\)-algebras, and let \(\cF_{A/B}\) be presented by a cofibrant strictly perfect relative dg-Lie algebroid
\[
 \rho\from\g\longrightarrow T_{A/B}
\]
through an equivalence of graded mixed cdga's
\[
 \DR(\cF_{A/B})\simeq\CE^*(\g).
\]
Then the dg-Lie algebra of basic weight-zero graded-mixed derivations of \(\CE^*(\g)\) identifies with \(D^1_{\bas}(\g/B)\). Under this identification the intrinsic inner action of \(\bbT_{\cF_{A/B}/B}\) is the map \(\iota\from\g\to D^1_{\bas}(\g/B)\). Hence the intrinsic controller is represented by the crossed module \([\g\to D^1_{\bas}(\g/B)]\), and by the ordinary quotient \(\uuder(\g/B)\) in the effective case.
\end{proposition}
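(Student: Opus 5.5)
The plan is to compute weight-zero graded-mixed derivations of \(\CE^*(\g)=\Sym_A(\g^\vee[1])\) directly, using that it is quasi-free on its weight-zero and weight-one generators. Since \(\g\) is cofibrant and strictly perfect, \(\g^\vee\) is a finite projective dg \(A\)-module and \(\g^{\vee\vee}\cong\g\). A weight-zero derivation \(\Theta\) of the underlying graded cdga is therefore determined by its restrictions
\[
 \Theta|_A\from A\to A,\qquad \Theta|_{\g^\vee[1]}\from\g^\vee[1]\to\g^\vee[1].
\]
Weight zero forces both restrictions to preserve weight. The first is a vector field \(\theta\in T_{A/\kk}\). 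The second is a first-order operator \(\delta^\vee\) on \(\g^\vee\) with symbol \(\theta\). Dualising \(\delta^\vee\) by \(\langle\delta x,\alpha\rangle:=\theta\langle x,\alpha\rangle-(-1)^{\cdots}\langle x,\delta^\vee\alpha\rangle\) produces a first-order operator \(\delta\) on \(\g\) satisfying the Leibniz rule \(\delta(ax)=\theta(a)x\pm a\delta x\). This sets up a bijection between weight-zero graded derivations and such pairs \((\theta,\delta)\). Commutation with the internal differential then corresponds to the differential \(([d_A,\theta],[d_\g,\delta])\) on pairs, as in the definition of \(D^1_{A/\kk}(\g)\).

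The key step is to impose \([\Theta,\epsilon_{CE}]=0\). Since both sides are derivations of weight one, it suffices to test on generators \(a\in A\) and \(\alpha\in\g^\vee\). On \(a\), where \(\epsilon_{CE}a=\rho^\vee(da)\), we get the identity \(\langle\rho(\delta x),da\rangle=[\theta,\rho(x)](a)\) for all \(x\), which is anchor compatibility \(\rho(\delta x)=[\theta,\rho(x)]\). On \(\alpha\), we evaluate the displayed formula for \(\epsilon_{CE}\alpha(x,y)\). The anchor terms cancel using the anchor condition just obtained, and what remains is \(\alpha(\delta[x,y]-[\delta x,y]\mp[x,\delta y])=0\) for all \(\alpha\). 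Because \(\g\) is finite projective, this gives the bracket-derivation identity. Running the argument backwards shows that every element of \(D^1_{A/\kk}(\g)\) yields a graded-mixed derivation. The commutator of derivations corresponds to \(([\theta,\theta'],[\delta,\delta'])\), so \(\Der^{\gm}(\CE^*(\g))\cong D^1_{A/\kk}(\g)\) as dg-Lie algebras, with \(\sigma\) matching restriction to weight zero. I expect this sign-tracking in weight one, together with checking that the dualisation is compatible with the commutator bracket, to be the main obstacle. I would handle it by the standard Cartan-calculus identities and not recompute the full Koszul signs.

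Next, I would pass to basic derivations. In the affine smooth setting, \(\pi^{-1}\cO_S=B\), \(\mathbb R\Der_\kk(B,A)=\Der_\kk(B,A)=A\otimes_BT_{B/\kk}\) (using smoothness of \(B\)), and the map \(\bbT_X\to\Der_\kk(B,A)\) is \(q\). Because \(A\otimes_BT_{B/\kk}\) is discrete and \(q\) is surjective, the homotopy pullback in Definition~\ref{def:basic-gm-der} is computed by the strict fibre product. Its elements are triples \((\theta,\delta,\xi)\) with \(q(\theta)=1\otimes\xi\), which is exactly \(D^1_{\bas}(\g/B)\), and the bracket is componentwise.

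Finally, I would identify the inner action. Here \(\bbT_{\cF_{A/B}/B}\) is represented by \(\g\), and Construction~\ref{cons:inner-action} sends \(x\) to the Lie derivative \(L_x=[\iota_x,\epsilon_{CE}]\). On \(a\in A\) this gives \(\rho(x)(a)\). On \(\alpha\), the Cartan formula gives \(\alpha\mapsto\rho(x)\alpha(\cdot)\mp\alpha([x,\cdot])\), whose dual operator is \(\operatorname{ad}_x\), and the base component is zero. So under the identification the intrinsic inner action is \((\rho(x),\operatorname{ad}_x,0)=\iota(x)\). The crossed module \([\g\to D^1_{\bas}(\g/B)]\) therefore represents \(\Uder(\cF_{A/B})\). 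In the effective case, \(\iota\) is a degreewise monomorphism, and the last clause of Definition~\ref{def:intrinsic-controller} identifies \(\uuder(\g/B)\) with the homotopy cofibre. Independence of the chosen equivalence \(\DR(\cF_{A/B})\simeq\CE^*(\g)\) follows because both sides are cofibrant quasi-free models, so the derivation complexes agree up to equivalence.
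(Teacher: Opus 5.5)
Your computation of the weight-zero graded-mixed derivations of \(\CE^*(\g)\) is the paper's proof. You restrict to the generators \(A\) and \(\g^\vee[1]\), dualise to a first-order operator \(\delta\) with symbol \(\theta\), and evaluate the derivation \([\Theta,\epsilon_{CE}]\) on \(a\in A\) and \(\alpha\in\g^\vee\) to get anchor compatibility and the bracket-derivation identity. Your identification of the inner action through the Cartan formula also matches the paper.

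\textbf{The failing step.} You claim that the homotopy pullback of Definition~\ref{def:basic-gm-der} is computed by the strict fibre product ``because \(A\otimes_BT_{B/\kk}\) is discrete and \(q\) is surjective''. The maps entering that pullback are not \(q\) on \(T_{A/\kk}\). They are \(r_S\circ\sigma\colon\Der^{\gm}(R)\to A\otimes_BT_{B/\kk}\) and \(\xi\mapsto1\otimes\xi\). Only foliation-preserving vector fields occur as symbols, so neither map is surjective in general, and the two objects genuinely differ. Here is an example:
\begin{itemize}
\item Take \(A=\kk[t,x]\), \(B=\kk[t]\) and \(\g=T_{A/B}=A\partial_x\).
\item The symbols of \(D^1_{A/\kk}(\g)\) are exactly \(f(t)\partial_t+h(t,x)\partial_x\).
\item Hence \(D^1_{A/\kk}(\g)\oplus T_{B/\kk}\to A\otimes_BT_{B/\kk}\) has cokernel \(\kk[t,x]\partial_t/\kk[t]\partial_t\neq0\).
\item Everything here sits in degree zero, so this cokernel is \(H^1\) of the homotopy pullback.
\item But \(D^1_{\bas}(\g/B)\) is concentrated in degree zero, so the two cannot agree.
\end{itemize}

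\textbf{How the paper avoids it, and the fix.} The paper does not make this comparison. It reads the basic condition strictly, as in the sentence of Definition~\ref{def:basic-gm-der} saying that a strict section is a projectable pair \((\Theta,\xi)\). That is also the only level at which the bracket on basic derivations is defined. Drop the homotopy-pullback justification and identify the dg-Lie algebra of basic derivations with the strict fibre product of projectable pairs. With that reading your argument goes through.

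\textbf{A smaller point.} The clause of Definition~\ref{def:intrinsic-controller} you cite requires the inner map to be represented by a cofibration which is a degreewise monomorphism, and the paper's proof states this hypothesis. Injectivity of \(\iota\) alone identifies the cokernel with the homotopy cofibre only on underlying complexes.
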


\begin{proof}
We include the calculation because it identifies the graded-mixed formulation with the Lie--Rinehart formulation.

Set
 $
 R:=\CE^*(\g)=\Sym_A(\g^\vee[1]).
$
Throughout this calculation we adopt the convention that \(\CE^*(\g)=\Sym_A(\g^\vee[1])\), the mixed differential has bidegree \((-1,+1)\), and derivations are cohomological derivations of degree \(n\). The algebra \(R\) is free as a graded commutative algebra over \(A\), generated in weight one by \(\g^\vee[1]\). Hence a homogeneous weight-zero graded derivation \(\Theta\) of degree \(n\) of the underlying graded cdga is determined by the following two components:
\[
 \Theta_0:=\Theta|_A\from A\longrightarrow A[n]
\]
and
\[
 \Theta_1:=\Theta|_{\g^\vee[1]}\from
 \g^\vee[1]\longrightarrow \g^\vee[1][n],
\]
subject only to the graded Leibniz rule. The first component is a derivation \(\theta\in T_{A/\kk}^n\). Since \(\g\) is perfect, the second component may be dualised and desuspended; it is equivalently a \(\kk\)-linear operator
\[
 \delta\from\g\longrightarrow\g[n]
\]
satisfying the first-order Leibniz rule
\[
 \delta(ax)=\theta(a)x+(-1)^{n|a|}a\delta(x)
\]
for homogeneous \(a\in A\) and \(x\in\g\). Thus, before imposing compatibility with the mixed differential, a homogeneous weight-zero graded derivation of \(R\) is equivalent to a pair \((\theta,\delta)\), where \(\delta\) is a first-order operator on \(\g\) with symbol \(\theta\).

The graded mixed algebra \(R\) has two differentials. The internal cohomological differential is induced by the differential of the dg-Lie algebroid \(\g\). The mixed Chevalley--Eilenberg differential, denoted \(\epsilon_{CE}\), is the sum of the component dual to the anchor and the component dual to the bracket. A graded mixed derivation is a graded derivation commuting with \(\epsilon_{CE}\); the cohomological differential on the derivation complex is the commutator with the internal differential. Thus the structural condition is
\[
 [\Theta,\epsilon_{CE}]=0,
\]
not the assertion that \(\Theta\) is a closed element of the total derivation complex. We record the two substantive components of this equation.

First, evaluate the commutator on functions. For \(a\in A\), the anchor component of \(\epsilon_{CE}\) sends \(a\) to the functional \(x\mapsto \rho(x)(a)\). The identity \([\Theta,\epsilon_{CE}](a)=0\) says that, for every \(x\in\g\),
\[
 \theta(\rho(x)(a))-
\rho(\delta x)(a)-(-1)^{|\Theta||x|}\rho(x)(\theta(a))=0.
\]
Equivalently,
\[
 \rho(\delta x)=[\theta,\rho(x)] .
\]
Thus the symbol \(\theta\) and the operator \(\delta\) are compatible with the anchor.

Second, evaluate the commutator on a linear generator \(\alpha\in\g^\vee\). The bracket part of \(\epsilon_{CE}\alpha\) is dual to the Lie bracket on \(\g\). The identity \([\Theta,\epsilon_{CE}](\alpha)=0\), evaluated on homogeneous \(x,y\in\g\), becomes
\[
 \delta([x,y])=[\delta x,y]+(-1)^{|\delta||x|}[x,\delta y].
\]
Thus \(\delta\) is a derivation of the dg-Lie bracket. The internal differential of \(\g\) is not an additional algebraic constraint on the underlying graded derivation; it is the differential of the derivation complex, sending \((\theta,\delta)\) to the graded commutator with the internal differential. Under the correspondence above, this is the differential of \(D^1_{A/\kk}(\g)\).

Consequently graded-mixed derivations of \(\CE^*(\g)\) are pairs \((\theta,\delta)\) satisfying the structural conditions in the definition of \(D^1_{A/\kk}(\g)\), with the same cohomological differential. The basic condition is also read from the transverse symbol: \(\Theta\) is basic when the image of \(\theta\) under
\[
 T_{A/\kk}\longrightarrow A\otimes_B T_{B/\kk}
\]
lies in \(1\otimes T_{B/\kk}\). Together with the chosen base vector field \(\xi\), this identifies a basic graded-mixed derivation with the triple \((\theta,\delta,\xi)\) defining \(D^1_{\bas}(\g/B)\). Hence
\[
 \Der^{\gm}_{\bas}(\CE^*(\g))\cong D^1_{\bas}(\g/B)
\]
as dg-Lie algebras. The bracket is the commutator on both sides: on the CE side it is the commutator of derivations of the graded mixed cdga, and on the Lie--Rinehart side it is the commutator of first-order Lie derivations.

It remains to compare the inner actions. A tangent element \(x\in\g\) acts on \(R\) by the usual Cartan Lie derivative \(L_x=[\epsilon_{CE},\iota_x]\). Under the preceding description this derivation has symbol \(\rho(x)\) and acts on \(\g\) by \(\operatorname{ad}_x\). Hence it corresponds exactly to
\[
 \iota(x)=(\rho(x),\operatorname{ad}_x,0)\in D^1_{\bas}(\g/B).
\]
Taking the homotopy quotient by these inner derivations gives the crossed module \([\g\to D^1_{\bas}(\g/B)]\). If \(\iota\) is injective and is represented by a cofibration which is a degreewise monomorphism, the ordinary quotient \(\uuder(\g/B)\) is a strict model of the corresponding homotopy cofibre.
\end{proof}

We now pass from the affine classification to the global statement, under explicit hypotheses of CE-presentability and descent. The argument uses the descent theory of derived foliations on derived stacks developed in \cite[Section~2.1]{TV}, Illusie's treatment of exact cotangent triangles under base change \cite{IllusieI}, and homotopical descent as in \cite{HAGII}.

\begin{definition}\label{def:local-groupoids}
Let \(U_\bullet\to X\) be a smooth affine hypercover carrying cofibrant strictly perfect CE presentations. For each \(n\), let
\[
 \mathcal U_n^{\mathrm{der}}
\]
be the derived local unfolding space defined by Theorem~\ref{thm:higher}, and let
\[
 \mathcal S_n^{\mathrm{der}}
 :=\operatorname{Flat}^{\mathrm{der}}
 \bigl(T_S,\Uder(\cF|_{U_n}/S)\bigr).
\]
Their one-truncations are the nerves of the strict local unfolding and flat-lift groupoids. Pullback along every simplicial morphism of the hypercover gives maps of spaces, so
\((\mathcal U_n^{\mathrm{der}})_n\) and
\((\mathcal S_n^{\mathrm{der}})_n\) are cosimplicial derived spaces.
\end{definition}

\begin{hypothesis}\label{hyp:global-descent}
Let \(\pi\from X\to S\) be a smooth morphism of derived Artin stacks over \(\kk\), and let \(\cF/S\) be a perfect relative derived foliation. We assume:
\begin{enumerate}[label=(\roman*)]
\item there is a smooth affine hypercover \(U_\bullet\to X\) such that each \(\cF|_{U_n}/S\) admits a cofibrant strictly perfect CE presentation;
\item the affine classification is functorial with respect to all face and degeneracy morphisms of the hypercover;
\item local unfoldings and local derived splitting spaces satisfy smooth descent;
\item the local crossed-module controllers descend to a stack of higher Lie algebroids \(\Uder(\cF/S)\) over \(X\);
\item the derived pushforward of this controller exists in the chosen \(\infty\)-category of Lie-algebroid stacks over \(S\), satisfies smooth hyperdescent, and carries a descended anchor
\[
 a\from \mathrm R\pi_*\Uder(\cF/S)\longrightarrow \bbT_S.
\]
\end{enumerate}
In the effective case we write \(\uuder(\cF/S)\) for the one-categorical truncation of \(\Uder(\cF/S)\).
\end{hypothesis}

\begin{proposition} \label{prop:checkable-global-descent}
Hypothesis~\ref{hyp:global-descent} holds whenever the following concrete conditions are satisfied for a smooth affine hypercover \(U_\bullet\to X\):
\begin{enumerate}[label=(\alph*)]
\item each \(\cF|_{U_n}/S\) is represented by a cofibrant strictly perfect CE dg-Lie algebroid \(\g_n\), and the induced local CE controllers form a cosimplicial crossed controller satisfying smooth descent;
\item the vertical complexes of the local controllers are perfect and \(\mathrm R\pi_*\)-perfect, so that their derived pushforwards exist in the chosen \(\infty\)-category of controller stacks over \(S\);
\item the bracket, anchor, inner action and crossed-module action descend under \(\mathrm R\pi_*\), and the descended anchor
\[
 a\from \mathrm R\pi_*\Uder(\cF/S)\longrightarrow \bbT_S
\]
is compatible with the cosimplicial descent data.
\end{enumerate}
Then the local derived classification glues to the global classification of Theorem~\ref{thm:intrinsic}. The criterion does not purport to construct a general theory of pushforwards for Lie-algebroid stacks; it isolates the precise descent hypotheses required here.
\end{proposition}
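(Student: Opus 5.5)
The strategy is to check the five items of Hypothesis~\ref{hyp:global-descent} one at a time, each from the matching concrete condition (a)--(c). I would then feed the verified hypothesis into the descent argument behind Theorem~\ref{thm:intrinsic}.

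\emph{Items (i) and (ii).} Item (i) is exactly the first half of (a). For (ii), I would apply Theorem~\ref{thm:higher} on each $U_n$ and note that every step of the construction is functorial in morphisms of triples $(B,A_n,\g_n)$: the adjoint-symbol map, the pullback $\e_s$, and the scalar extension $(A_n\otimes_B\e)/\cR$. This functoriality was recorded at the end of the proof of Theorem~\ref{thm:strict-affine}. The hypercover face and degeneracy maps are smooth, hence flat, so the scalar-extension construction commutes with the base changes $A_n\to A_m$; here I would use the flatness argument of Lemma~\ref{lem:automatic-basic-reconstruction}. This gives cosimplicial objects $(\mathcal U_n^{\mathrm{der}})_n$ and $(\mathcal S_n^{\mathrm{der}})_n$ together with a levelwise equivalence between them.

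\emph{Items (iii) and (iv).} Condition (a) says that the local crossed controllers $[\g_n\to D^1_{\bas}(\g_n/B)]$ form a cosimplicial crossed controller satisfying smooth descent. Its totalisation defines the stack $\Uder(\cF/S)$, which is item (iv). By Proposition~\ref{prop:intrinsic-comparison} and Theorem~\ref{thm:presentation-independence}, this stack is independent of the chosen presentations up to equivalence. For the descent of the splitting spaces, I would write $\mathcal S_n^{\mathrm{der}}$ as a mapping space $\Map(T_S,\Uder(\cF|_{U_n}/S))$. A mapping space into a limit is the limit of the mapping spaces, so descent of the controller gives descent of $\mathcal S^{\mathrm{der}}_\bullet$. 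Descent of $\mathcal U^{\mathrm{der}}_\bullet$ then follows by transport along the levelwise equivalence from step (ii). Together these give item (iii).

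\emph{Item (v).} Condition (b) gives the existence of $\mathrm R\pi_*$ on the vertical perfect pieces. Condition (c) gives the descended brackets, anchor and crossed action, and hence the anchor $a\from\mathrm R\pi_*\Uder(\cF/S)\to\bbT_S$ compatible with the descent data. Once (i)--(v) hold, the gluing statement is just Theorem~\ref{thm:intrinsic} applied.

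The main obstacle is item (iii) for the unfolding side. The paper's local spaces $\mathcal U^{\mathrm{der}}_n$ are defined only through the untruncated mapping space of Theorem~\ref{thm:higher}, so their descent is not independent of the controller's descent. I would therefore first check that the identification in Theorem~\ref{thm:higher} is natural, as an equivalence of cosimplicial spaces and not merely levelwise. Concretely, the adjoint-symbol map must commute with pullback up to coherent homotopy. I would try to get this coherence from the strictness of the constructions: all of them are defined by pullbacks and quotients. That reduces the needed coherence to strict commutation on cofibrant models, which holds because the face maps are flat.
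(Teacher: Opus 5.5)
Your skeleton matches the paper's proof. That proof takes the cosimplicial crossed controller from (a) and uses Proposition~\ref{prop:intrinsic-comparison} to say the local CE controllers present restrictions of the single intrinsic object \(\Uder(\cF/S)\). It uses (b) and (c) for the pushforward and the descended anchor. It then glues the local equivalences of Theorem~\ref{thm:higher} by homotopy limits, because mapping spaces commute with limits. Your handling of items (i), (iv), (v) and of descent for \(\mathcal S^{\mathrm{der}}_\bullet\) is the same argument. The paper never verifies item (ii), or item (iii) on the unfolding side, by a separate naturality argument. Where you go beyond it, deriving (ii) from strict functoriality of the affine construction, the step fails as written.

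\emph{Degeneracies.} The degeneracies of a smooth hypercover are not smooth or flat. They are sections of face maps, for example the diagonal \(U_0\to U_0\times_X U_0\), which is a closed immersion of positive codimension.

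\emph{Face maps.} This is the more serious problem. The face maps \(U_m\to U_n\) are smooth but not étale, and nothing in the strict construction is functorial along them. A vector field on \(U_n\) has no canonical lift to \(U_m\). So there is no canonical map \(T_{A_n/\kk}\to T_{A_m/\kk}\), and hence no strict map \(D^1_{\bas}(\g_n/B)\to D^1_{\bas}(\g_m/B)\). Restricting a foliation or an unfolding along a smooth map gives the inverse-image algebroid
\(T_{A_m/\kk}\times_{A_m\otimes_{A_n}T_{A_n/\kk}}(A_m\otimes_{A_n}\widetilde\g)\), which contains \(T_{A_m/A_n}\). It is not the scalar extension \(A_m\otimes_{A_n}\widetilde\g\), so flatness of the face maps is beside the point. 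Also, \(\g_m\) is an independently chosen presentation, tied to the restriction of \(\g_n\) only through an equivalence of CE algebras.

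Consequently, the functoriality ``in morphisms of triples preserving \(\g\)'' from the proof of Theorem~\ref{thm:strict-affine} does not apply, and strict commutation on cofibrant models is not available. This is why the paper's strict base-change results, Lemma~\ref{lem:first-order-etale-base-change} and Proposition~\ref{prop:base-change-D}, are restricted to étale maps, where the relative cotangent complex vanishes. Lifts are unique only modulo \(T_{A_m/A_n}\), which is leafwise for the restricted foliation. A compatible map therefore exists only after quotienting by inner derivations, that is, on the crossed (homotopy) quotient, and this is exactly what condition (a) assumes.

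\emph{Repair.} Follow the paper. Take the cosimplicial structure from (a) together with Proposition~\ref{prop:intrinsic-comparison}. Regard \(\mathcal U^{\mathrm{der}}_n\) as the mapping spaces by which Theorem~\ref{thm:higher} defines them. Then items (ii) and (iii) on the unfolding side are inherited from the splitting side, as you already half-observe in your last paragraph, rather than proved by strict naturality. The strict groupoids enter only as one-truncations.
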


\begin{proof}
The proof is an application of descent in the chosen \(\infty\)-category of controller stacks. By (a), the local CE controllers form a cosimplicial crossed controller and the local Lie--Rinehart identities descend as equalities of morphisms of perfect complexes. By Proposition~\ref{prop:intrinsic-comparison}, these local CE controllers present the restrictions of the intrinsic object
\[
 \Uder(\cF/S)=
 \bigl[\bbT_{\cF/S}\to\Der^{\gm}_{\bas}(\DR(\cF/S))\bigr].
\]
Condition (b) supplies the required derived pushforward, and condition (c) ensures that the pushed-forward bracket, anchor and crossed-module structure remain compatible with descent.

Flat splittings are regarded not merely as a set of sections, but as the derived mapping space
\[
 \operatorname{Flat}^{\mathrm{der}}_S(\bbT_S,\Uder(\cF/S))
 :=
 \Map_{\mathrm{LieAlgdStack}_{/\bbT_S}}
 \bigl(\bbT_S,\mathrm R\pi_*\Uder(\cF/S)\bigr),
\]
or the analogous crossed mapping space in the non-effective case. Mapping spaces in an \(\infty\)-category satisfy descent. Therefore the local affine equivalences glue by taking homotopy limits over the hypercover.
\end{proof}

\begin{theorem}\label{thm:intrinsic}
Under Hypothesis~\ref{hyp:global-descent}, transversal derived unfoldings of \(\cF/S\) are equivalent to the derived space of flat splittings of the descended controller:
\[
 \operatorname{Unf}^{\tr}_{\mathrm{der}}(\cF/S)
 \simeq
 \operatorname{Flat}^{\mathrm{der}}_S(\bbT_S,\Uder(\cF/S))
 :=
 \Map_{\mathrm{LieAlgdStack}_{/\bbT_S}}
 \bigl(\bbT_S,\mathrm R\pi_*\Uder(\cF/S)\bigr),
\]
where the target is computed in the localised \(L_\infty\)-algebroid stack. Its one-truncation classifies strict descended unfoldings. In the effective strict case this equivalence induces a bijection on connected components with ordinary flat splittings of \(\mathrm R\pi_*\uuder(\cF/S)\to\bbT_S\).
\end{theorem}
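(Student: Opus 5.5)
The proof is a descent argument that glues the local affine equivalences of Theorem~\ref{thm:higher} over the hypercover of Hypothesis~\ref{hyp:global-descent}(i). Before gluing, one definitional point must be fixed: $\operatorname{Unf}^{\tr}_{\mathrm{der}}(\cF/S)$ is taken to be the homotopy limit
\[
 \operatorname{Unf}^{\tr}_{\mathrm{der}}(\cF/S):=\operatorname{holim}_{[n]\in\Delta}\mathcal U_n^{\mathrm{der}},
\]
the local spaces of Definition~\ref{def:local-groupoids} being the untruncated derived unfolding spaces of Theorem~\ref{thm:higher}. Hypothesis~\ref{hyp:global-descent}(iii) says this limit does not depend on the hypercover, since local unfoldings satisfy smooth descent.

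\emph{Step 1: the cosimplicial local equivalence.} For each $n$, Theorem~\ref{thm:higher} gives $\mathcal U_n^{\mathrm{der}}\simeq\mathcal S_n^{\mathrm{der}}$. These equivalences must commute with face and degeneracy maps up to coherent homotopy, so that they assemble into an equivalence of cosimplicial spaces; Hypothesis~\ref{hyp:global-descent}(ii) supplies this. I would check the commutation concretely on strict objects. The adjoint-symbol map $\widehat\xi\mapsto\Theta_{\widehat\xi}$ commutes with pullback along smooth maps $U_m\to U_n$. By Proposition~\ref{prop:intrinsic-comparison}, the local controllers $[\g_n\to D^1_{\bas}(\g_n/B)]$ are models of the single intrinsic object $\Uder(\cF/S)$. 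Hence their transition maps are the restriction maps of $\Uder(\cF/S)$, and the cosimplicial diagrams agree. The homotopy limits are then equivalent.

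\emph{Step 2: identify the limit of flat-splitting spaces with the global mapping space.} By Hypothesis~\ref{hyp:global-descent}(iv),(v), the controller descends and its pushforward $\mathrm R\pi_*\Uder(\cF/S)$ carries an anchor to $\bbT_S$ that satisfies hyperdescent. Since mapping spaces in the $\infty$-category $\mathrm{LieAlgdStack}_{/\bbT_S}$ send colimits in the source and limits in the target to limits, I would write
\[
 \Map_{/\bbT_S}\bigl(\bbT_S,\mathrm R\pi_*\Uder(\cF/S)\bigr)\simeq\operatorname{holim}_n\Map_{/\bbT_S}\bigl(\bbT_S,\mathrm R\pi_*\Uder(\cF|_{U_n}/S)\bigr).
\]
Each term on the right is then identified with $\mathcal S_n^{\mathrm{der}}$, by adjunction between pushforward and the affine mapping space of Definition~\ref{def:derived-flat-space}. \textbf{This step is the main obstacle.} The local spaces $\mathcal S_n^{\mathrm{der}}$ are defined over the affine base $B$ with target the controller on $U_n$, whereas the global space uses $\mathrm R\pi_*$ over $S$. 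My first approach is to localise $S$ as well: take the hypercover compatible with an affine cover of $S$, and use that for affine $U_n\to\Spec B$ the derived pushforward is simply restriction of scalars along $B\to A$, which is exactly how $D^1_{\bas}$ was regarded as a $B$-algebroid in Lemma~\ref{lem:base-LR-controller}. The remaining coherences would be absorbed into Hypothesis~\ref{hyp:global-descent}(v).

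\emph{Step 3: the truncated statements.} Truncation $\tau_{\le1}$ does not commute with homotopy limits in general, but it does for limits of $1$-types when all terms are $1$-truncated. I would therefore deduce that the one-truncation classifies strict descended unfoldings by gluing the strict groupoid equivalences of Theorem~\ref{thm:higher} together with Proposition~\ref{prop:strict-presentation-crossed}; the global groupoid is the $2$-limit of the local ones. In the effective case, Lemma~\ref{lem:effective-discrete} gives discrete local groupoids, and Proposition~\ref{prop:strict-flat-pi0} identifies $\pi_0$ with ordinary flat splittings. Applying $\pi_0$ then yields the bijection with flat splittings of $\mathrm R\pi_*\uuder(\cF/S)\to\bbT_S$. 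To justify passing $\pi_0$ through the limit, I would use that the local spaces are sets with vanishing higher stabilisers.
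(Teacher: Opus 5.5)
Your Steps~1 and~2 are the paper's proof. Theorem~\ref{thm:higher} with Hypothesis~\ref{hyp:global-descent}(ii) gives a levelwise equivalence of cosimplicial spaces. Hypothesis~(iii) identifies $\operatorname*{holim}_n\mathcal U_n^{\mathrm{der}}$ with the global unfolding space. Descent of mapping spaces identifies $\operatorname*{holim}_n\mathcal S_n^{\mathrm{der}}$ with $\Map(\bbT_S,\mathrm R\pi_*\Uder(\cF/S))$. Your worry in Step~2 that the local spaces live over an affine base is legitimate. The paper does not address it separately; it absorbs it into (v) and the phrase ``mapping spaces satisfy hyperdescent''.

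The gap is in Step~3. The theorem makes claims about $\tau_{\le1}$ and $\pi_0$ of the \emph{global} space $\operatorname*{holim}_n\mathcal S_n^{\mathrm{der}}$. Gluing the strict groupoids of Theorem~\ref{thm:higher}, your $2$-limit, instead computes $\operatorname*{holim}_n\tau_{\le1}\mathcal S_n^{\mathrm{der}}$. You justify the agreement by saying the terms are $1$-truncated (discrete in the effective case), and that premise is false. By definition $\mathcal S_n^{\mathrm{der}}$ is the \emph{untruncated} mapping space. By Propositions~\ref{prop:strict-presentation-crossed} and~\ref{prop:central-stabiliser-complex}, it retains higher homotopy governed by $\Tot C^\bullet_{\mathrm{CE}}(T_{B/\kk};\z(\g)_s)$ and by the internal dg structure.

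The effective case is no better. Lemma~\ref{lem:effective-discrete} only kills strict stabilisers, i.e.\ $\pi_1$. Proposition~\ref{prop:strict-flat-pi0} needs its extra connectivity hypothesis even to identify the local $\pi_0$. In the Bousfield--Kan spectral sequence, the terms $\pi^s\pi_t$ with $t\ge2$ contribute to $\pi_0$ and $\pi_1$ of the limit (e.g.\ $\pi^2\pi_2$ and $\pi^1\pi_2$). Hence neither $\tau_{\le1}\operatorname*{holim}\to\operatorname*{holim}\tau_{\le1}$ nor $\pi_0\operatorname*{holim}\to\lim\pi_0$ need be injective or surjective.

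To close this you have two options. You can add a hypothesis: the local derived splitting spaces are $1$-truncated (resp.\ discrete), or the relevant $E_2$-terms vanish. Alternatively, you can interpret ``strict descended unfoldings'' as $\operatorname*{holim}_n\tau_{\le1}\mathcal S_n^{\mathrm{der}}$ and state the truncated claims for that object. The paper's proof just says ``taking one-truncations'' and so tacitly makes the same assumption. You were right to flag that truncation does not commute with the limit, but the remedy you propose does not apply to these spaces.
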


\begin{proof}
For every \(n\), Theorem~\ref{thm:higher} and its effective specialisation provide an equivalence of derived spaces
\[
 \Phi_n\from \mathcal U_n^{\mathrm{der}}
 \longrightarrow \mathcal S_n^{\mathrm{der}}.
\]
By assumption~(ii), these equivalences are compatible with all coface and codegeneracy morphisms. Thus
\((\Phi_n)_n\) is a levelwise equivalence of cosimplicial spaces. Taking the homotopy limit over \(\Delta\) gives
\[
 \operatorname*{holim}_{[n]\in\Delta}
 \mathcal U_n^{\mathrm{der}}
 \xrightarrow{\ \sim\ }
 \operatorname*{holim}_{[n]\in\Delta}
 \mathcal S_n^{\mathrm{der}}.
\]
Assumption~(iii) identifies the left-hand side with the global derived unfolding space. The right-hand side is the mapping space into the descended pushed-forward controller, because mapping spaces in an \(\infty\)-category satisfy hyperdescent. This proves
\[
 \operatorname{Unf}^{\tr}_{\mathrm{der}}(\cF/S)
 \simeq
 \operatorname{Flat}^{\mathrm{der}}_S
 \bigl(\bbT_S,\Uder(\cF/S)\bigr).
\]
Taking one-truncations yields the strict descended groupoid; in the effective strict case, passing further to connected components recovers ordinary flat splittings.
\end{proof}

\begin{remark}
Theorem \ref{thm:intrinsic} is stated as a descent theorem. The affine classification provides the local theorem; the global statement is obtained by gluing, and no stronger claim is made. This formulation retains the intrinsically homotopical nature of the quotient by inner symmetries.
\end{remark}

\section{Homotopy invariance of the controller}\label{sec:homotopy-invariance}

The naively defined derivation complex of a non-cofibrant algebra is not homotopy invariant. The invariant replacement is the tangent \(L_\infty\)-algebroid of the formal moduli problem of projectable graded-mixed automorphisms. Although a bimodule of derivations may compare the underlying complexes, it does not by itself carry the requisite strict bracket.

\begin{definition}
Let \(R\) be a cofibrant quasi-free graded-mixed cdga under \(\DR(A/B)\). Denote by
\(\mathcal{A}ut^{\mathrm{gm}}_{\bas}(R)\) the formal moduli problem which assigns to a nilpotent augmented cdga \(C\) the space of graded-mixed automorphisms of
\(R\otimes C\) reducing to the identity modulo the augmentation and whose symbols are projectable over the base. Its tangent \(L_\infty\)-algebroid is denoted
\[
 \mathbb R\Der^{\mathrm{gm}}_{\bas}(R).
\]
For a cofibrant quasi-free model its underlying tangent complex is the projectable derivation complex of Definition~\ref{def:basic-gm-der}, and its binary bracket is the commutator of derivations.
\end{definition}

The characteristic-zero equivalence between formal moduli problems and dg- or \(L_\infty\)-algebroids is used here in its relative form; see \cite{Hinich01,Pridham10,Nuiten,NuitenKoszul}. The semi-model structures of \cite{Nuiten} provide admissible cofibrant replacements, while the formal-moduli interpretation determines the invariant higher brackets.

\begin{proposition}\label{prop:homotopy-invariance-der}
Let \(R\to R'\) be an equivalence of perfect cofibrant quasi-free graded-mixed cdga's under \(\DR(A/B)\). Conjugation along this equivalence induces an equivalence of formal moduli problems
\[
 \mathcal{A}ut^{\mathrm{gm}}_{\bas}(R)
 \simeq
 \mathcal{A}ut^{\mathrm{gm}}_{\bas}(R'),
\]
and hence an equivalence of tangent \(L_\infty\)-algebroids
\[
 \mathbb R\Der^{\mathrm{gm}}_{\bas}(R)
 \simeq
 \mathbb R\Der^{\mathrm{gm}}_{\bas}(R').
\]
The equivalence is compatible with the projectable symbol and with the tangent-inner Cartan action.
\end{proposition}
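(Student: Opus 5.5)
The approach is to construct the equivalence at the level of formal moduli problems first, and only afterwards pass to tangent \(L_\infty\)-algebroids. Let \(f\from R\to R'\) be the given equivalence under \(\DR(A/B)\). Both algebras are cofibrant, and every object is fibrant in the graded-mixed model structure. Hence \(f\) admits a homotopy inverse \(g\from R'\to R\) under \(\DR(A/B)\), together with homotopies \(gf\simeq\id_R\) and \(fg\simeq\id_{R'}\), realised through a path object \(R\otimes\Omega^\bullet(\Delta^1)\). I would avoid strict conjugation \(\phi\mapsto f\phi g\). That formula is not multiplicative on the nose. Instead, for each nilpotent augmented \(C\), I would model \(\mathcal Aut^{\mathrm{gm}}_{\bas}(R)(C)\) as the union of the invertible components of the derived endomorphism space \(\Map_{/\DR(A/B)\otimes C}(R\otimes C,R\otimes C)\) that reduce to the identity. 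Conjugation by \(f\otimes C\) then yields an equivalence of mapping spaces, since \(f\otimes C\) is an equivalence between cofibrant objects and \(-\otimes C\) preserves cofibrant quasi-free objects and equivalences. Naturality in \(C\) holds because the construction is base change along \(k\to C\). This gives an equivalence of the underlying functors, and both functors satisfy the Schlessinger-type gluing conditions. These conditions follow from the fact that mapping spaces out of a cofibrant object convert homotopy pullbacks of \(C\)'s into homotopy pullbacks.

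The second step is to check that projectability is preserved. The symbol of an automorphism is its restriction to the weight-zero part \(A\otimes C\). Since \(f\) is a map under \(\DR(A/B)\), it is the identity in weight zero on the pulled-back functions \(B\). Therefore the composite of the symbol with restriction to \(B\), which is the map \(r_S\circ\sigma\) of Definition~\ref{def:basic-gm-der}, commutes with conjugation. The base component \(\xi\) is therefore unchanged. Because the basic subfunctor is defined by a homotopy pullback over \(\mathbb R\Der_{\kk}(B,A)\), this compatibility gives an equivalence of the basic subfunctors.

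The third step uses the characteristic-zero relative Lurie--Pridham correspondence of \cite{Hinich01,Pridham10,Nuiten} between formal moduli problems and \(L_\infty\)-algebroids. Under this correspondence, an equivalence of formal moduli problems induces an equivalence of tangent \(L_\infty\)-algebroids. The symbol map to \(T\) and the anchor to \(T_{B/\kk}\) come from natural transformations of formal moduli problems, namely restriction to \(A\otimes C\) and to \(B\otimes C\). They are therefore also compatible with the induced equivalence. For the tangent-inner Cartan action, Construction~\ref{cons:inner-action} describes \(\bbT_{\cF}\) as acting by \(L_x=[\epsilon,\iota_x]\). I would realise this action as a map of formal moduli problems: exponentiated Cartan gauge transformations \(\exp(L_x)\) for \(x\in\bbT\otimes\mathfrak m_C\). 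Conjugation by \(f\) matches \(\iota_x\) with \(\iota_{f_*x}\), where \(f_*\from\bbT_R\simeq\bbT_{R'}\) is the identification of weight-one duals. The two Cartan maps then agree up to the homotopies fixed in the first step.

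I expect the main obstacle to be the first step, namely making the conjugation coherent. The homotopy inverse and the homotopies \(gf\simeq\id\) must produce an \(A_\infty\)- or group-like compatible equivalence, not merely a bijection on components. I would handle this by expressing both sides as loop spaces, that is, automorphism spaces of the objects \(R\otimes C\) and \(R'\otimes C\) in the \(\infty\)-category of graded-mixed cdgas under \(\DR(A/B)\otimes C\) with fixed reduction. Then \(f\) is an equivalence of objects in this \(\infty\)-category, and equivalent objects have equivalent automorphism group objects, functorially. This sidesteps explicit strict formulas. The remaining task is to check that the invariance of the tangent complex agrees with the naive projectable derivation complex for cofibrant quasi-free models, as asserted in the preceding definition.
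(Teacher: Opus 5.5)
Your overall route matches the paper's: an equivalence of automorphism functors after base change to each test algebra \(C\), preservation of projectability through the homotopy pullback of Definition~\ref{def:basic-gm-der}, passage to tangent \(L_\infty\)-algebroids by the Lurie--Pridham correspondence, and naturality of \(L_x=[\epsilon,\iota_x]\). However, your first step builds the wrong formal moduli problem. You take automorphisms of \(R\otimes C\) in the \(\infty\)-category of graded-mixed cdga's \emph{under} \(\DR(A/B)\otimes C\). Such an automorphism carries a homotopy between its restriction to \(\DR(A/B)\otimes C\) and the structure map. In weight zero this makes its restriction to \(A\otimes C\) homotopic to the structure map. Since \(\Der_{\kk}(A,A)\simeq T_{A/\kk}\) is discrete for smooth \(A\), the symbol of every tangent vector then vanishes. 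The tangent complex of your functor is the fibre of \(\Der^{\gm}(R)\to\Der^{\gm}(\DR(A/B),R)\). In the strict CE model these are the pairs \((0,\delta)\) with \(\delta\) an \(A\)-linear bracket derivation and \(\rho\circ\delta=0\), not \(D^1_{\bas}(\g/B)\). Consequently the base component \(\xi\) in your second step is identically zero and the projectability condition is vacuous. Moreover, the inner derivations \(L_x\), whose symbol \(\rho(x)\) is in general nonzero, and your gauge transformations \(\exp(L_x)\) do not lie in your functor at all. So steps two and three rely on structure that step one has discarded.

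The repair is to let the weight-zero algebra and the base move. Take automorphisms of \(R\otimes C\) as a \(C\)-linear graded-mixed cdga, with no structure map fixed. Couple them to automorphisms of \(B\otimes C\) through the homotopy fibre product
\[
 \mathrm{Aut}^{\gm}(R\otimes C)\times^h_{\Map(B\otimes C,\,R^{(0)}\otimes C)}\mathrm{Aut}(B\otimes C),
\]
formed with \(\varphi\mapsto\varphi^{(0)}\circ j\) and \(\psi\mapsto j\circ\psi\), where \(j\) is the structure map from \(B\). Its tangent complex is exactly the homotopy pullback defining \(\Der^{\gm}_{\bas}(R)\). Your coherence device then applies in the right place: \(f\otimes C\) is an equivalence in the \(\infty\)-category of \(C\)-linear graded-mixed cdga's, so the automorphism group objects are equivalent. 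Because \(f\) is under \(\DR(A/B)\), one has \(f^{(0)}\circ j=j'\). Hence conjugation on the first factor, the identity on the second, and post-composition with \(f^{(0)}\) on the base assemble into an equivalence of fibre products that visibly preserves \(\xi\) and the symbol. With this correction your argument follows the paper's proof. It also makes explicit the coherence that the paper's phrase ``an equivalence between the spaces of automorphisms'' leaves implicit.
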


\begin{proof}
An equivalence \(R\to R'\) induces, after derived base change to every nilpotent augmented test algebra, an equivalence between the spaces of automorphisms reducing to the identity. The projectability condition is expressed by the homotopy pullback of symbol complexes in Definition~\ref{def:basic-gm-der} and is therefore preserved. This gives an equivalence of the two formal moduli problems. Passing to tangent \(L_\infty\)-algebroids yields the displayed equivalence by relative formal deformation theory \cite{Hinich01,Pridham10,NuitenKoszul}.

The tangent-inner map is obtained from the Cartan formula
\(L_x=[\epsilon,\iota_x]\). Contraction and Lie derivative are natural under equivalences of cofibrant graded-mixed algebras, so the equivalence of tangent algebroids intertwines the inner actions and their anchors.
\end{proof}

\begin{theorem}\label{thm:presentation-independence}
Let \(\cF_{A/B}\) be a perfect relative derived foliation and suppose that
\[
 \CE^*(\g)\simeq\DR(\cF_{A/B})\simeq\CE^*(\g')
\]
are two cofibrant strictly perfect CE presentations. Then their crossed controllers are equivalent as two-term \(L_\infty\)-algebroids over \(T_{B/\kk}\):
\[
 \bigl[\g\to D^1_{\bas}(\g/B)\bigr]
 \simeq
 \bigl[\g'\to D^1_{\bas}(\g'/B)\bigr].
\]
Both represent the intrinsic controller \(\Uder(\cF_{A/B})\). In the effective case the invariant quotient is the homotopy cofibre
\[
 \hocofib\bigl(\g\to D^1_{\bas}(\g/B)\bigr),
\]
and the analogous homotopy cofibre for \(\g'\). If the inner maps are cofibrations represented by degreewise monomorphisms, the ordinary quotients \(\uuder(\g/B)\) and \(\uuder(\g'/B)\) are strict models of these equivalent homotopy cofibres.
\end{theorem}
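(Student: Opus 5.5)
The plan is to pass through the intrinsic controller rather than compare \(\g\) and \(\g'\) directly. Set \(R=\CE^*(\g)\) and \(R'=\CE^*(\g')\). Both are cofibrant quasi-free graded-mixed cdga's under \(\DR(A/B)\), and they are perfect because \(\g,\g'\) are strictly perfect. The given zigzag \(R\simeq\DR(\cF_{A/B})\simeq R'\) can be replaced by a single equivalence \(f\from R\to R'\) under \(\DR(A/B)\). This works because \(R\) is cofibrant, so a morphism in the homotopy category lifts to a strict map, and \(R'\) may be taken fibrant in Nuiten's semi-model structure. Proposition~\ref{prop:homotopy-invariance-der} then gives an equivalence of tangent \(L_\infty\)-algebroids
\[
 \mathbb R\Der^{\mathrm{gm}}_{\bas}(R)\simeq\mathbb R\Der^{\mathrm{gm}}_{\bas}(R').
\]
This equivalence is compatible with the projectable symbol and with the Cartan inner action \(L_x=[\epsilon,\iota_x]\).

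Next, identify each side with the strict controller. By Proposition~\ref{prop:intrinsic-comparison}, the strict basic graded-mixed derivations of \(R\) form \(D^1_{\bas}(\g/B)\), and the intrinsic inner action is \(\iota\). Since \(R\) is cofibrant quasi-free, the definition of \(\mathbb R\Der^{\mathrm{gm}}_{\bas}(R)\) says its underlying complex is the strict projectable derivation complex and its binary bracket is the commutator. The strict dg-Lie algebra \(D^1_{\bas}(\g/B)\) is therefore a model of the tangent \(L_\infty\)-algebroid, and it has vanishing higher brackets. The tangent complexes \(\bbT_{\cF/B}\) computed from \(R\) and \(R'\) are the duals of the weight-one pieces, namely \(\g\) and \(\g'\). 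The map \(f\) induces an equivalence \(\g\simeq\g'\) of \(L_\infty\)-algebroids over \(T_{A/B}\). The compatibility clause of Proposition~\ref{prop:homotopy-invariance-der} gives a homotopy-commutative square intertwining \(\iota\) and \(\iota'\) over the base anchor to \(T_{B/\kk}\).

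Then pass to crossed modules. A crossed dg-Lie algebroid \([\g\to D]\) is read as the two-term \(L_\infty\)-algebroid of the convention in Section~\ref{sec:homotopical-controllers}. This assignment sends a homotopy-coherent morphism of arrows (a morphism of \(L_\infty\)-pairs \((\g,D)\to(\g',D')\) with a homotopy for the square) to an \(L_\infty\)-morphism of the associated two-term objects. That morphism is an equivalence whenever both components are. This yields the claimed equivalence of crossed controllers over \(T_{B/\kk}\). Together with Definition~\ref{def:intrinsic-controller} and Proposition~\ref{prop:intrinsic-comparison}, it shows both represent \(\Uder(\cF_{A/B})\).

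For the effective case, homotopy cofibres are functorial for homotopy-commutative squares and preserve componentwise equivalences. So \(\hocofib(\iota)\simeq\hocofib(\iota')\). When \(\iota\) and \(\iota'\) are cofibrations given by degreewise monomorphisms, the ordinary cokernel computes the homotopy cofibre, which gives the statement about \(\uuder(\g/B)\) and \(\uuder(\g'/B)\).

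The main obstacle I expect is the rectification step. Proposition~\ref{prop:homotopy-invariance-der} only produces an \(L_\infty\)-equivalence together with a homotopy for the inner-action square, not a strict map of crossed modules. One must therefore check that the two-term \(L_\infty\)-construction is homotopy invariant for such coherent data, including the action component \(D\cdot x\) of the crossed module. I would first try to realise everything inside the \(\infty\)-category of arrows in \(\mathrm{LieAlgd}^\infty_B\) over \(T_{B/\kk}\) and define the two-term controller as a functor on that arrow category, so that invariance follows formally. Failing that, I would write the coherent morphism explicitly via the Cartan homotopy \(\iota_x\), which supplies the needed higher component.
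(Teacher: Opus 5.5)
Your proposal follows the paper's proof. Proposition~\ref{prop:intrinsic-comparison} identifies each strict basic derivation algebra with the tangent algebroid \(\mathbb R\Der^{\mathrm{gm}}_{\bas}\) of its CE model, Proposition~\ref{prop:homotopy-invariance-der} compares the two compatibly with the tangent-inner maps, and the homotopy quotient, invariance of homotopy cofibres and the cofibration hypothesis for ordinary cokernels complete the argument; your two additions are the points the paper compresses into the sentence ``taking the homotopy quotient by this natural inner action gives the equivalence of crossed controllers.'' These additions are strictifying the zigzag to a single map \(R\to R'\) so that Proposition~\ref{prop:homotopy-invariance-der} literally applies, and flagging that the crossed-module action, not just the arrow \(\iota\), must be transported coherently; for the first, the relevant model structure is the one on graded-mixed cdga's under \(\DR(A/B)\), not Nuiten's semi-model structure on algebroids.
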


\begin{proof}
By Proposition~\ref{prop:intrinsic-comparison}, the strict basic derivation algebras of the two CE models represent the tangent algebroids
\(\mathbb R\Der^{\mathrm{gm}}_{\bas}(\CE^*(\g))\) and
\(\mathbb R\Der^{\mathrm{gm}}_{\bas}(\CE^*(\g'))\). Proposition~\ref{prop:homotopy-invariance-der} identifies these tangent \(L_\infty\)-algebroids and intertwines the two tangent-inner maps. Taking the homotopy quotient by this natural inner action gives the equivalence of crossed controllers. Homotopy cofibres are invariant under equivalence. Under the final cofibration hypothesis, the ordinary cokernel computes the homotopy cofibre in the chosen strict semi-model presentation.
\end{proof}

\begin{corollary}\label{cor:classification-invariance}
The derived splitting space
\[
 \operatorname{Flat}^{\mathrm{der}}
 \bigl(T_{B/\kk},\Uder(\cF_{A/B})\bigr)
\]
and its one-truncation classifying strict unfoldings are independent of the chosen cofibrant CE presentation.
\end{corollary}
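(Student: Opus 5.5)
The corollary will follow formally from Theorem~\ref{thm:presentation-independence} combined with the homotopy invariance of derived mapping spaces. Fix two cofibrant strictly perfect CE presentations
\[
 \CE^*(\g)\simeq\DR(\cF_{A/B})\simeq\CE^*(\g').
\]
The first step is to invoke Theorem~\ref{thm:presentation-independence}. It gives an equivalence of two-term \(L_\infty\)-algebroids
\[
 \bigl[\g\to D^1_{\bas}(\g/B)\bigr]\simeq\bigl[\g'\to D^1_{\bas}(\g'/B)\bigr].
\]
The point to record is that this is an equivalence \emph{over} \(T_{B/\kk}\). By Proposition~\ref{prop:homotopy-invariance-der} it is compatible with projectable symbols, so the base components \(\xi\) are preserved. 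Hence it is an equivalence in the slice \((\mathrm{LieAlgd}^{\infty}_{B})_{/T_{B/\kk}}\), not merely in \(\mathrm{LieAlgd}^{\infty}_{B}\).

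The second step uses Definition~\ref{def:derived-flat-space}. Here \(\operatorname{Flat}^{\mathrm{der}}(T_{B/\kk},-)\) is the corepresentable functor \(\Map(T_{B/\kk},-)\) on the slice \(\infty\)-category. Any corepresentable functor sends equivalences to equivalences of spaces. So the two derived splitting spaces are equivalent, and both are equivalent to the mapping space into \(\Uder(\cF_{A/B})\), since each crossed controller represents it. To make sense of the mapping space on each side, I would first replace source and target by cofibrant-fibrant models in Nuiten's semi-model structure. The equivalence of targets is then a zig-zag of weak equivalences between fibrant objects under which mapping spaces are invariant.

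The third step applies \(\tau_{\leq1}\), which preserves equivalences of spaces. Theorem~\ref{thm:higher} identifies each one-truncation with the nerve of \(\operatorname{Unf}^{\tr}_{\mathrm{str}}(\g/B)\), respectively \(\operatorname{Unf}^{\tr}_{\mathrm{str}}(\g'/B)\). This yields an equivalence of the strict unfolding groupoids, and in the effective case, via Theorem~\ref{thm:strict-affine}, a bijection of isomorphism classes.

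The main obstacle I expect is in the first step: justifying that the equivalence really lives over \(T_{B/\kk}\). The base anchor is defined through the homotopy pullback in Definition~\ref{def:basic-gm-der}, and compatibility of the \(L_\infty\)-equivalence with that pullback is only asserted as part of Proposition~\ref{prop:homotopy-invariance-der}. I would check this by noting that the base component \(\xi\) is recorded through the pullback factor \(\pi^{-1}\bbT_S\). Conjugation by an equivalence under \(\DR(A/B)\) fixes the pulled-back functions from \(B\), so it acts as the identity on that factor.
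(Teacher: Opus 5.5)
Your proposal is correct and is the paper's own argument: Theorem~\ref{thm:presentation-independence} gives an equivalence of the two crossed controllers over \(T_{B/\kk}\), equivalent targets have equivalent derived mapping spaces in the slice \(\infty\)-category, and one-truncating via Theorem~\ref{thm:higher} gives the statement for strict unfoldings. The obstacle you flag is already covered by Theorem~\ref{thm:presentation-independence} itself, which states the equivalence over \(T_{B/\kk}\), so your extra check is consistent with the paper but not strictly needed.
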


\begin{proof}
Equivalent targets have equivalent derived mapping spaces in an \(\infty\)-category. Apply Theorem~\ref{thm:presentation-independence} and then take the one-truncation when strict unfoldings are desired.
\end{proof}

\section{Crystals and the Gauss--Manin connection}\label{sec:crystals}

Let \(R=\DR(\cF/S)\). The coefficient systems used below are quasi-coherent crystals in the sense of To\"en--Vezzosi, rather than arbitrary graded-mixed modules.

\begin{definition}\label{def:crystals}
The \(\infty\)-category \(\Crys(\cF/S)\) is the full subcategory of graded-mixed \(R\)-modules \(M\) such that
\begin{enumerate}[label=(\roman*)]
\item \(M^{(0)}\) is quasi-coherent over \(\cO_X\); and
\item the canonical morphism of underlying graded modules
\[
 M^{(0)}\otimes_{\cO_X}^{\mathbb L}R^{\mathrm{gr}}
 \longrightarrow M^{\mathrm{gr}}
\]
is an equivalence.
\end{enumerate}
Perfect crystals are those for which \(M^{(0)}\) is perfect. The foliated de Rham complex is the Tate realisation
\[
 \DR_{\cF/S}(M):=M^{\Tate},
\]
in accordance with the graded-mixed realisation of \cite{TV,TVRH}.
\end{definition}

\begin{definition}\label{def:linearised-crystal}
Let
\[
 s\in\operatorname{Flat}^{\mathrm{der}}_S
 \bigl(\bbT_S,\Uder(\cF/S)\bigr)
\]
be a transverse splitting. A \emph{coherently \(s\)-Cartan-linearised crystal} is a crystal
\(M\in\Crys(\cF/S)\) endowed with a representation of the semidirect
\(L_\infty\)-algebroid determined by the splitting \(s\), together with a homotopy-coherent Cartan null-homotopy of the tangent-inner action. In a strict effective CE chart this consists of lifted Lie derivatives
\[
 L^M_{\Theta_\xi}\from M\longrightarrow M
\]
for representatives \(\Theta_\xi\) of \(s(\xi)\), and contractions \(\iota_v\) satisfying
\[
 [d_{\DR_{\cF/S}(M)},\iota_v]
 =L_v-\iota_{d_{\bbT}v},
\]
together with the bracket, module and descent coherences. In the non-effective case the same datum includes the homotopies associated with the central component of the crossed controller.
\end{definition}

\begin{construction}\label{cons:GM-local-action}
In a strict CE chart \(R=\CE^*(\g)\), choose a representative
\(\Theta_\xi\in D^1_{\bas}(\g/B)\) of the transverse class of a local vector field \(\xi\). The linearisation supplies a derivation
\[
 L^M_{\Theta_\xi}\from M\longrightarrow M
\]
over \(\Theta_\xi\). Its Tate realisation acts on \(\DR_{\cF/S}(M)\). The full Cartan-linearisation records coherent homotopies under changes of representative, including those measured by the crossed-module isotropy.
\end{construction}

\begin{lemma}\label{lem:cartan-homotopy}
For a coherently Cartan-linearised crystal, the tangent-inner action is null-homotopic on the foliated de Rham complex. More precisely, a closed degree-zero tangent element \(v\) satisfies
\[
 L_v=[d_{\DR_{\cF/S}(M)},\iota_v],
\]
and the corresponding statement for a non-closed or higher tangent homotopy is supplied by the full Cartan identity in Definition~\ref{def:linearised-crystal}.
\end{lemma}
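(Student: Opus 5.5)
The plan is to deduce the lemma directly from the Cartan identity in Definition~\ref{def:linearised-crystal}. I would do the computation first in a strict CE chart, and then pass to the homotopy-coherent statement by descent along the chosen hypercover.

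\textbf{Strict chart.} Work locally in a strict CE chart \(R=\CE^*(\g)\), where \(M\) is a crystal. Condition~(ii) of Definition~\ref{def:crystals} then gives
\[
 M^{\mathrm{gr}}\simeq M^{(0)}\otimes_A\Sym_A(\g^\vee[1]).
\]
So contraction \(\iota_v\) by a tangent element \(v\in\g\) is defined on \(M\): it is the derivation over the contraction of \(R\), it is trivial on \(M^{(0)}\), and it lowers weight by one. The linearisation datum also supplies the Lie derivative \(L_v\), which lies over the inner derivation \(\iota(v)=(\rho(v),\operatorname{ad}_v,0)\). By Proposition~\ref{prop:intrinsic-comparison}, this inner derivation is the Cartan derivative \([\epsilon_{CE},\iota_v]\) on \(R\).

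\textbf{Tate realisation.} The differential of \(\DR_{\cF/S}(M)=M^{\Tate}\) is the sum of the internal differential and the mixed structure \(\epsilon_M\), assembled over weights. Taking the commutator of \(\iota_v\) with each piece gives:
\[
 [\epsilon_M,\iota_v]=L_v,
 \qquad
 [d_M,\iota_v]=-\iota_{d_{\bbT}v},
\]
the first being the Cartan formula and the second the naturality of contraction in \(v\). Adding these recovers the identity of Definition~\ref{def:linearised-crystal}. If \(v\) is closed of degree zero, then \(\iota_{d_{\bbT}v}=0\), and this gives
\[
 L_v=[d_{\DR_{\cF/S}(M)},\iota_v].
\]
Hence \(\iota_v\) is an explicit null-homotopy of \(L_v\). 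I would also check that \(\iota_v\) is compatible with the completion defining \(M^{\Tate}\). This holds because \(\iota_v\) has weight \(-1\) and is \(M^{(0)}\)-linear, so it preserves the weight filtration up to a shift.

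\textbf{Non-closed and higher cases, and globalisation.} For non-closed or higher tangent elements I would not give a separate argument. Those identities are, by definition, part of the coherent Cartan datum of Definition~\ref{def:linearised-crystal}, so the lemma only has to observe that they restrict to the stated identity. To globalise, I would note that contractions, Lie derivatives and Tate realisation are compatible with pullback along the face and degeneracy maps of the hypercover. The local null-homotopies then assemble into a coherent null-homotopy in the homotopy limit.

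\textbf{Main obstacle.} The sign bookkeeping is routine. The real difficulty is that the crystal's mixed structure \(\epsilon_M\) must restrict to \(\epsilon_{CE}\) along \(R\), so that \([\epsilon_M,\iota_v]\) really equals the chosen linearised \(L_v\) and not merely agrees with it up to homotopy. In the strict chart I would settle this using the module structure and the free generation of \(M^{\mathrm{gr}}\) in condition~(ii). Away from strict charts, I would treat the agreement as the content of the homotopy-coherent Cartan null-homotopy that is built into the definition.
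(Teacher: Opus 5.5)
Your proposal is correct in substance and follows the paper's route: the identity $[d_{\DR_{\cF/S}(M)},\iota_v]=L_v-\iota_{d_{\bbT}v}$ is part of the datum of Definition~\ref{def:linearised-crystal}, and so are the contractions $\iota_v$ themselves. Hence closedness of the degree-zero element $v$ gives the lemma, the strict CE case is the usual Cartan formula, and the non-closed, higher and descent statements are left to the prescribed coherences, exactly as in the paper.

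You should drop, rather than try to prove, your strict-chart derivation of $[\epsilon_M,\iota_v]=L_v$ from free generation. Two derivations of $M$ over the same derivation of $R$ differ only by an $R$-linear weight-zero endomorphism determined by its restriction to $M^{(0)}$, and nothing in the crystal or representation data forces that restriction to vanish. This agreement is therefore precisely the Cartan axiom of the definition, which is how the paper uses it.
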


\begin{proof}
This identity is part of the Cartan-linearisation structure. In a strict CE presentation it is the usual Cartan formula on the Chevalley--Eilenberg module. The prescribed higher coherences make these local null-homotopies compatible with changes of representative and with descent.
\end{proof}

\begin{theorem}\label{thm:GM}
Let \(\pi\from X\to S\) be smooth, let \(\cF/S\) satisfy the hypotheses of Theorem~\ref{thm:intrinsic}, and let
\[
 s\in\operatorname{Flat}^{\mathrm{der}}_S
 \bigl(\bbT_S,\Uder(\cF/S)\bigr).
\]
If \(M\) is a coherently \(s\)-Cartan-linearised crystal and
\(R\pi_*\DR_{\cF/S}(M)\) exists, then this derived pushforward carries a canonical flat Gauss--Manin connection, equivalently a crystal structure over the de Rham foliation of \(S\):
\[
 \nabla^{\GM}_s\from
 R\pi_*\DR_{\cF/S}(M)
 \longrightarrow
 \bbL_S\otimes_{\cO_S}
 R\pi_*\DR_{\cF/S}(M).
\]
In a strict effective CE model the connection is represented by the ordinary operators
\[
 \nabla^{\GM}_{s,\xi}
 =R\pi_*\bigl(L^M_{\Theta_\xi}\bigr).
\]
\end{theorem}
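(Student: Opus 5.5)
The plan is to build the Gauss--Manin connection locally in strict effective CE charts, check that it is well defined and flat there, and then glue along the hypercover of Hypothesis~\ref{hyp:global-descent} using the coherences that are part of the Cartan-linearisation.

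\emph{Step 1: the local operator.} Work over an affine chart \(U_n=\Spec A\) with base \(\Spec B\), a CE presentation \(R=\CE^*(\g)\), and \(s\) represented by a strict flat splitting \(T_{B/\kk}\to\uuder(\g/B)\). For each \(\xi\in T_{B/\kk}\) choose a representative \(\Theta_\xi\in D^1_{\bas}(\g/B)\) as in Construction~\ref{cons:GM-local-action}. The operator \(L^M_{\Theta_\xi}\) is a derivation over \(\Theta_\xi\). Since \(\Theta_\xi\) is a graded-mixed derivation, it commutes with \(\epsilon\) and with the internal differential, so its Tate realisation commutes with \(d_{\DR_{\cF/S}(M)}\) and is a chain endomorphism. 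Its symbol is basic, and \(\pi^{-1}\cO_S\)-functions are acted on by \(\xi\). Hence on \(R\pi_*\) (here just global sections) it satisfies the Leibniz rule \(\nabla_\xi(fm)=\xi(f)m+f\nabla_\xi m\) for \(f\in B\). Because the lift is \(B\)-linear in \(\xi\), the operator is \(B\)-linear in \(\xi\) up to the inner correction treated next.

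\emph{Step 2: independence of representative and flatness.} Two representatives differ by \(\iota(v)\), \(v\in\g\), and \(L^M_{\iota(v)}=L_v\). By Lemma~\ref{lem:cartan-homotopy}, \(L_v=[d,\iota_v]\), which vanishes on cohomology. At chain level, the contraction \(\iota_v\) supplies the homotopy, so the induced operator is canonical in the homotopy category. Flatness follows from the same argument. Since \(s\) is flat, \([\Theta_\xi,\Theta_\eta]-\Theta_{[\xi,\eta]}=\iota(v_{\xi,\eta})\), and the module axiom \([L^M_{\Theta},L^M_{\Theta'}]=L^M_{[\Theta,\Theta']}\) gives
\[
 [\nabla_\xi,\nabla_\eta]-\nabla_{[\xi,\eta]}=[d,\iota_{v_{\xi,\eta}}].
\]
The curvature is therefore null-homotopic, and the higher homotopies of the representation of the semidirect \(L_\infty\)-algebroid in Definition~\ref{def:linearised-crystal} assemble these null-homotopies into an \(L_\infty\)-action of \(T_{B/\kk}\) on \(\DR_{\cF/S}(M)\). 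The cleanest way to say this is that a representation of \(\bbT_S\) by homotopy-coherent derivations is the same as a module over the de Rham foliation of \(S\), that is, a flat connection in the \(\infty\)-sense. This gives the crystal structure, and in the strict effective case it gives the displayed formula \(\nabla^{\GM}_{s,\xi}=R\pi_*(L^M_{\Theta_\xi})\).

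\emph{Step 3: globalisation.} The local structures on \(\Gamma(U_n,\DR_{\cF/S}(M))\) are compatible with faces and degeneracies by functoriality (Hypothesis~\ref{hyp:global-descent}(ii)) and by the descent coherences of the linearisation. Take the homotopy limit over \(\Delta\), which computes \(R\pi_*\). Crystals over the de Rham foliation of \(S\) form a limit-closed \(\infty\)-category, so the descended object is again a crystal, and its underlying map is \(\nabla^{\GM}_s\). The existence of \(R\pi_*\) is assumed.

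\emph{Main obstacle.} The main obstacle is Step 2 at the coherent level. One has to show that the choices of representatives, the contractions \(\iota_v\), and in the non-effective case the central-isotropy homotopies of the crossed controller, together form a single \(L_\infty\)-morphism \(\bbT_S\to\mathrm{End}(\DR_{\cF/S}(M))\), and not just a collection of compatible data. I would first try to build it directly as the composite of \(s\) with the action map of the semidirect algebroid. I would then use the Cartan null-homotopy to factor this composite through the homotopy quotient by \(\bbT_{\cF/S}\). In other words, the Tate realisation is invariant under the tangent-inner action, and that invariance is exactly what produces the factorisation.
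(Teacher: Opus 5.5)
Your proposal follows the paper's proof essentially step for step. Both construct local operators \(L^M_{\Theta_\xi}\) in CE charts, satisfying the Leibniz rule because the symbol is basic. Both obtain independence of the representative and flatness from the Cartan null-homotopy of the tangent-inner action, the curvature being inner because \(s\) is flat. Both then glue over the hypercover using the descent coherences of the linearisation.

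One small correction, which the paper's proof also passes over, concerns Step~1. A representative \(\Theta_\xi\) of the closed class \(s(\xi)\) need not commute with the internal differential; one only has \([d,\Theta_\xi]=\iota(w_\xi)\) with \(w_\xi\in\g^1\), which is closed in the effective case, as in Step~1 of the proof of Theorem~\ref{thm:strict-affine}. So the chain-level operator is the corrected \(L^M_{\Theta_\xi}-\iota_{w_\xi}\), up to sign conventions, obtained via the Cartan identity of Definition~\ref{def:linearised-crystal}, rather than \(L^M_{\Theta_\xi}\) itself.
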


\begin{proof}
Work first on a CE hypercover. A representative \(\Theta_\xi\) of a base vector field acts on the relative foliated de Rham complex by the operator
\(L^M_{\Theta_\xi}\). Its symbol is \(\xi\), so the module derivation identity gives
\[
 L^M_{\Theta_\xi}(f\alpha)
 =\xi(f)\alpha+fL^M_{\Theta_\xi}(\alpha)
\]
for local \(f\in\cO_S\). These operators therefore define a connection after derived pushforward.

Changing a representative changes the operator by a tangent-inner action and, in the non-effective case, by the accompanying higher isotropy homotopy. Definition~\ref{def:linearised-crystal} and Lemma~\ref{lem:cartan-homotopy} identify these changes by canonical homotopies. Hence the connection depends only on the point \(s\) of the derived splitting space.

The curvature is represented locally by
\[
 [L^M_{\Theta_\xi},L^M_{\Theta_\eta}]
 -L^M_{\Theta_{[\xi,\eta]}}.
\]
The Maurer--Cartan equation for the derived splitting identifies this curvature with a tangent-inner action together with its prescribed higher null-homotopy. The coherent Cartan action therefore renders the curvature null-homotopic in the derived category. Hence the connection is flat in the sense of crystals.

All operators and homotopies are functorial under restriction along the chosen hypercover. They satisfy the descent identities because the linearisation is a representation of the descended semidirect \(L_\infty\)-algebroid. The local connections consequently glue to the asserted global Gauss--Manin connection.
\end{proof}

\begin{remark}
The Cartan hypothesis is essential. A relative crystal provides leafwise transport, while the splitting provides transverse Lie derivatives. Independence of representatives and flatness require, in addition, coherent contractions for tangent-inner actions; in the non-effective case these contractions must include the central higher homotopies of the crossed controller.
\end{remark}

\section{\'Etale base change and descent}\label{sec:base-change}

The affine controller is functorial under \'etale change of the base. The proof must use the full Atiyah sequence, or equivalently the first principal-parts complex, before taking the fibre over a prescribed symbol: operators with a fixed symbol form an affine torsor, not a module.

\begin{definition}
Let \(E\) be a perfect \(A\)-complex. Write
\(\operatorname{At}_{A/\kk}(E)\) for the derived Atiyah complex of first-order operators on \(E\), with symbol morphism
\[
 \sigma_E\from\operatorname{At}_{A/\kk}(E)\longrightarrow T_{A/\kk}.
\]
For \(\theta\in T_{A/\kk}\), the space of operators with symbol \(\theta\) is the homotopy fibre
\[
 \operatorname{Diff}^1_\theta(E,E)
 :=\operatorname{hofib}_\theta(\sigma_E).
\]
It is an affine torsor over \(\RHom_A(E,E)\), rather than an \(A\)-module.
\end{definition}

\begin{lemma}\label{lem:first-order-etale-base-change}
Let \(B\to C\) be \'etale, put \(A_C=A\otimes_BC\), and set
\(E_C=A_C\otimes_A^{\mathbb L}E\). Then first principal parts and Atiyah complexes commute with base change:
\[
 A_C\otimes_A^{\mathbb L}\operatorname{At}_{A/\kk}(E)
 \simeq
 \operatorname{At}_{A_C/\kk}(E_C).
\]
The equivalence intertwines symbol maps. Consequently, for every symbol \(\theta\) and its base change \(\theta_C\), there is an equivalence of affine derived fibres
\[
 \operatorname{Diff}^1_\theta(E,E)\times_{\Spec A}\Spec A_C
 \simeq
 \operatorname{Diff}^1_{\theta_C}(E_C,E_C).
\]
\end{lemma}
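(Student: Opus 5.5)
We reduce the statement to base change for first principal parts of $E$. Recall that the Atiyah complex sits in the Atiyah sequence
\[
 \RHom_A(E,E)\longrightarrow\operatorname{At}_{A/\kk}(E)\xto{\sigma_E}T_{A/\kk},
\]
and that it may be described as $\RHom_A(P^1_{A/\kk}(E),E)$, the derived dual of the first principal-parts complex. This complex fits into the triangle
\[
 \Omega^1_{A/\kk}\otimes^{\mathbb L}_AE\to P^1_{A/\kk}(E)\to E .
\]
For \'etale $B\to C$, the map $A\to A_C$ is \'etale, because it is a base change of an \'etale map. Hence $A_C\otimes_A\Omega^1_{A/\kk}\simeq\Omega^1_{A_C/\kk}$, since $\bbL_{A_C/A}\simeq0$ and the transitivity triangle degenerates. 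Likewise $A_C\otimes_A^{\mathbb L}T_{A/\kk}\simeq T_{A_C/\kk}$, since $\Omega^1_{A/\kk}$ is finite projective ($A$ being smooth of finite type). These identifications carry the universal derivation $d_A$ to $d_{A_C}$ on the image of $A$.

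\emph{First step.} Show that $A_C\otimes_A^{\mathbb L}P^1_{A/\kk}(E)\simeq P^1_{A_C/\kk}(E_C)$. We write $P^1_{A/\kk}(E)=P^1_{A/\kk}\otimes_AE$, where $P^1_{A/\kk}=(A\otimes_\kk A)/I^2$ and the tensor is taken over the right $A$-structure. Base change along $A\to A_C$ acts on the left structure. For an \'etale map the classical isomorphism
\[
 A_C\otimes_AP^1_{A/\kk}\otimes_AA_C\cong P^1_{A_C/\kk}
\]
holds, and the two $A_C$-structures agree after one extension because $\Omega^1$ commutes with \'etale base change. We compare the extension triangles: the base change of the triangle for $E$ maps to the triangle for $E_C$. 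The outer terms are equivalences, since $E_C$ is a base change and $\Omega^1$ is stable under \'etale base change. The five lemma then gives the middle equivalence.

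\emph{Second step.} Dualise. Because $E$ is perfect and $P^1_{A/\kk}(E)$ is perfect, we have
\[
 A_C\otimes^{\mathbb L}_A\RHom_A(P^1(E),E)\simeq\RHom_{A_C}(P^1(E)_C,E_C).
\]
This holds because $A\to A_C$ is flat and $\RHom$ out of a perfect complex commutes with flat base change. The symbol map is dual to $\Omega^1\otimes E\to P^1(E)$, restricted along the identity component. It is therefore intertwined by naturality of the triangle in the first step.

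\emph{Third step.} Pass to fibres. The fibre sequence $\RHom_A(E,E)\to\operatorname{At}\to T$ is preserved by the exact functor $A_C\otimes_A^{\mathbb L}-$. Taking homotopy fibres over $\theta$ and over $\theta_C=1\otimes\theta$ therefore yields the equivalence of affine fibres. Each fibre is a torsor under $\RHom_A(E,E)$, respectively under its base change $\RHom_{A_C}(E_C,E_C)$. Here $\times_{\Spec A}\Spec A_C$ is read as base change of the torsor, i.e.\ $A_C\otimes_A-$ applied to the fibre sequence.

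\textbf{Main obstacle.} The main difficulty is the first step in the derived setting. $P^1(E)$ is a bimodule, so we must check that the left base change agrees with the intrinsic $A_C$-linear principal parts, and not merely with the underlying complexes. We plan to handle this by first proving the equivalence for $E=A$, using the \'etale diagonal: $A_C\otimes_AA_C\simeq A_C\times(\text{complement})$, so $I^2$-truncations match. We then extend to perfect $E$ by exactness and thick-subcategory closure, since both sides are exact in $E$ and agree on $A$.
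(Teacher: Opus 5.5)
Your proposal is correct and is essentially the paper's argument: identify the Atiyah complex with the scalar-symbol part of the dual of the first principal-parts extension, use $\bbL_{A_C/A}\simeq 0$ and the transitivity triangle to base-change $\Omega^1$ and hence $P^1(E)$, dualise using perfectness of $E$ and flatness of $A\to A_C$, and pass to homotopy fibres. One slip to fix: the displayed two-sided formula $A_C\otimes_A P^1_{A/\kk}\otimes_A A_C\cong P^1_{A_C/\kk}$ is false unless $A\to A_C$ is an open immersion, because it retains the off-diagonal factor of $A_C\otimes_A A_C$; however, your five-lemma comparison only needs, and in fact proves, the one-sided isomorphism $A_C\otimes_A P^1_{A/\kk}(E)\simeq P^1_{A_C/\kk}(E_C)$ of left $A_C$-modules.
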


\begin{proof}
The Atiyah complex is dual to the first principal-parts extension of \(E\). Since \(A\to A_C\) is \'etale, its relative cotangent complex vanishes and the transitivity triangle identifies
\(\bbL_{A_C/\kk}\) with
\(A_C\otimes_A^{\mathbb L}\bbL_{A/\kk}\). The first principal-parts extension therefore base-changes to the corresponding extension over \(A_C\). Perfectness of \(E\) permits dualisation and gives the equivalence of Atiyah complexes. Homotopy fibres commute with this flat derived base change, which proves the assertion for prescribed symbols.
\end{proof}

\begin{proposition}\label{prop:base-change-D}
Assume \(\g\) is perfect over \(A\). For every \'etale morphism \(B\to C\), with
\(\g_C=A_C\otimes_A\g\), there is a natural equivalence of dg-Lie algebras
\[
 C\otimes_B^{\mathbb L}D^1_{\bas}(\g/B)
 \simeq
 D^1_{\bas}(\g_C/C),
\]
compatible with the symbol, the bracket and the inner map. Hence
\[
 C\otimes_B^{\mathbb L}\Uder(\g/B)
 \simeq
 \Uder(\g_C/C)
\]
as crossed controllers.
\end{proposition}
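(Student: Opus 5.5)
The plan is to exhibit $D^1_{\bas}(\g/B)$ as a homotopy fibre product built from Atiyah complexes, where Lemma~\ref{lem:first-order-etale-base-change} already gives base change, and then to check that étale base change preserves the remaining equations (anchor compatibility, bracket derivation, basic symbol). Since $B\to C$ is étale, it is flat, so $C\otimes^{\mathbb L}_B(-)=C\otimes_B(-)$ on the strict models. Also $A\to A_C$ is étale and $\g_C=A_C\otimes_A\g$ is again cofibrant strictly perfect. First I record the diagram
\[
 D^1_{\bas}(\g/B)=\operatorname{At}^{\mathrm{Lie}}_{A/\kk}(\g)\times_{A\otimes_BT_{B/\kk}}T_{B/\kk},
\]
where $\operatorname{At}^{\mathrm{Lie}}_{A/\kk}(\g)\subset\operatorname{At}_{A/\kk}(\g)$ is the subcomplex cut out by two conditions: $\delta$ is a derivation of the bracket, and $\rho\circ\delta=[\theta,\rho(-)]$. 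The fibre product is taken along $q\circ\sigma$ and $\xi\mapsto 1\otimes\xi$.

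\emph{Step 1 (the comparison map).} For $c\in C$ and $(\theta,\delta,\xi)$, I would define $c\otimes(\theta,\delta,\xi)\mapsto(c\theta_C,c\delta_C,c\xi_C)$. Here $\theta_C$ and $\xi_C$ are the unique étale extensions of the derivations to $A_C$ and $C$, and $\delta_C(a'\otimes x)=\theta_C(a')x+a'\delta(x)$ is the induced first-order operator on $\g_C$. I would then verify three things on generators. The bracket on $\g_C$ is the unique Lie--Rinehart extension, so $\delta_C$ is again a bracket derivation. The identity $\rho_C\delta_C=[\theta_C,\rho_C]$ holds because both sides are first-order in $A_C$ and agree on $\g$. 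Basicness is preserved because $q_C(\theta_C)=1\otimes\xi_C$. Compatibility with brackets and with the inner map $\iota$ follows, since $\operatorname{ad}_{x}$ base-changes to $\operatorname{ad}_{1\otimes x}$ and $\rho(x)_C=\rho_C(1\otimes x)$.

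\emph{Step 2 (equivalence).} By Lemma~\ref{lem:first-order-etale-base-change}, $A_C\otimes_A\operatorname{At}_{A/\kk}(\g)\simeq\operatorname{At}_{A_C/\kk}(\g_C)$, and this intertwines symbols. The Lie conditions are kernels of maps between $A$-modules of multilinear operators, for instance $\operatorname{At}\to\operatorname{Hom}(\Lambda^2\g,\g)$ given by the failure of the bracket derivation rule. Because $\g$ is perfect, these targets base-change correctly, so flatness lets the kernels commute with $A_C\otimes_A-$. Next, $A\otimes_BT_{B/\kk}$ base-changes to $A_C\otimes_CT_{C/\kk}$ because $T_{C/\kk}=C\otimes_BT_{B/\kk}$ for étale $B\to C$. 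Finally, $C\otimes_B-$ commutes with the fibre product by flatness. Combining these, I need $C\otimes_B(\text{an }A\text{-module})\cong A_C\otimes_A(-)$ on the $A$-module factors and $C\otimes_B T_{B/\kk}=T_{C/\kk}$ on the base factor. The map of Step~1 is then an isomorphism of strict models, hence an equivalence.

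\emph{Step 3 (controllers).} The crossed module is the pair $[\g\to D^1_{\bas}]$, with action through $\delta$. Base change gives $C\otimes_B\g\cong\g_C$ as $A_C$-modules, and Step~1 is compatible with $\iota$ and with the action. So the two-term $L_\infty$-algebroids agree, and the same holds for $\Uder$.

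I expect the main obstacle to be Step~2: the kernel conditions are not $A$-linear. The anchor and bracket conditions involve the symbol in a first-order way, so commuting them with base change is not literally exactness of $A_C\otimes_A-$. My first attempt would be to rewrite each condition as the kernel of an $A$-linear map out of the principal-parts/Atiyah complex, namely the failure of the Leibniz rule, which is $A$-bilinear once the symbol term is subtracted. If that fails, I would check the conditions étale-locally using the uniqueness of étale extensions of derivations.
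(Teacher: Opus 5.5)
Your route is the paper's: Lemma~\ref{lem:first-order-etale-base-change} for the underlying operators, the Lie conditions as kernel conditions commuting with flat base change, projectability via \(T_{C/\kk}\cong C\otimes_BT_{B/\kk}\), and functoriality for the bracket and inner map. Step~1 is fine, provided you check anchor compatibility before the bracket condition, for the reason below. The gap is in Step~2, at exactly the point you flagged, and neither of your remedies closes it.

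For \(D=(\theta,\delta)\in\operatorname{At}_{A/\kk}(\g)\) put \(K_D(x)=\rho(\delta x)-[\theta,\rho(x)]\) and \(J_D(x,y)=\delta[x,y]-[\delta x,y]-[x,\delta y]\). Up to Koszul signs, \(K_{aD}(x)=aK_D(x)+\rho(x)(a)\,\theta\) and \(J_{aD}(x,y)=aJ_D(x,y)+\rho(y)(a)\,\delta x-\rho(x)(a)\,\delta y\). So \(D^1_{A/\kk}(\g)\) is not an \(A\)-submodule of the Atiyah complex. Consequently \(A_C\otimes_A D^1_{A/\kk}(\g)\) is not even defined, and ``flatness lets the kernels commute with \(A_C\otimes_A-\)'' has no content.

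Subtracting the symbol term makes the defects \(A\)-linear in the arguments \(x,y\), not in the operator \(D\). A kernel of an \(A\)-linear map out of \(\operatorname{At}_{A/\kk}(\g)\) would be an \(A\)-submodule, so no rewriting of that kind can succeed. Your fallback via uniqueness of \'etale extensions only shows that the Step~1 map is well defined and injective. The substantive direction is that every basic Lie derivation of \(\g_C\) is a \(C\)-linear combination of base-changed ones, and that needs exactness.

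The missing observation is that the correction terms vanish for \(a=b\in B\), because \(\rho(x)\in T_{A/B}\) kills \(B\). This is why \(D^1_{\bas}(\g/B)\) is only a \(B\)-module (Lemma~\ref{lem:base-LR-controller}), and why the statement is phrased with \(C\otimes_B-\). So present \(D^1_{\bas}(\g/B)\) as an iterated kernel of \(B\)-linear maps:
\begin{enumerate}
\item First, \(D\mapsto K_D\in\operatorname{Hom}_A(\g,T_{A/\kk})\); note \(K_D(ax)=aK_D(x)\).
\item Then, only on \(\ker K\), the bracket defect with values in \(\operatorname{Hom}_A(\Lambda^2_A\g,\g)\). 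This is needed because \(J_D(x,ay)=aJ_D(x,y)-K_D(x)(a)\,y\): on all of \(\operatorname{At}_{A/\kk}(\g)\) the bracket defect is not \(A\)-bilinear, so your target \(\operatorname{Hom}(\Lambda^2\g,\g)\) is the wrong one there.
\item Finally, \((\theta,\delta,\xi)\mapsto q(\theta)-1\otimes\xi\).
\end{enumerate}
Flatness of \(B\to C\) commutes \(C\otimes_B-\) with these kernels. Every term is an \(A\)-module, so \(C\otimes_BM\cong A_C\otimes_AM\). Perfectness of \(\g\), Lemma~\ref{lem:first-order-etale-base-change} and \(T_{A_C/\kk}\cong A_C\otimes_AT_{A/\kk}\) then identify the terms with those for \(\g_C\).

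You must still check that the base-changed defect maps are the defect maps of \(\g_C\). This again uses \(\rho_C(x)(c)=0\) for \(c\in C\). With this correction your proof is a fleshed-out version of the paper's brief argument, whose ``commute with flat base change'' implicitly means base change along \(B\to C\).
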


\begin{proof}
The underlying first-order operators base-change by Lemma~\ref{lem:first-order-etale-base-change}. Preservation of the dg-Lie bracket and compatibility with the anchor are homotopy-fibre conditions on morphisms between perfect complexes, hence commute with flat base change. Since
\(C\otimes_BT_{B/\kk}\simeq T_{C/\kk}\), the projectability condition on the symbol also base-changes. The commutator bracket and the inner map are functorial, giving the asserted equivalence of crossed controllers.
\end{proof}

\begin{corollary}\label{cor:base-change-unf}
Derived flat splitting spaces and strict unfolding groupoids commute with \'etale base change. In the effective case the same holds for the ordinary quotient whenever it is a strict model of the homotopy cofibre.
\end{corollary}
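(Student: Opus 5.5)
The plan is to deduce both assertions of the corollary from Proposition~\ref{prop:base-change-D}, using invariance of derived mapping spaces under equivalences of targets together with a compatibility of the base tangent algebroid with \'etale base change. For derived flat splitting spaces, I would first note that since \(B\to C\) is \'etale, \(\bbL_{C/B}\simeq0\), so \(T_{C/\kk}\simeq C\otimes_BT_{B/\kk}\) as Lie algebroids; this is the Lie--Rinehart structure induced by the unique extension of derivations along étale maps. Scalar extension \(C\otimes_B^{\mathbb L}(-)\) is therefore a functor
\[
 (\mathrm{LieAlgd}^{\infty}_{B})_{/T_{B/\kk}}\longrightarrow(\mathrm{LieAlgd}^{\infty}_{C})_{/T_{C/\kk}},
\]
and it sends \(T_{B/\kk}\) to \(T_{C/\kk}\). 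Hence it induces a map
\[
 \operatorname{Flat}^{\mathrm{der}}(T_{B/\kk},\Uder(\g/B))\otimes_BC
 \longrightarrow
 \operatorname{Flat}^{\mathrm{der}}(T_{C/\kk},C\otimes^{\mathbb L}_B\Uder(\g/B)),
\]
where the left side is understood as the sheafified/étale-local value. Composing with the equivalence of Proposition~\ref{prop:base-change-D} identifies the target with \(\operatorname{Flat}^{\mathrm{der}}(T_{C/\kk},\Uder(\g_C/C))\). To see that this is an equivalence, I would argue via the Maurer--Cartan description of Section~5: the mapping space out of \(T_{B/\kk}\) is computed by the total CE complex \(\Omega^\bullet_{B/\kk}\otimes_B(-)\) with coefficients in the controller. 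Both \(\Omega^\bullet\) and the controller base-change flatly, so the corresponding formal neighbourhoods and MC spaces agree.

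For strict unfolding groupoids, I would invoke Theorem~\ref{thm:higher}, which identifies \(\operatorname{Unf}^{\tr}_{\mathrm{str}}(\g/B)\) with \(\operatorname{Flat}_{\mathrm{str}}(T_{B/\kk},\Uder(\g/B))\), whose nerve is the one-truncation of the derived space. Truncation commutes with equivalences, so the first part transfers. At the strict level one can also check this directly: an unfolding \(\widetilde\g\) base-changes to \(A_C\otimes_A\widetilde\g\). The exact sequence \(0\to\g\to\widetilde\g\to A\otimes_BT_{B/\kk}\to0\) stays exact by flatness of \(A\to A_C\), and its quotient is \(A_C\otimes_CT_{C/\kk}\). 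The bracket extends by the Lie--Rinehart formula of Lemma~\ref{lem:reconstruction-basic-extension}. The adjoint-symbol construction commutes with this, since \(\Theta_{\widehat\xi}\) is defined by commutators that base-change.

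In the effective case, if \(\uuder(\g/B)\) strictly models \(\hocofib(\iota)\), then flat base change preserves the short exact sequence \(0\to\g\to D^1_{\bas}\to\uuder\to0\). Consequently \(C\otimes_B\uuder(\g/B)\simeq\uuder(\g_C/C)\), and Theorem~\ref{thm:strict-affine} transports. The main obstacle I expect is justifying that scalar extension is compatible with mapping spaces out of \(T_{B/\kk}\): this is not formal for non-perfect sources. I would handle it by using that \(T_{B/\kk}\) is finite projective because \(B\) is smooth, so the CE/MC model is a finite-type computation that commutes with flat base change.
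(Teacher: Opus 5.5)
Your main line coincides with the paper's proof. Proposition~\ref{prop:base-change-D} gives $C\otimes_B^{\mathbb L}\Uder(\g/B)\simeq\Uder(\g_C/C)$ compatibly with the anchors, and $T_{C/\kk}\simeq C\otimes_BT_{B/\kk}$. Invariance of derived mapping spaces under equivalences of source and target then identifies the splitting space of $\g_C/C$ with the one computed from the base-changed controller. For strict unfoldings, you check directly that $A_C\otimes_A\widetilde\g$, its basic part and the adjoint-symbol map are compatible with flat base change. This is a more explicit version of the paper's remark that the basic part and the scalar-extension reconstruction are built from operations compatible with flat descent. Your effective-case argument is also fine: $0\to\g\to D^1_{\bas}(\g/B)\to\uuder(\g/B)\to0$ stays exact after $C\otimes_B-$.

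You should, however, drop the step beginning ``To see that this is an equivalence'' and the ``main obstacle'' paragraph. They try to prove a statement that is not well posed, and the argument offered for it is wrong.
\begin{itemize}
\item $\Flat^{\mathrm{der}}(T_{B/\kk},\Uder(\g/B))$ is a space, not a $B$-module, so ``$\otimes_BC$'' of it has no meaning.
\item The Maurer--Cartan description of Section~5 does not compute the whole mapping space. It presupposes a reference flat splitting $s_0$, has coefficients in the vertical kernel, and at the derived level only governs the formal neighbourhood of $s_0$.
\item What commutes with flat base change is linear data: the complex $\RHom_B(T_{B/\kk},-)$, using finite projectivity of $T_{B/\kk}$, and the deformation complex $\mathfrak C_{s_0}$. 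Maurer--Cartan sets and spaces do not.
\end{itemize}
In particular, the restriction map from the $B$-level splitting space to the $C$-level one is not an equivalence in general. Already for a Zariski localisation $C=B_f$, a flat splitting over $C$ need not come from one over $B$. The statement in which local values glue to global ones is \'etale descent, which is the separate Theorem~\ref{thm:descent-controller}.

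The corollary only asserts that the $C$-level splitting space and strict groupoid are computed by the base-changed controller and the base-changed unfolding. Your first paragraph, up to the composition with Proposition~\ref{prop:base-change-D}, already proves this. It needs only the existence of the scalar-extension functor, which induces maps on mapping spaces, and no finiteness argument about the source $T_{B/\kk}$. Perfectness is used only inside Proposition~\ref{prop:base-change-D}.
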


\begin{proof}
Apply Proposition~\ref{prop:base-change-D} and the functoriality of derived mapping spaces. Strict unfoldings commute with base change because the fibre-product basic part and the scalar-extension reconstruction are formed from operations compatible with flat descent.
\end{proof}

\begin{theorem}\label{thm:descent-controller}
Let \(\pi\from X\to S\) be smooth, with \(X\) and \(S\) smooth schemes, and let \(\cF/S\) be locally strictly CE-presented by perfect models. On the small \'etale site of \(S\), the intrinsic crossed controllers form a stack of \(L_\infty\)-algebroids, and their derived flat splitting spaces satisfy \'etale descent. The one-truncations classify the descended strict unfoldings.
\end{theorem}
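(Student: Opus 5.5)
The plan is to build the stack of controllers from the affine charts and the étale base-change results already established, and then to deduce descent of splitting spaces formally.

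\emph{Step 1: restriction to affine opens.} Work on the small étale site of \(S\). For an étale map \(\Spec C\to S\), restrict to affine opens \(\Spec B\subset S\). Choose an affine étale (Zariski) cover \(\Spec A_i\to X\) of \(\pi^{-1}(\Spec B)\) on which \(\cF/S\) is strictly CE-presented by a cofibrant strictly perfect \(\g_i\). Put \(\Uder(\g_i/B)=[\g_i\to D^1_{\bas}(\g_i/B)]\). By Proposition~\ref{prop:intrinsic-comparison}, each of these represents the restriction of the intrinsic controller \(\Uder(\cF/S)\).

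\emph{Step 2: presentation independence on overlaps.} On overlaps the two restricted presentations \(\g_i|_{A_{ij}}\) and \(\g_j|_{A_{ij}}\) are two CE presentations of the same foliation. Theorem~\ref{thm:presentation-independence} gives canonical equivalences of two-term \(L_\infty\)-algebroids between their controllers. Higher coherences on triple and further overlaps come from Proposition~\ref{prop:homotopy-invariance-der}: the equivalences are induced by equivalences of formal moduli problems \(\mathcal Aut^{\gm}_{\bas}\), which are functorial. Hence they assemble into a cosimplicial diagram in \(\mathrm{LieAlgd}^\infty\).

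\emph{Step 3: the stack condition.} For the stack property over \(S\), use Proposition~\ref{prop:base-change-D}, which gives \(C\otimes^{\mathbb L}_B\Uder(\g/B)\simeq\Uder(\g_C/C)\) for étale \(B\to C\). The presheaf \(C\mapsto\Uder(\g_C/C)\) is then the quasi-coherent object attached to a complex of \(B\)-modules, at least at the level of underlying complexes. Since \(\g\) is perfect and \(D^1_{\bas}\) is built from Atiyah complexes via homotopy fibres, fpqc descent for quasi-coherent complexes shows that the underlying complexes satisfy étale hyperdescent. The \(L_\infty\)-structure maps are compatible with base change by the same proposition. Because the forgetful functor from \(L_\infty\)-algebroids to complexes over \(T_B\) is conservative and preserves the relevant limits, the controllers form a stack.

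\emph{Step 4: descent of splitting spaces and truncation.} The mapping space \(\Map_{/T}(T_{B/\kk},-)\) sends limits to limits, and \(T\) itself descends because \(C\otimes_BT_{B/\kk}\simeq T_{C/\kk}\). Therefore \(\operatorname{Flat}^{\mathrm{der}}\) satisfies étale descent; see also Corollary~\ref{cor:base-change-unf}. For the strict statement, Theorem~\ref{thm:higher} identifies \(\tau_{\le1}\) of the local spaces with the strict unfolding groupoids. Strict unfoldings are sheaves of dg-Lie algebroids with descent data for étale maps, by faithfully flat descent of modules. The descended groupoid is then the homotopy limit of the one-truncated local groupoids. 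One checks that \(\tau_{\le1}\) of the limit agrees with the limit of the \(\tau_{\le1}\)'s. This holds because the limit of 1-types is a 1-type and the comparison map is an equivalence on \(\pi_0\) and \(\pi_1\) by the strict gluing.

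\emph{Main obstacle.} I expect the hardest part to be Step 3: showing that the full \(L_\infty\)-algebroid structure, and not only the underlying complexes, descends. This is delicate because \(D^1_{\bas}\) is defined with a fixed basic symbol, which gives an affine condition rather than a module. I would first handle this through the full Atiyah complex as in Lemma~\ref{lem:first-order-etale-base-change}, imposing the basicness condition as a homotopy pullback of descending objects. A secondary subtlety is that \(\tau_{\le1}\) does not in general commute with totalizations. I would address it by checking directly that, for this diagram of 1-types, no \(\lim^1\) correction to \(\pi_1\) arises.
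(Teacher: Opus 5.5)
Your architecture is the paper's: local CE charts, identification of the controllers on overlaps, descent of the controllers, and descent of the splitting spaces because mapping spaces into a descended object glue. In Step~2 you are more careful than the paper, which cites only Proposition~\ref{prop:base-change-D} on double overlaps. Comparing two different presentations on an overlap of charts in $X$ genuinely needs Theorem~\ref{thm:presentation-independence}, with coherence supplied by the functoriality in Proposition~\ref{prop:homotopy-invariance-der}. One minor point: since $\pi^{-1}(\Spec B)$ need not be affine, in Step~3 you should apply base change chartwise and commute it with the finite \v{C}ech limit of Step~2; flatness of $B\to C$ handles this.

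The genuine gap is at the end of Step~4.
\begin{itemize}
\item[] Strict gluing together with Theorem~\ref{thm:higher} on each chart identifies the descended strict groupoid with $\lim_n\tau_{\le1}\operatorname{Flat}^{\mathrm{der}}_n$. It does not identify it with $\tau_{\le1}\lim_n\operatorname{Flat}^{\mathrm{der}}_n$. Saying that the comparison map between these is bijective on $\pi_0$ and $\pi_1$ ``by the strict gluing'' assumes exactly what has to be shown.
\item[] The local spaces are not $1$-types. By Proposition~\ref{prop:central-stabiliser-complex}, their loop spaces are governed by the whole complex $\Tot C^\bullet_{\mathrm{CE}}(T_{B/\kk};\z(\g)_s)$. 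For instance, horizontal sections of $\z(\g)_s$ contribute to $\pi_2$, and the internal dg structure contributes further.
\item[] In the Bousfield--Kan spectral sequence of the totalisation, the groups $\check H^{t-1}(\pi_t)$ and $\check H^{t}(\pi_t)$ with $t\ge2$ contribute to $\pi_1$ and $\pi_0$ of the limit. The groups $\check H^{t+1}(\pi_t)$ obstruct lifting points of $\lim_n\tau_{\le1}$.
\item[] The coefficient sheaves $\pi_t$ are of horizontal-section or de Rham type, not quasi-coherent. So nothing forces these \v{C}ech groups to vanish, especially since $S$ is not assumed affine.
\end{itemize}
Your proposed remedy, ruling out a $\lim^1$ term for a diagram of $1$-types, is aimed at the wrong object. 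A limit of $1$-types is harmless; the discrepancy comes from the higher homotopy of the untruncated diagram.

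The paper does not commute truncation with the limit at all. It obtains the last sentence of the theorem by applying Proposition~\ref{prop:strict-presentation-crossed} directly to the descended controller. To close the gap you can take one of three routes: argue that way; add a hypothesis that makes the local spaces $1$-truncated or kills the relevant \v{C}ech groups; or read the final claim as a statement about $\lim_n\tau_{\le1}$.
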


\begin{proof}
Choose an \'etale affine cover carrying cofibrant perfect CE presentations. Proposition~\ref{prop:base-change-D} identifies the controllers on double overlaps, and functoriality gives the cocycle coherence on higher overlaps. Thus the local controllers descend in the localised \(\infty\)-category. Mapping stacks into a descended object satisfy descent, so the derived splitting spaces glue. The reconstruction of strict unfoldings uses fibre products, perfect tensor products and the relation module \(\cR\), all compatible with \'etale descent. The resulting strict groupoid is the one-truncation of the descended derived mapping space by Proposition~\ref{prop:strict-presentation-crossed}.
\end{proof}

\begin{remark}
For a non-\'etale change of base, a vector field on the new base need not be pulled back from the old one. The comparison then involves the relative tangent complex of the base morphism; no strict base-change assertion of the preceding form is intended.
\end{remark}

\section{The deformation complex of a fixed unfolding}\label{sec:def-complex}

Fix a flat splitting
\(s\from T_{B/\kk}\to\uuder(\g/B)\) in the effective case and let
\[
 \kfr_s:=\ker(a_{\uuder}).
\]
The adjoint action
\(\nabla^s_\xi(k)=[s(\xi),k]\) is a flat dg connection. The relevant deformation complex is the total Chevalley--Eilenberg dg-Lie algebra
\[
 \mathfrak C_s
 :=\Tot C^\bullet_{\mathrm{CE}}
 \bigl(T_{B/\kk};\kfr_s\bigr)
 \simeq
 \Tot\bigl(\Omega^\bullet_{B/\kk}\otimes_B\kfr_s\bigr).
\]
For \(\alpha\in\Omega^p_{B/\kk}\otimes_B\kfr_s^q\), put
\[
 |\alpha|_{\mathrm{tot}}=p+q,
 \qquad
 D_s\alpha
 =d_{\mathrm{CE},\nabla^s}\alpha
  +(-1)^p d_{\kfr_s}\alpha.
\]
The bracket is
\[
 [\omega\otimes x,\eta\otimes y]
 =(-1)^{|x|\deg(\eta)}
  (\omega\wedge\eta)\otimes[x,y].
\]
The flatness of \(s\) implies \(D_s^2=0\).

\begin{proposition}\label{prop:MC-def}
The formal derived deformation problem of the unfolding \(s\) is governed by \(\mathfrak C_s\). Its Maurer--Cartan elements are the total degree-one solutions of
\[
 D_s\alpha+\frac12[\alpha,\alpha]=0.
\]
A strict deformation of the splitting is represented by a solution whose leading component belongs to
\(\Omega^1_{B/\kk}\otimes_B\kfr_s^0\). The tangent and primary obstruction spaces are
\[
 T_s\operatorname{Unf}^{\tr}\simeq H^1(\mathfrak C_s),
 \qquad
 \operatorname{Obs}_s\subset H^2(\mathfrak C_s).
\]
\end{proposition}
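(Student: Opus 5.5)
The plan is to reduce everything to the standard fact that deformations of a morphism of dg-Lie algebroids out of \(T_{B/\kk}\), over the anchor, are controlled by the Chevalley--Eilenberg cochains of \(T_{B/\kk}\) with coefficients in the vertical kernel.

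\emph{Step 1: nearby splittings.} For a nilpotent augmented test cdga \(C\) with maximal ideal \(\mathfrak m_C\), I consider \(C\)-linear splittings
\[
 s_C\from T_{B/\kk}\otimes C\to\uuder(\g/B)\otimes C
\]
reducing to \(s\) modulo \(\mathfrak m_C\). Since \(a_{\uuder}\circ s_C=\id\) and \(a_{\uuder}\circ s=\id\), the difference \(\alpha=s_C-s\) takes values in \(\kfr_s\otimes\mathfrak m_C\). The \(B\)-linearity of both maps makes \(\alpha\) a \(B\)-linear map \(T_{B/\kk}\to\kfr_s\otimes\mathfrak m_C\), that is, an element of \(\Omega^1_{B/\kk}\otimes_B\kfr_s\otimes\mathfrak m_C\); here I use that \(T_{B/\kk}\) is finite projective because \(B\) is smooth. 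This gives the description of strict deformations as elements with leading component in \(\Omega^1_{B/\kk}\otimes_B\kfr_s^0\).

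\emph{Step 2: the equation.} I expand the two conditions on \(s+\alpha\). Compatibility with the differential gives \(d_{\kfr}\alpha=0\). The bracket condition
\[
 [s(\xi)+\alpha(\xi),s(\eta)+\alpha(\eta)]=s([\xi,\eta])+\alpha([\xi,\eta])
\]
expands, using flatness of \(s\), to \(d_{\mathrm{CE},\nabla^s}\alpha+\tfrac12[\alpha,\alpha]=0\). Together these are the Maurer--Cartan equation in \(\mathfrak C_s\) for strict degree-one elements, exactly as in the Maurer--Cartan discussion of Section~5. To pass to the derived problem, I invoke Definition~\ref{def:derived-flat-space}. Mapping spaces out of \(T_{B/\kk}\) in \(\mathrm{LieAlgd}^{\infty}_{B}\) over \(T_{B/\kk}\), near a fixed section, are computed by the Maurer--Cartan space of the convolution algebra \(\Tot C^\bullet_{\mathrm{CE}}(T_{B/\kk};\kfr_s)\), by the relative Hinich--Pridham--Nuiten correspondence \cite{Hinich01,Pridham10,Nuiten}. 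Because \(T_{B/\kk}\) is projective, its CE algebra \(\Omega^\bullet_{B/\kk}\) is already a cofibrant model. Because the controller is a strict dg-Lie algebroid, the convolution algebra is the strict dg-Lie algebra \(\mathfrak C_s\), and no higher \(\ell_k\) appear. Gauge equivalences come from total degree-zero elements, namely \(\kfr_s^0\) together with the relevant \(\Omega^1\otimes\kfr^{-1}\) components, acting by \(D_s\). This identifies the formal moduli problem with \(\mathrm{MC}(\mathfrak C_s\otimes\mathfrak m_C)\).

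\emph{Step 3: tangent and obstruction spaces.} I take \(C=\kk[\eps]/\eps^2\). The equation linearises to \(D_s\alpha=0\), and gauge acts by \(D_s\)-exact terms, so \(T_s\simeq H^1(\mathfrak C_s)\). For a small extension \(0\to I\to C'\to C\to0\), I lift a Maurer--Cartan element arbitrarily. The curvature \(D_s\alpha'+\tfrac12[\alpha',\alpha']\) then lies in \(\mathfrak C_s^2\otimes I\) and is \(D_s\)-closed by the Bianchi identity. Its class in \(H^2(\mathfrak C_s)\otimes I\) vanishes if and only if a lift exists. This gives \(\operatorname{Obs}_s\subset H^2\).

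The main obstacle I expect is in Step 2: justifying that the homotopical mapping space near \(s\) is computed by \(\mathfrak C_s\) built from the strict quotient \(\kfr_s\). This needs \(\uuder(\g/B)\) to model the homotopy cofibre, which I would ensure by using the cofibration hypothesis of Theorem~\ref{thm:presentation-independence}. It also needs the fibrancy and connectivity assumptions of Proposition~\ref{prop:strict-flat-pi0}. My first attempt would be to cite the general convolution-algebra description of mapping spaces under those standing hypotheses.
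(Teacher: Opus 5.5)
Your argument is the paper's own proof written out in more detail. Both go through the strict correction \(s+\alpha\) with vertical \(\alpha\in\Omega^1_{B/\kk}\otimes_B\kfr_s^0\), expand it into \(d_{\kfr}\alpha=0\) plus the curvature equation, pass to all total degree-one elements, linearise, and obtain obstructions from small extensions. Step~1 and the strict half of Step~2 are correct.

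The genuine gap is in the passage to the derived problem. There you take \(\mathrm{MC}(\mathfrak C_s\otimes\mathfrak m_C)\) modulo gauge by \emph{all} total degree-zero elements, and you explicitly include \(\kfr_s^0=\Omega^0_{B/\kk}\otimes_B\kfr_s^0\). The paper's proof makes the same move (``gauge equivalence through total degree-zero elements''). The cofibrancy, fibrancy and connectivity hypotheses you planned to invoke do not address it, since the example below has everything in degree zero and \(\iota\) injective.

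Mapping spaces out of \(T_{B/\kk}\) over \(T_{B/\kk}\) are computed, as for dg-Lie algebras, by the convolution algebra of the \emph{reduced} CE coalgebra. This is because an \(\infty\)-morphism has no curved \(\Sym^0\)-component. The controlling algebra is therefore the \(D_s\)-stable dg-Lie subalgebra \(\mathfrak C_s^+:=\Tot(\Omega^{\ge1}_{B/\kk}\otimes_B\kfr_s)\). The form-degree-zero piece \(C^0_{\mathrm{CE}}=\kfr_s\) does not consist of homotopies between splittings. An element \(m\in\kfr_s^0\) acts by conjugation, \(s\mapsto s-\eps\nabla^s m\). This is a vertical symmetry of the relative foliation which is not the identity on \(\g\), and the rigid convention for morphisms of unfoldings does not divide by such symmetries. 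Likewise, the total-degree-one piece \(\Omega^0_{B/\kk}\otimes_B\kfr_s^1\) is not a deformation of a splitting.

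A concrete counterexample: let \(B=\kk[t]\), \(A=\kk[x,y,t]\) and \(\g=A\partial_x\subset T_{A/B}\).
\begin{enumerate}[label=(\arabic*)]
\item \(D^1_{\bas}(\g/B)\) identifies with \(\{a\partial_x+b(y,t)\partial_y+f(t)\partial_t\}\), \(\g\) is effective, and \(\kfr_s\cong\kk[y,t]\partial_y\) sits in degree zero.
\item Every section \(\partial_t\mapsto\partial_t+\beta(y,t)\partial_y\) is flat. By Theorem~\ref{thm:strict-affine} the unfoldings therefore form the discrete set \(\kk[y,t]\).
\item The tangent space at \(s_0=\partial_t\) is \(\kk[y,t]\), which equals \(H^1(\mathfrak C_{s_0}^+)=\Omega^1_{B/\kk}\otimes_B\kfr_{s_0}\).
\item However, \(D_{s_0}\colon\mathfrak C^0_{s_0}\to\mathfrak C^1_{s_0}\) is \(\partial_t\) on \(\kk[y,t]\partial_y\), which is surjective in characteristic zero, and \(\mathfrak C^2_{s_0}=0\). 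Hence \(H^1(\mathfrak C_{s_0})=0\).
\end{enumerate}

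With the paper's rigid notion of unfolding, the correct conclusions are \(T_s\simeq H^1(\mathfrak C_s^+)\) and obstructions in \(H^2(\mathfrak C_s^+)\). Your Steps~1--3 prove exactly these once \(\mathfrak C_s\) is replaced by \(\mathfrak C_s^+\). The full \(\mathfrak C_s\) is the extension \(0\to\mathfrak C_s^+\to\mathfrak C_s\to\kfr_s\to0\) with connecting map \(\nabla^s\). It governs a different problem: splittings modulo the action of vertical symmetries. The proposition as stated holds only if that quotient problem is taken as the definition of the deformation problem of \(s\).
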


\begin{proof}
A strict correction \(s+\alpha\) has the same anchor precisely when \(\alpha\) is vertical. Compatibility with the internal differential is the component
\(d_{\kfr_s}\alpha=0\), while preservation of the bracket is the horizontal curvature equation. Together these are exactly the total Maurer--Cartan equation. Allowing all total degree-one components gives the derived deformation problem. Linearisation gives the cocycle equation \(D_s\alpha_1=0\), while gauge equivalence through total degree-zero elements changes \(\alpha_1\) by a boundary. The standard recursive Maurer--Cartan calculation places the obstruction to continuation in total degree two \cite{Hinich01,Pridham10}.
\end{proof}

\begin{corollary}\label{cor:rigidity}
If \(H^1(\mathfrak C_s)=0\), the unfolding is infinitesimally rigid. If \(H^2(\mathfrak C_s)=0\), every nilpotent infinitesimal deformation extends formally, subject to the usual completeness hypothesis on the Maurer--Cartan filtration.
\end{corollary}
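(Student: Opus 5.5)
The corollary follows directly from Proposition~\ref{prop:MC-def}, which identifies the formal deformation problem of the unfolding \(s\) with the Maurer--Cartan problem of \(\mathfrak C_s\). The tangent space is \(H^1(\mathfrak C_s)\) and the primary obstructions lie in \(H^2(\mathfrak C_s)\). We treat the two assertions separately, working with deformations over an Artinian (nilpotent augmented) local \(\kk\)-algebra \((C,\mathfrak m_C)\). In every case we reduce to small extensions
\[
0\to I\to C'\to C\to 0,\qquad \mathfrak m_{C'}I=0.
\]

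\emph{Rigidity.} First we check that \(H^1(\mathfrak C_s)=0\) kills first-order deformations. A first-order deformation over \(\kk[t]/t^2\) is a total degree-one cocycle \(t\alpha_1\). Gauge equivalence by a total degree-zero element \(t\beta\) changes \(\alpha_1\) by \(D_s\beta\). So first-order deformations up to gauge are \(H^1(\mathfrak C_s)=0\), and infinitesimal rigidity, which is the statement about the tangent space, follows at once.

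To get rigidity over arbitrary Artinian bases as well, we induct along small extensions. Two Maurer--Cartan lifts of the same element over \(C\) differ, after the standard affine-torsor argument, by a class in \(H^1(\mathfrak C_s)\otimes I\). This class vanishes, so the lifts are gauge equivalent. Hence every Maurer--Cartan element over \(C\) is gauge-trivial. If "infinitesimally rigid" is meant only at the tangent level, this stronger induction is not needed.

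\emph{Formal extension.} Let \(\alpha\) solve the Maurer--Cartan equation over \(C\) and pick any lift \(\alpha'\) over \(C'\). The curvature
\[
F(\alpha')=D_s\alpha'+\tfrac12[\alpha',\alpha']
\]
lies in \(\mathfrak C_s\otimes I\). The Bianchi identity \(D_sF+[\alpha',F]=0\) reduces modulo \(\mathfrak m_{C'}\) to \(D_sF=0\), because \([\alpha',F]\in\mathfrak m_{C'}I=0\). Changing the lift by \(\gamma\in\mathfrak C_s^1\otimes I\) changes \(F\) by \(D_s\gamma\). So the class \([F]\in H^2(\mathfrak C_s)\otimes I\) is the obstruction. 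It vanishes by hypothesis, so we may correct the lift to a solution over \(C'\). Factoring any surjection of Artinian algebras into small extensions and inducting shows that every nilpotent deformation extends.

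For a formal (pro-Artinian) base we pass to the inverse limit of compatible lifts. This limit is a solution only if \(\mathfrak C_s\otimes\widehat C\) is complete for the filtration induced by \(\mathfrak m\) and the Maurer--Cartan series converges. That is exactly the stated completeness hypothesis, so we assume it rather than prove it.

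The only delicate point is the Bianchi computation with the Koszul signs of the total grading \(|\alpha|_{\mathrm{tot}}=p+q\). It requires \(D_s\) to be a derivation of the bracket with \(D_s^2=0\), which is the flatness of \(s\) recorded above. Everything else is the standard obstruction calculus of \cite{Hinich01,Pridham10}.
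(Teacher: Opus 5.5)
Your proposal is correct and takes essentially the same approach as the paper, whose proof is the single remark that the corollary is the standard tangent--obstruction consequence of Proposition~\ref{prop:MC-def}. You have unpacked that standard argument: first-order gauge classes form \(H^1(\mathfrak C_s)\); the Bianchi identity makes the curvature of a lift a \(D_s\)-cocycle in \(\mathfrak C_s\otimes I\) with class in \(H^2(\mathfrak C_s)\otimes I\); you induct over small extensions; and the completeness hypothesis lets you pass to the inverse limit, which is exactly what the paper's appeal to the usual Maurer--Cartan obstruction calculus intends.
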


\begin{proof}
The assertion is the usual tangent-obstruction consequence of Proposition~\ref{prop:MC-def}.
\end{proof}

\begin{remark}
For the crossed controller, the formal neighbourhood of a splitting is governed by the total convolution \(L_\infty\)-algebra obtained by twisting
\(\Map(T_{B/\kk},\Uder(\g/B))\) at that splitting. Its Maurer--Cartan equation includes the higher components associated with central isotropy. The effective dg-Lie algebra \(\mathfrak C_s\) is its strict quotient when the inner action is injective.
\end{remark}

\subsection{The first-order Suwa term}

Let the base direction be generated by \(\partial_t\), and let a codimension-one foliation be generated locally by an integrable one-form \(\omega\). An unfolded generator can be written to first order as
\[
 \widetilde\omega=\omega+t\eta+h\,dt.
\]
If \(\widetilde Y=\partial_t+Y\) is a lifted parameter direction tangent to the unfolded foliation, then at \(t=0\)
\[
 h=-\omega(Y).
\]
Thus \(h\) is the local coefficient of the adjoint-symbol class of the lift.

\begin{proposition}\label{prop:suwa-class}
The coefficient \(h\) is a local representative of the degree-zero transverse-controller class associated with \(\partial_t\). If the conormal generator is changed to
\(\omega'=u\omega\), then the unfolded generator may be chosen so that
\[
 h'=u h.
\]
If the lifted vector field is changed by a vector field \(Z\) tangent to the original foliation, then
\(\omega(Z)=0\) and the coefficient \(-\omega(Y)\) is unchanged. Hence the collection of local coefficients transforms with the conormal line and represents the same controller class. The coefficient of \(dt\) in
\(\widetilde\omega\wedge d\widetilde\omega\) is the linearised Maurer--Cartan equation for this class.
\end{proposition}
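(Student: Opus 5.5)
The plan is to work in a strict local chart and make the adjoint-symbol construction of Theorem~\ref{thm:strict-affine} completely explicit for a codimension-one foliation. Take \(B\) to be a local ring in the parameter \(t\), with \(T_{B/\kk}\) generated by \(\partial_t\), and \(A\) smooth over \(B\). Let \(\g\subset T_{A/B}\) be the relative foliation \(\ker\omega\), with \(\omega\) integrable and \(\omega\wedge d\omega=0\), and let \(\widetilde\omega=\omega+t\eta+h\,dt\) be the unfolded generator. A basic lift of \(\partial_t\) is \(\widetilde\xi=\partial_t+Y\) with \(\widetilde\omega(\widetilde\xi)=0\). Evaluating at \(t=0\) gives \(h+\omega(Y)=0\), which is the displayed formula \(h=-\omega(Y)\). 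The lift determines \(\Theta_{\widehat\xi}=(\partial_t+Y,\operatorname{ad}_{\partial_t+Y}|_\g,\partial_t)\in D^1_{\bas}(\g/B)\). Changing the lift by \(x\in\g\) changes \(\Theta\) by \(\iota(x)\), exactly as in Step~1 of the proof of Theorem~\ref{thm:strict-affine}.

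To show that \(h\) represents the controller class, I would define a pairing on classes of basic derivations with symbol \(\partial_t+Y\) by \([\Theta]\mapsto-\omega(Y)\), taking values in functions, that is, in sections of the conormal line twisted by the choice of \(\omega\). This pairing is well defined on \(\uuder(\g/B)\) because inner derivations have symbols in \(\g=\ker\omega\). This is precisely the second invariance claim: replacing \(Y\) by \(Y+Z\) with \(\omega(Z)=0\) leaves \(-\omega(Y)\) unchanged. Hence \(h\) is a local coordinate of \(s(\partial_t)\).

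For the rescaling claim, take \(\omega'=u\omega\). I would choose the unfolded generator \(\widetilde\omega'=\widetilde u\,\widetilde\omega\), where \(\widetilde u\) is any extension of \(u\). The unfolded foliation, and therefore the lift \(\widetilde\xi\), is unchanged, so \(h'=-\omega'(Y)=u h\) at \(t=0\). Equivalently, the \(dt\)-coefficient of \(\widetilde u\widetilde\omega\) is \(\widetilde u h\). The local coefficients \(h\) therefore glue as a section of the conormal line and define one controller class.

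The remaining step, which I expect to be the main obstacle, is to identify the \(dt\)-component of \(\widetilde\omega\wedge d\widetilde\omega\) with the linearised Maurer--Cartan equation. Write \(d=d_X+dt\wedge\partial_t\) and expand to first order at \(t=0\). The \(dt\)-part is, up to sign,
\[
 dt\wedge\bigl(\omega\wedge\eta-\omega\wedge d_Xh+h\,d_X\omega\bigr).
\]
I would compare this with the condition that \(\widetilde\xi\) acts by a Lie derivation of \(\g\), namely \(L_{\partial_t+Y}\omega\wedge\omega=0\) modulo \(t\). Cartan's formula gives \(L_Y\omega=d(\omega(Y))+\iota_Yd\omega=-dh+\iota_Yd\omega\). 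Using integrability in the form \(\iota_Yd\omega\wedge\omega=\omega(Y)\,d\omega\) (up to sign), this matches the displayed expression, with \(\eta\) playing the role of \(\partial_t\widetilde\omega\). Because the base is one-dimensional, the curvature term \(d_{\mathrm{CE}}\alpha+\tfrac12[\alpha,\alpha]\) in \(\Omega^2_{B}\) vanishes identically. The only surviving component of the Maurer--Cartan equation is therefore the condition that \(\alpha\) lies in the derivation subalgebra, i.e.\ the Lie-derivation condition of \(D^1_{\bas}\) linearised at \(\omega\). The delicate points are the sign conventions and checking that the terms in \(t\eta\) contribute exactly the linear part. I would settle these by direct expansion, rather than by invoking the general deformation complex of Proposition~\ref{prop:MC-def}.
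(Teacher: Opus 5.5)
Your proposal is correct and follows the paper's own argument. It derives \(h=-\omega(Y)\) from \(\widetilde\omega(\partial_t+Y)=0\) at \(t=0\), proves invariance under \(\omega\mapsto u\omega\) (rescaling the unfolded generator by a unit) and under \(Y\mapsto Y+Z\) with \(\omega(Z)=0\), and identifies the \(dt\)-component of the Frobenius equation with the condition that the lift preserves the Pfaff ideal; you make this last step explicit via Cartan's formula and the vanishing of \(\Omega^2_B\), where the paper simply declares it to be the degree-one cocycle equation in the total Chevalley--Eilenberg complex.

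When you carry out the expansion, note two corrections. First, your displayed \(dt\)-part has a relative sign error, not an overall one. The correct \(dt\)-part is \(dt\wedge\bigl(h\,d_X\omega+\omega\wedge d_Xh-\omega\wedge\eta\bigr)\). Using \(\omega\wedge\iota_Y d_X\omega=\omega(Y)\,d_X\omega\), this equals \(\bigl(L_{\partial_t+Y}(\omega+t\eta)\bigr)|_{t=0}\wedge\omega=(\eta-d_Xh+\iota_Yd_X\omega)\wedge\omega\) exactly. Second, \(h'=uh\) means the local \(h\) glue to the section \(-[Y]\) of the normal line, not of the conormal line.
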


\begin{proof}
The identity \(h=-\omega(Y)\) follows from
\(\widetilde\omega(\partial_t+Y)=0\) at \(t=0\). Multiplication of the unfolded generator by a unit with constant term \(u\) multiplies both \(\omega\) and \(h\) by \(u\); terms involving \(t\,dt\) vanish to first order. Replacing \(Y\) by \(Y+Z\), with \(Z\) tangent, leaves \(-\omega(Y)\) unchanged. These are precisely the changes of generator and tangent representative by which the adjoint-symbol class is defined. Finally, expanding the Frobenius equation and taking the \(dt\)-component gives the condition that the lifted first-order derivation preserve the Pfaff ideal. This is the degree-one cocycle equation in the total Chevalley--Eilenberg complex.
\end{proof}

\section{The Atiyah--Kodaira--Spencer class}\label{sec:AKS}

The anchor sequence of the controller is the derived analogue of an Atiyah sequence \cite{Atiyah57}; its connecting morphism is the first obstruction to choosing transverse directions. Curvature is a second, nonlinear obstruction which must be formed in the total Chevalley--Eilenberg complex.

\begin{definition}
Assume that the effective controller exists over \(S\), and set
\[
 \mathfrak K_{\cF/S}
 :=\fib\left(
 \pi_*\uuder(\cF/S)\longrightarrow\bbT_S
 \right).
\]
The exact triangle
\[
 \mathfrak K_{\cF/S}\longrightarrow
 \pi_*\uuder(\cF/S)\longrightarrow
 \bbT_S\xto{\kappa_{\cF/S}}
 \mathfrak K_{\cF/S}[1]
\]
defines the Atiyah--Kodaira--Spencer morphism. In the non-effective theory the same definition uses the homotopy fibre of the crossed-controller anchor.
\end{definition}

\begin{proposition}\label{prop:AKS-splitting}
The class \(\kappa_{\cF/S}\) vanishes if and only if the anchor admits a splitting in the derived category. On an acyclic cover on which the controller is represented by perfect complexes, it is represented by the \v{C}ech cocycle of differences of local derived splittings.
\end{proposition}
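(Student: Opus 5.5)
The first assertion follows from the long exact sequence of the defining triangle. I will apply \(\Hom(\bbT_S,-)\) in the derived category of \(\cO_S\)-modules to
\[
 \mathfrak K_{\cF/S}\to\pi_*\uuder(\cF/S)\xto{a}\bbT_S\xto{\kappa_{\cF/S}}\mathfrak K_{\cF/S}[1].
\]
This gives an exact sequence
\[
 \Hom(\bbT_S,\pi_*\uuder(\cF/S))\xto{a_*}\Hom(\bbT_S,\bbT_S)\xto{(\kappa_{\cF/S})_*}\Hom(\bbT_S,\mathfrak K_{\cF/S}[1]).
\]
A derived splitting is a preimage of \(\id_{\bbT_S}\) under \(a_*\). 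Exactness says such a preimage exists if and only if \((\kappa_{\cF/S})_*(\id)=\kappa_{\cF/S}\) vanishes. No bracket enters here, since the statement concerns splittings in the derived category and not flat ones.

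For the \v{C}ech description I will fix an acyclic cover \(\{V_i\}\) of \(S\) on which \(\pi_*\uuder(\cF/S)\), \(\mathfrak K_{\cF/S}\) and \(\bbT_S\) are represented by perfect complexes, chosen so that the anchor is a degreewise surjection of complexes. Over an affine \(V_i\), \(\bbT_S\) is projective, so \(\Hom\) computed strictly agrees with derived \(\Hom\). Surjectivity of the anchor then gives strict local chain sections \(s_i\) with \(a\circ s_i=\id\). On \(V_{ij}\) the difference \(s_i-s_j\) lands in \(\ker a\), which represents \(\mathfrak K_{\cF/S}\). These differences form a \v{C}ech \(1\)-cocycle \(c_{ij}=s_i-s_j\) with values in \(\mathcal{H}om(\bbT_S,\mathfrak K_{\cF/S})\). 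Changing the \(s_i\) by local vertical maps \(h_i\) changes \(c_{ij}\) by the coboundary \(h_i-h_j\), so the class is well defined.

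To identify \([c]\) with \(\kappa_{\cF/S}\), I will compute the connecting morphism by the standard recipe, using the mapping cone of \(\mathfrak K\to\pi_*\uuder\) as a model for \(\bbT_S\). One lifts \(\id_{\bbT_S}\) locally through \(a\), and the ambiguity in \(\mathfrak K\) on overlaps is the \v{C}ech cocycle. The resulting class lies in \(\check H^1(\mathcal{H}om(\bbT_S,\mathfrak K))\), which on an acyclic cover equals \(\Ext^1(\bbT_S,\mathfrak K)=\Hom(\bbT_S,\mathfrak K[1])\). This identification is the standard comparison of the \v{C}ech total complex with derived global sections.

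I expect the main obstacle to be the sign and shift bookkeeping in this last identification. The controller complexes have internal differentials, so the \v{C}ech class actually lives in the total complex \(\Tot\check C^\bullet(\mathcal{H}om(\bbT_S,\mathfrak K))\). I would first treat the case where everything is concentrated in degree zero, and then argue that the total-complex version differs only by the usual Koszul sign convention. As a fallback when strict local sections do not exist on the chosen models, I would replace \(\pi_*\uuder\) by a model in which the anchor is a fibration.
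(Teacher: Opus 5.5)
Your first part is exactly the paper's argument: the splitting criterion for an exact triangle, obtained by applying $\operatorname{Hom}(\bbT_S,-)$. Your \v{C}ech construction also follows the paper's idea of local splittings whose differences lie in the fibre and satisfy the cocycle identity.

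\textbf{The gap.} One step fails as written. Degreewise surjectivity of the anchor onto the degreewise projective $\bbT_S|_{V_i}$ only yields \emph{graded} local sections $s_i$, not chain sections. For such a graded section, $\tau_i:=d s_i-s_i d$ lands in $\mathfrak K_{\cF/S}$. Its class in $H^1\Gamma(V_i,\mathcal{H}om(\bbT_S,\mathfrak K_{\cF/S}))=\operatorname{Hom}_{D(V_i)}(\bbT_S,\mathfrak K_{\cF/S}[1])$ is, up to sign, $\kappa_{\cF/S}|_{V_i}$. It need not vanish on an affine once $\mathfrak K_{\cF/S}$ has cohomology in positive degree.

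A minimal example: take $P=[\,T\xto{\id}T\,]$ in degrees $0,1$, with $a$ the identity in degree $0$. This is a degreewise surjective chain map onto $T$, but it has no chain section because $Z^0(P)=0$. Here the fibre is $T[-1]$ and $\kappa=\id\neq 0$.

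Your fallback does not help, since fibrations of complexes are precisely the degreewise surjections. For the same reason, passing to the total \v{C}ech complex is not just a Koszul-sign adjustment of the degree-zero case. In general the representative of $\kappa_{\cF/S}$ is the total-degree-one cocycle $(\tau_i,\,s_i-s_j)\in\check C^0(\mathcal{H}om^1)\oplus\check C^1(\mathcal{H}om^0)$, up to the sign convention for the total differential.

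\textbf{The repair.} Take local triviality as a hypothesis rather than deducing it. The statement speaks of \emph{local derived splittings}, and the paper's proof simply chooses local null-homotopies of the connecting morphism. So the cover is implicitly one on which $\kappa_{\cF/S}|_{V_i}=0$.

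Under that assumption your argument goes through:
\begin{itemize}
\item Since $\bbT_S|_{V_i}$ is projective and concentrated in degree $0$, a derived splitting is represented by a chain map $s_i$ with $a\circ s_i=\id$ on the nose, because a homotopy $\bbT_S\to\bbT_S[-1]$ is zero.
\item The differences $s_i-s_j$ are then internal cocycles in $\mathcal{H}om^0(\bbT_S,\mathfrak K_{\cF/S})$.
\item $(0,\,s_i-s_j)$ is the total representative, and your cone computation identifies it with $\kappa_{\cF/S}$.
\end{itemize}
Alternatively, you can keep the general case and work with $(\tau_i,\,s_i-s_j)$ directly.
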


\begin{proof}
This is the usual splitting criterion for an exact triangle. The \v{C}ech representative is obtained by choosing local null-homotopies of the connecting morphism; their differences lie in the homotopy fibre and satisfy the cocycle identity.
\end{proof}

Let \(\sigma\) be a graded splitting of the anchor. Its failure to commute with the internal differential is
\[
 \tau_\sigma(\xi)
 :=d_{\mathfrak u}\sigma(\xi)-\sigma(d_{\bbT_S}\xi),
\]
and its bracket curvature is
\[
 F_\sigma(\xi,\eta)
 :=[\sigma(\xi),\sigma(\eta)]-\sigma([\xi,\eta]).
\]
These have bidegrees \((1,1)\) and \((2,0)\), respectively, and combine into the total curvature
\[
 \mathcal F_\sigma:=\tau_\sigma+F_\sigma
 \in
 \Tot^2 C^\bullet_{\mathrm{CE}}
 \bigl(\bbT_S;\mathfrak K_{\cF/S}\bigr).
\]

\begin{proposition}\label{prop:AKS-bianchi}
The covariant total differential \(D_\sigma\) determined by a graded splitting satisfies
\[
 D_\sigma^2=[\mathcal F_\sigma,-],
 \qquad
 D_\sigma\mathcal F_\sigma=0.
\]
If the vertical kernel is abelian, \(D_\sigma\) is a differential and \(\mathcal F_\sigma\) is a total cocycle.
\end{proposition}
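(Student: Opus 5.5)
The plan is to treat this as the standard curvature/Bianchi identity for a connection built from a non-flat splitting, organised in the total Chevalley--Eilenberg complex. First I make \(D_\sigma\) explicit. On \(\Omega^p_{S}\otimes\mathfrak K^q_{\cF/S}\) (locally, \(\Tot C^\bullet_{\mathrm{CE}}(\bbT_S;\mathfrak K_{\cF/S})\)) put
\[
 D_\sigma\alpha=d_{\mathrm{CE},\nabla^\sigma}\alpha+(-1)^p d_{\mathfrak K}\alpha,
 \qquad
 \nabla^\sigma_\xi k:=[\sigma(\xi),k],
\]
exactly as \(D_{s}\) in Section~\ref{sec:def-complex}, but with the graded splitting \(\sigma\) in place of the flat splitting. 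The bracket on \(\pi_*\uuder(\cF/S)\) is a Lie--Rinehart bracket, and \(\mathfrak K_{\cF/S}\) is an ideal, being the fibre of a bracket-preserving anchor. Hence \(\nabla^\sigma\) is a well-defined connection on \(\mathfrak K\) acting by derivations of the bracket of \(\mathfrak K\). I work on a chart where \(\mathfrak K\) is the strict kernel, as in the effective strict model.

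Next I prove \(D_\sigma^2=[\mathcal F_\sigma,-]\) by splitting \(D_\sigma^2\) into bidegree components.

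The \((2,0)\) component is the square of the CE differential of the connection. The standard computation gives \(d_{\nabla}^2k(\xi,\eta)=[\nabla_\xi,\nabla_\eta]k-\nabla_{[\xi,\eta]}k\). By the Jacobi identity in \(\uuder\) this equals \([F_\sigma(\xi,\eta),k]\).

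The \((1,1)\) component is the graded commutator \(d_{\mathrm{CE},\nabla^\sigma}d_{\mathfrak K}\pm d_{\mathfrak K}d_{\mathrm{CE},\nabla^\sigma}\). On \(k\) it is \([d_{\mathfrak u},\mathrm{ad}_{\sigma(\xi)}]k-\mathrm{ad}_{\sigma(d\xi)}k\). Because \(d_{\mathfrak u}\) is a derivation of the bracket, this equals \([d\sigma(\xi)-\sigma(d\xi),k]=[\tau_\sigma(\xi),k]\).

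The \((0,2)\) component is \(d_{\mathfrak K}^2=0\).

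Summing the three gives \([\tau_\sigma+F_\sigma,-]\). The only delicate point is checking the signs \((-1)^p\) and the Koszul signs of the wedge bracket, so that the components assemble into the total bracket with \(\mathcal F_\sigma\) of total degree two. I would verify this on decomposable elements \(\omega\otimes k\).

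For the Bianchi identity \(D_\sigma\mathcal F_\sigma=0\), I again split into components.

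The \((3,0)\) part is the classical Bianchi identity. Expanding \(d_\nabla F_\sigma(\xi,\eta,\zeta)\), the cyclic sum of \([\sigma(\xi),[\sigma(\eta),\sigma(\zeta)]]\) vanishes by the Jacobi identity, and the terms \(\sigma([\xi,[\eta,\zeta]])\) vanish by the Jacobi identity in \(\bbT_S\). The remaining terms cancel in pairs.

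The \((2,1)\) part pairs \(d_{\mathfrak K}F_\sigma\) with \(d_\nabla\tau_\sigma\). Applying \(d\) to \(F_\sigma\) and using the derivation property yields \([\tau_\sigma(\xi),\sigma(\eta)]\pm[\sigma(\xi),\tau_\sigma(\eta)]\) plus terms that rearrange into \(\tau_\sigma([\xi,\eta])\). This is exactly \(-d_\nabla\tau_\sigma\) up to sign.

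The \((1,2)\) part is \(d_{\mathfrak K}\tau_\sigma=d^2\sigma-\ldots\). It vanishes because \(d^2=0\) on both complexes.

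Alternatively, and more cleanly, I would note that \(D_\sigma\) is the twist of the flat total differential of the semidirect-type structure. Then the Bianchi identity follows formally from \([D_\sigma,D_\sigma^2]=0\) together with the fact that the adjoint map \(\mathrm{ad}\) is injective on the relevant classes. However, that fails when \(\mathfrak K\) has a centre, so I will rely on the direct component computation.

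Finally, if \(\mathfrak K\) is abelian, \([\mathcal F_\sigma,-]=0\). Hence \(D_\sigma^2=0\), and \(D_\sigma\mathcal F_\sigma=0\) says \(\mathcal F_\sigma\) is a total cocycle.

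The main obstacle I expect is the \((2,1)\) sign bookkeeping, together with the need for the statements to hold at chain level, not merely up to homotopy. In the non-strict setting \(\pi_*\uuder\) is only an \(L_\infty\)-object. I will therefore state and prove the identities in a strict dg model of the controller over an affine chart, which is what the definition of \(\mathcal F_\sigma\) implicitly uses.
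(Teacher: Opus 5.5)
Your route is the paper's: split $D_\sigma^2$ and $D_\sigma\mathcal F_\sigma$ by bidegree, and use the Jacobi identity in $\uuder$ together with the fact that $d_{\mathfrak u}$ is a derivation of the bracket. For the Bianchi identity you are more careful than the paper. Its phrase ``applying the same identity once more'', read as $[D_\sigma,D_\sigma^2]=0$, only yields $[D_\sigma\mathcal F_\sigma,-]=0$, which is vanishing modulo the centre of $\mathfrak K_{\cF/S}$, exactly as you observe. Your direct $(3,0)$, $(2,1)$, $(1,2)$ computation is the right way to obtain $D_\sigma\mathcal F_\sigma=0$ itself.

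The sign you postpone is not cosmetic, however, and with the convention you adopt it comes out opposite to what you claim. Take $D_\sigma=d_{\mathrm{CE},\nabla^\sigma}+(-1)^p d_{\mathfrak K}$ and the bracket $[\omega\otimes x,\eta\otimes y]=(-1)^{|x|\deg\eta}(\omega\wedge\eta)\otimes[x,y]$ of Section~\ref{sec:def-complex}, with $T_{B/\kk}$ in degree zero. Because of the factor $(-1)^p$, the $(1,1)$-component of $D_\sigma^2$ on $k\in\mathfrak K$ is $\xi\mapsto\nabla^\sigma_\xi(dk)-d(\nabla^\sigma_\xi k)=-[\tau_\sigma(\xi),k]$. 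This is the negative of the graded commutator you compute. Likewise your $(2,1)$ expansion gives
\[
 d_{\mathfrak K}F_\sigma(\xi,\eta)
 =[\tau_\sigma(\xi),\sigma(\eta)]+[\sigma(\xi),\tau_\sigma(\eta)]-\tau_\sigma([\xi,\eta])
 =(d_{\nabla^\sigma}\tau_\sigma)(\xi,\eta)
\]
with a plus sign. Hence the $(2,1)$-component of $D_\sigma(\tau_\sigma+F_\sigma)$ is $2\,d_{\nabla^\sigma}\tau_\sigma$, which is non-zero in general. For example, take $B=\kk[x,y]$, a dg Lie algebra $\mathfrak l$, $\mathfrak u=T_{B/\kk}\ltimes(B\otimes\mathfrak l)$, and $\sigma(\partial_x)=(\partial_x,yv)$, $\sigma(\partial_y)=(\partial_y,0)$ with $dv\neq0$; then $(d_{\nabla^\sigma}\tau_\sigma)(\partial_x,\partial_y)=-dv$.

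With these conventions both identities hold for $F_\sigma-\tau_\sigma$. Alternatively, $\tau_\sigma+F_\sigma$ is correct if you order the total complex as $\mathfrak K\otimes\Omega^\bullet$, i.e.\ $D_\sigma=(-1)^q d_{\nabla^\sigma}+d_{\mathfrak K}$ with $[x\otimes\omega,y\otimes\eta]=(-1)^{|\omega||y|}[x,y]\otimes(\omega\wedge\eta)$. The paper's proof does not address this either, and its formula $\mathcal F_{\sigma+\alpha}=\mathcal F_\sigma+D_\sigma\alpha+\frac12[\alpha,\alpha]$ shows the same mismatch in bidegree $(1,1)$. So this is not a deficiency relative to the paper. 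Still, your argument is complete only once you fix one convention and carry it through, rather than concluding ``up to sign''.
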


\begin{proof}
The component of the first identity in internal degree is the compatibility of the differential with the bracket; the horizontal component is the ordinary curvature identity. The mixed component records the differential of the bracket curvature and the Chevalley--Eilenberg differential of \(\tau_\sigma\). Their sum is the graded Jacobi identity. Applying the same identity once more gives the total Bianchi equation. If the kernel is abelian, the adjoint action of the total curvature vanishes.
\end{proof}

\begin{corollary}\label{cor:AKS-obstructions}
The class \(\kappa_{\cF/S}\) is the obstruction to the existence of a derived transverse splitting of the anchor. Once a graded splitting exists, the obstruction to flatness is its total Maurer--Cartan curvature \(\mathcal F_\sigma\). In the abelian case the class
\[
 [\mathcal F_\sigma]
 \in
 H^2\Tot C^\bullet_{\mathrm{CE}}
 \bigl(\bbT_S;\mathfrak K_{\cF/S}\bigr)
\]
is independent of the graded splitting.
\end{corollary}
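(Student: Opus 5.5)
The corollary has three parts, and I will prove each from the two preceding propositions together with standard homological algebra of the anchor triangle.

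\textbf{Step 1: existence.} This follows from Proposition~\ref{prop:AKS-splitting}. For the triangle
\[
\mathfrak K_{\cF/S}\to\pi_*\uuder(\cF/S)\to\bbT_S\xto{\kappa_{\cF/S}}\mathfrak K_{\cF/S}[1],
\]
a morphism $\bbT_S\to\pi_*\uuder(\cF/S)$ lifting the identity exists exactly when the composite $\kappa_{\cF/S}$ vanishes. This is the long exact sequence of $\mathrm{Hom}(\bbT_S,-)$ applied to the triangle. A derived transverse splitting of the anchor is precisely such a lift. So $\kappa_{\cF/S}$ is the primary obstruction, and I expect no difficulty here.

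\textbf{Step 2: flatness.} Fix a graded splitting $\sigma$. I will argue that $\sigma$ is a morphism of dg-Lie algebroids over $\bbT_S$ if and only if $\tau_\sigma=0$ and $F_\sigma=0$, that is, if and only if $\mathcal F_\sigma=0$. This unwinds Definition~\ref{def:derived-flat-space} on the strict level.

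I will also compare with the Maurer--Cartan formalism of Proposition~\ref{prop:MC-def}. For a second graded splitting $\sigma'=\sigma+\alpha$ with $\alpha$ vertical of total degree one, expanding gives
\[
\mathcal F_{\sigma+\alpha}=\mathcal F_\sigma+D_\sigma\alpha+\tfrac12[\alpha,\alpha].
\]
This expansion is the key formula. It shows that flatness of $\sigma'$ is the curved Maurer--Cartan equation twisted by $\mathcal F_\sigma$. Hence $\mathcal F_\sigma$ is exactly the obstruction to flatness, and a flat splitting exists iff this equation has a solution.

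\textbf{Step 3: the abelian case.} Assume the vertical kernel is abelian. Then the bracket term vanishes, and Proposition~\ref{prop:AKS-bianchi} says $D_\sigma$ is a differential with $D_\sigma\mathcal F_\sigma=0$. So the class $[\mathcal F_\sigma]$ is defined.

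To show this class is independent of $\sigma$, take another splitting $\sigma'=\sigma+\alpha$. The expansion in Step 2 becomes $\mathcal F_{\sigma'}=\mathcal F_\sigma+D_\sigma\alpha$.

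I also need $D_{\sigma'}=D_\sigma$. The connection $\nabla^\sigma$ is the adjoint action of $\sigma(\xi)$ on the kernel. It changes by $[\alpha(\xi),-]$, which is zero because the kernel is abelian. So the total differentials agree and the two curvatures are cohomologous.

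This also shows that the vanishing of $[\mathcal F_\sigma]$ is necessary and sufficient for some splitting to be flat. Indeed, choosing $\alpha$ with $D_\sigma\alpha=-\mathcal F_\sigma$ gives $\mathcal F_{\sigma+\alpha}=0$.

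\textbf{Main obstacle.} The hard step is the sign bookkeeping in the expansion of $\mathcal F_{\sigma+\alpha}$ across the two bidegrees $(1,1)$ and $(2,0)$. In particular I must check that the $d_{\mathfrak u}$-term and the Chevalley--Eilenberg term combine with the sign $(-1)^p$ used in the definition of $D_\sigma$.

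I will handle this by verifying the formula separately on the $(1,1)$ and $(2,0)$ components. In the $(1,1)$ component, $\tau_{\sigma+\alpha}-\tau_\sigma=d_{\mathfrak K}\alpha$. In the $(2,0)$ component, the difference is $d_{\mathrm{CE},\nabla^\sigma}\alpha+\tfrac12[\alpha,\alpha]$. Then I will match each against the sign conventions of Section~\ref{sec:def-complex}.
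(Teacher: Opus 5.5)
Your proposal is correct and follows essentially the same route as the paper. The first assertion is Proposition~\ref{prop:AKS-splitting}, and the other two rest on the change-of-splitting formula $\mathcal F_{\sigma+\alpha}=\mathcal F_\sigma+D_\sigma\alpha+\frac12[\alpha,\alpha]$: in the abelian case the quadratic term vanishes and $D_\sigma$ does not depend on $\sigma$. You also spell out the componentwise check and the identification of flatness with $\mathcal F_\sigma=0$, which the paper leaves implicit.

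One caution concerns your extra remark on sufficiency. A primitive $\alpha$ with $D_\sigma\alpha=-\mathcal F_\sigma$ may have components outside bidegree $(1,0)$. It therefore yields a derived flat splitting, but a strict flat graded splitting only under the bidegree hypothesis of Theorem~\ref{thm:abelian-obstruction}.
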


\begin{proof}
The first assertion is Proposition~\ref{prop:AKS-splitting}. If
\(\sigma'=\sigma+\alpha\), then
\[
 \mathcal F_{\sigma'}
 =\mathcal F_\sigma+D_\sigma\alpha
  +\frac12[\alpha,\alpha].
\]
When the kernel is abelian the final term vanishes and the differential is independent of the splitting, so the cohomology class is well-defined.
\end{proof}

\section{Existence of flat transverse splittings}\label{sec:existence}

Return to the affine effective controller
\[
 a_{\uuder}\from\uuder(\g/B)\longrightarrow T_{B/\kk}
\]
and put \(\kfr=\ker(a_{\uuder})\). Suppose the anchor is surjective. Since \(T_{B/\kk}\) is projective for \(B\) smooth, it admits a graded \(B\)-linear section.

\begin{lemma}\label{lem:graded-splitting-projective}
A surjection of graded \(B\)-modules onto a degreewise projective graded module admits a graded section. In particular, the preceding anchor admits a graded section.
\end{lemma}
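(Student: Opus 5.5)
The first assertion is the standard fact that degreewise projective modules are projective objects for degreewise surjections, so I will split it degree by degree. Let \(p\from M=\bigoplus_n M^n\to P=\bigoplus_n P^n\) be a surjection of graded \(B\)-modules, with each \(P^n\) projective. Since \(p\) is homogeneous of degree zero, surjectivity of \(p\) means each component \(p^n\from M^n\to P^n\) is surjective. For each \(n\), projectivity of \(P^n\) gives a \(B\)-linear map \(s^n\from P^n\to M^n\) with \(p^n\circ s^n=\id_{P^n}\), obtained by lifting the identity of \(P^n\) along \(p^n\). I then set \(s:=\bigoplus_n s^n\). This is a homogeneous degree-zero \(B\)-linear map with \(p\circ s=\id_P\), hence a graded section.

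No compatibility with the differential or the bracket is asked for. This is the reason for working only with the underlying graded modules: the defects of \(s\) with respect to those structures are exactly the curvature terms \(\tau_\sigma\) and \(F_\sigma\) introduced in Section~\ref{sec:AKS}.

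For the second assertion, I take \(M=\uuder(\g/B)\) and \(P=T_{B/\kk}\), with \(p=a_{\uuder}\), which is surjective by the standing assumption of this section. It remains to check that \(T_{B/\kk}\) is degreewise projective. Since \(B\) is an ordinary smooth commutative \(\kk\)-algebra, \(T_{B/\kk}=\Der_\kk(B,B)\) is concentrated in degree zero. It is the \(B\)-dual of \(\Omega^1_{B/\kk}\), which is finitely generated projective for \(B\) smooth, so \(T_{B/\kk}\) is finitely generated projective; every other degree is zero and trivially projective. The first assertion then gives the graded section \(\sigma\).

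The main point to watch is that \(a_{\uuder}\) is only assumed surjective, not split. The degreewise argument needs surjectivity only in degree zero, where \(T_{B/\kk}\) lives, and this is part of the hypothesis. Everything else is formal.
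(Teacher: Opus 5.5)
Your proposal is correct and follows the paper's own argument: choose a section degree by degree, using that \(T_{B/\kk}\) is concentrated in degree zero and is projective as the dual of the projective module \(\Omega^1_{B/\kk}\) for smooth \(B\). You also state explicitly that \(\Omega^1_{B/\kk}\) is finitely generated, which is what makes its dual projective; the paper's one-line proof uses this without saying so.
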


\begin{proof}
Choose a section degree by degree. Smoothness of \(B\) makes \(\Omega^1_{B/\kk}\), and hence its dual \(T_{B/\kk}\), projective.
\end{proof}

For a graded section \(\sigma\), define \(\tau_\sigma\), \(F_\sigma\) and
\(\mathcal F_\sigma\) as above. If \(\alpha\) has total degree one, then
\[
 \mathcal F_{\sigma+\alpha}
 =\mathcal F_\sigma+D_\sigma\alpha
  +\frac12[\alpha,\alpha].
\]
For a strict correction of a section, \(\alpha\) lies in
\(\Omega^1_{B/\kk}\otimes_B\kfr^0\); the full derived problem permits all total degree-one components.

\begin{theorem}\label{thm:abelian-obstruction}
Assume that \(\kfr\) is abelian. Then
\[
 \mathfrak o(\g/B)
 :=[\mathcal F_\sigma]
 \in
 H^2\Tot C^\bullet_{\mathrm{CE}}
 \bigl(T_{B/\kk};\kfr\bigr)
\]
is independent of the graded section \(\sigma\). A derived flat splitting exists if and only if
\(\mathfrak o(\g/B)=0\). When it exists, the space of its equivalence classes is a torsor under
\[
 H^1\Tot C^\bullet_{\mathrm{CE}}
 \bigl(T_{B/\kk};\kfr\bigr).
\]
If \(\kfr\) is concentrated in degree zero, or more generally if every relevant total degree-one class has a representative of bidegree \((1,0)\), the same criterion classifies strict flat dg-Lie-algebroid splittings.
\end{theorem}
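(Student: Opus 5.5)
The plan is to reduce everything to the affine-change formula
\[
 \mathcal F_{\sigma+\alpha}=\mathcal F_\sigma+D_\sigma\alpha+\tfrac12[\alpha,\alpha],
\]
recorded just before the statement, together with Proposition~\ref{prop:AKS-bianchi} and Corollary~\ref{cor:AKS-obstructions}.

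\emph{Step 1: independence of the section.} Lemma~\ref{lem:graded-splitting-projective} supplies a graded section \(\sigma\). Any two graded sections differ by a vertical map \(\alpha\in\Omega^1_{B/\kk}\otimes_B\kfr^0\), which has total degree one. Since \(\kfr\) is abelian, \([\alpha,\alpha]=0\). Moreover, \(D_\sigma\) does not depend on \(\sigma\): the connection \(\nabla^\sigma_\xi k=[\sigma(\xi),k]\) changes by \([\alpha(\xi),k]=0\). By Proposition~\ref{prop:AKS-bianchi}, \(D_\sigma^2=0\) and \(\mathcal F_\sigma\) is a \(D\)-cocycle. Hence \(\mathcal F_{\sigma+\alpha}=\mathcal F_\sigma+D\alpha\), so the class \(\mathfrak o(\g/B)\) is well defined. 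This is exactly Corollary~\ref{cor:AKS-obstructions}.

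\emph{Step 2: the existence criterion.} In the derived problem a splitting is a point of \(\operatorname{Flat}^{\mathrm{der}}\). I would model such a point as a total degree-one correction \(\alpha\), allowed to have components in all bidegrees \((p,1-p)\), with \(\mathcal F_{\sigma+\alpha}=0\). By the abelian formula this equation is linear: \(\mathcal F_\sigma+D\alpha=0\). So a solution exists if and only if \(\mathcal F_\sigma\) is a coboundary, that is, if and only if \(\mathfrak o(\g/B)=0\).

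\emph{Step 3: the torsor statement.} Two solutions \(\alpha,\alpha'\) differ by a \(D\)-cocycle \(\beta=\alpha'-\alpha\). Gauge equivalences are generated by total degree-zero elements \(\gamma\), acting through \(\exp(\gamma)\). Because \(\kfr\) is abelian, this action reduces to \(\alpha\mapsto\alpha+D\gamma\). Therefore the set of equivalence classes is a torsor under \(H^1\Tot C^\bullet_{\mathrm{CE}}(T_{B/\kk};\kfr)\).

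The main obstacle is identifying \(\pi_0\operatorname{Flat}^{\mathrm{der}}\) with these Maurer--Cartan classes modulo gauge. I would invoke the deformation-theoretic description of Proposition~\ref{prop:MC-def}, and the convolution \(L_\infty\)-model from Section~\ref{sec:homotopical-controllers}, in which higher brackets vanish because \(\kfr\) is abelian. The completeness hypotheses would be handled as in Corollary~\ref{cor:rigidity}.

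\emph{Step 4: the strict statement.} If \(\kfr\) is concentrated in degree zero, total degree one forces bidegree \((1,0)\). Then \(\tau_\sigma=0\) automatically, and the Maurer--Cartan equation is exactly flatness of a strict dg-Lie-algebroid splitting. The same holds under the weaker hypothesis that every relevant total degree-one class admits a \((1,0)\) representative. In that case a derived solution can be moved by a coboundary to bidegree \((1,0)\), which is a strict flat splitting. The same representatives also realise the torsor on strict classes.
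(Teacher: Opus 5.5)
Your proposal follows the paper's proof essentially step for step. It obtains independence of \(\sigma\) from the abelian change formula \(\mathcal F_{\sigma+\alpha}=\mathcal F_\sigma+D\alpha\) together with the Bianchi identity, existence as solvability of the linear equation \(\mathcal F_\sigma+D\alpha=0\), the torsor from differences of solutions being cocycles modulo gauge coboundaries, and the strict case from the bidegree condition. One minor point, which the paper's one-line treatment also glosses over, concerns the weaker bidegree clause: a solution \(\alpha\) is not itself a cocycle, since \(D\alpha=-\mathcal F_\sigma\), so the hypothesis on classes only moves \emph{differences} of solutions into bidegree \((1,0)\), and you need either one strict solution to start from or to read the hypothesis as applying to the inhomogeneous equation.
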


\begin{proof}
By Proposition~\ref{prop:AKS-bianchi}, \(\mathcal F_\sigma\) is a cocycle. Since \(\kfr\) is abelian, changing the section by \(\alpha\) changes the curvature by the coboundary \(D\alpha\). Thus the class is independent of \(\sigma\). It vanishes precisely when a total degree-one correction solves
\(\mathcal F_\sigma+D\alpha=0\), which is the abelian Maurer--Cartan equation. Two solutions differ by a closed total degree-one cochain, and equivalence changes this difference by an exact cochain. The final assertion identifies total solutions with strict corrections under the stated bidegree condition.
\end{proof}

\begin{remark}
For non-abelian kernels the existence problem is the curved Maurer--Cartan equation
\[
 \mathcal F_\sigma+D_\sigma\alpha
 +\frac12[\alpha,\alpha]=0.
\]
For the crossed controller it is the corresponding equation in the twisted total convolution \(L_\infty\)-algebra. Thus no reduction to an underived or effective deformation problem is required.
\end{remark}

\section{A split normal form for the unfolded de Rham algebra}\label{sec:normal-form}

Choose a graded splitting \(\sigma\) of the effective anchor. Write
\(R=\DR(\cF/S)\) and let \(d_{\mathrm{int}}\) and \(\epsilon_{\cF/S}\) be its internal and mixed differentials. A correction
\(\theta\in\Omega^1_{B/\kk}\otimes_B\kfr^0\) defines the transverse lift
\(\sigma+\theta\). On the Tate realisation of
\(R\otimes_B\DR(B/\kk)\), set
\[
 D_\theta
 :=d_{\mathrm{int}}
   +\epsilon_{\cF/S}+\epsilon_B
   +L_{\sigma+\theta},
\]
where the weight shifts implicit in the Tate realisation make the displayed sum homogeneous.

\begin{proposition}\label{prop:normal-form}
The internal and horizontal components of the square of \(D_\theta\) combine into
\[
 D_\theta^2
 =L_{\mathcal F_\sigma+D_\sigma\theta+\frac12[\theta,\theta]}.
\]
Equivalently, before Tate realisation one has the two graded-mixed identities
\[
 [d_{\mathrm{int}},\epsilon_\theta]
 =L_{\tau_{\sigma+\theta}},
 \qquad
 \epsilon_\theta^2=L_{F_{\sigma+\theta}},
\]
where
\(\epsilon_\theta=\epsilon_{\cF/S}+\epsilon_B+L_{\sigma+\theta}\).
If the vertical action is faithful, the resulting graded-mixed structure is integrable if and only if the total Maurer--Cartan curvature vanishes. Hence effective flat splittings are equivalent to split normal forms satisfying the total Maurer--Cartan equation.
\end{proposition}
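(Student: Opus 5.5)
The plan is a direct commutator computation in the split model, organised by weight and by the bidegree conventions of the total Chevalley--Eilenberg complex. I would first record the basic identities available in a CE chart \(R=\CE^*(\g)\), which by Proposition~\ref{prop:intrinsic-comparison} suffices. First, \(d_{\mathrm{int}}^2=0\), \(\epsilon_{\cF/S}^2=0\), \(\epsilon_B^2=0\) and all mutual anticommutators among these three vanish, because \(R\otimes_B\DR(B/\kk)\) is a tensor product of graded-mixed algebras. Second, for \(u\in D^1_{\bas}(\g/B)\), the Lie derivative \(L_u\) is a weight-zero graded-mixed derivation. Hence \([\epsilon_{\cF/S},L_u]=0\), and \([d_{\mathrm{int}},L_u]=L_{du}\). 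Third, \(L\) is a dg-Lie morphism: \([L_u,L_v]=L_{[u,v]}\). Fourth, the Koszul rule for the base forms gives \([\epsilon_B,L_{\sigma+\theta}]=L_{d_{\mathrm{CE}}(\sigma+\theta)}\), understood as the de Rham differential of the \(\Omega^1_B\)-valued lift. Here the term \(\sigma\) itself contributes nothing beyond the symbol correction that makes \(\epsilon_B+L_\sigma\) the absolute mixed differential on pulled-back functions.

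Expanding \(\epsilon_\theta^2\) then leaves only two pieces: the cross terms \([\epsilon_B,L_{\sigma+\theta}]\), and the self-term \(\tfrac12[L_{\sigma+\theta},L_{\sigma+\theta}]=L_{\frac12[\sigma+\theta,\sigma+\theta]}\). Evaluated on pairs \(\xi,\eta\), their sum is \(L\) applied to \([\sigma(\xi),\sigma(\eta)]-\sigma([\xi,\eta])\) plus the \(\theta\)-terms. This is \(L_{F_{\sigma+\theta}}\). Similarly, \([d_{\mathrm{int}},\epsilon_\theta]=[d_{\mathrm{int}},L_{\sigma+\theta}]\), which is \(L_{\tau_{\sigma+\theta}}\) by the second identity. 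Adding the two components in total degree two, with the sign \((-1)^p\) of \(D_s\), gives \(D_\theta^2=L_{\mathcal F_{\sigma+\theta}}\). Finally, the expansion \(\mathcal F_{\sigma+\theta}=\mathcal F_\sigma+D_\sigma\theta+\frac12[\theta,\theta]\), already recorded in Section~\ref{sec:existence}, yields the displayed formula.

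For the integrability claim, note that \(L_{\mathcal F}\) is computed in \(\uuder\), which is defined modulo inner derivations. So I would lift \(\mathcal F\) to \(D^1_{\bas}\) and check that inner representatives act by Cartan-null operators. More precisely, on \(\CE^*\) itself the inner part acts through \(L_x=[\epsilon_{CE},\iota_x]\), and so it is absorbed by redefining the mixed differential. Faithfulness of the vertical action, meaning \(\kfr\to\Der^{\gm}(R)/\mathrm{inner}\) is injective, then gives that \(L_{\mathcal F}=0\) implies \(\mathcal F=0\); the converse direction is immediate. Combining this with Theorem~\ref{thm:strict-affine} identifies effective flat splittings with split normal forms satisfying the Maurer--Cartan equation.

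The main obstacle is this last point, the well-definedness of \(L_{\mathcal F}\) modulo inner derivations. I expect to handle it by working throughout with a lift to \(D^1_{\bas}(\g/B)\) and tracking the \(\iota(\g)\)-ambiguity as a change of the mixed structure by \(\epsilon_{CE}\)-exact terms. Getting the Koszul signs in the mixed bidegree \((1,1)\) component right is routine but delicate.
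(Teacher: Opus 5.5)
Your computation of \(D_\theta^2\) follows the paper's route: the commutator with \(d_{\mathrm{int}}\) gives \(L_\tau\), the square of the mixed part gives \(L_F\), and the cross and quadratic terms give \(D_\sigma\theta+\frac12[\theta,\theta]\). One step needs repair. \(\epsilon_B\) and \(L_{\sigma+\theta}\) are not separately well defined on \(R\otimes_B\DR(B/\kk)\), because \(\epsilon_B\) is not \(B\)-linear; only their sum is. So the ``tensor product of graded-mixed algebras'' justification of your first identity is wrong. Work instead in \'etale coordinates \(t_i\) on \(B\). There \(\epsilon_B+L_{\sigma+\theta}=\sum_i dt_i\,L_{(\sigma+\theta)(\partial_{t_i})}\) with closed \(dt_i\), and the identity for \(\epsilon_\theta^2\) is the expansion of \(\sum_{i<j}dt_i\,dt_j\,[L_{(\sigma+\theta)(\partial_{t_i})},L_{(\sigma+\theta)(\partial_{t_j})}]\).

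The genuine gap is the integrability step, exactly where you locate the obstacle. To act on \(R=\CE^*(\g)\) you must lift \(\sigma+\theta\) to some \(\tilde\sigma\) with values in \(D^1_{\bas}(\g/B)\). The computation then gives \(D_\theta^2=L_{\tilde{\mathcal F}}\), where \(\tilde{\mathcal F}\) is the curvature of the lift. Vertical elements of \(D^1_{\bas}(\g/B)\) act faithfully on \(R\) (Proposition~\ref{prop:intrinsic-comparison}), so \(D_\theta^2=0\) if and only if \(\tilde{\mathcal F}=0\). That yields ``integrable \(\Rightarrow\) \(\mathcal F=0\)''. The direction you call immediate is false, however: \(\mathcal F=0\) in \(\kfr\) only says \(\tilde{\mathcal F}=\iota(z)\), and then \(D_\theta^2=L_{\iota(z)}\), which is non-zero in general.

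Here is a concrete case. Take \(B=\kk[t_1,t_2]\), \(A=B[x]\), \(\g=T_{A/B}=A\,\partial_x\). Then \(\uuder(\g/B)\cong T_{B/\kk}\) and \(\kfr=0\), so \(\mathcal F=0\) automatically. Yet the lift \(\partial_{t_1}\mapsto\partial_{t_1}\), \(\partial_{t_2}\mapsto\partial_{t_2}+t_1\partial_x\) has curvature \(\iota(\partial_x)\). It gives \(D_\theta^2=\pm\,dt_1\,dt_2\,L_{\partial_x}\), which is non-zero on \(x\in A\).

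Your faithfulness condition, injectivity of \(\kfr\to\Der^{\gm}(R)/\mathrm{inner}\), holds tautologically and cannot close this. Nor can you simply insist on a flat lift. A flat lift is a Lie-algebroid splitting of \(0\to\g\to\e_s\to T_{B/\kk}\to0\), which need not exist. For example, take \(A=B\) and \(\g=\operatorname{ad}P\) with zero anchor, where \(P\) is a \(PGL_n\)-bundle admitting no flat connection.

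What is missing is the contraction term dual to the \(\g\)-components \(d\widetilde\xi\) and \([\widetilde\xi,\widetilde\eta]-\widetilde{[\xi,\eta]}\) from Steps~1--2 of the proof of Theorem~\ref{thm:strict-affine}. In the example above it is \(d\nu=-dt_1\wedge dt_2\) for the relative coframe \(\nu\leftrightarrow dx-t_1\,dt_2\). Your idea of absorbing inner parts via \(L_x=[\epsilon_{CE},\iota_x]\) is the right one, but it must be carried out, and it changes the normal form.

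\begin{enumerate}
\item Write \(\tilde{\mathcal F}=\iota(z)\) with \(z=y+c\), where \(y\in\Omega^1_{B/\kk}\otimes_B\g^1\) and \(c\in\Omega^2_{B/\kk}\otimes_B\g^0\). These are unique by effectivity.
\item Replace \(D_\theta\) by \(D_\theta-\iota_z\), with \(\iota_z\) the contraction. The \(y\)-part corrects the internal differential and the \(c\)-part the mixed one.
\item Up to sign conventions, the Cartan identities give \((D_\theta-\iota_z)^2=L_{\tilde{\mathcal F}-\iota(z)}-\iota_{D_{\tilde\sigma}z}\).
\item The Bianchi identity \(D_{\tilde\sigma}\tilde{\mathcal F}=0\) (as in Proposition~\ref{prop:AKS-bianchi}), the equivariance \([u,\iota(x)]=\iota(u\cdot x)\) and the injectivity of \(\iota\) together force \(D_{\tilde\sigma}z=0\).
\item The two remaining terms have different \(\g^\vee\)-weights. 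Hence \((D_\theta-\iota_z)^2=0\) for some \(z\) if and only if \(\mathcal F_{\sigma+\theta}=0\) in \(\kfr\).
\end{enumerate}

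The corrected operator is the CE differential of the reconstructed \(\widetilde\g_s\) in a splitting \(\widetilde\g_s\cong\g\oplus(A\otimes_BT_{B/\kk})\). This is what makes the final equivalence with flat splittings true. Effectivity enters through the injectivity of \(\iota\), not through a faithfulness hypothesis on \(\kfr\). The paper's own proof appeals only to faithfulness and passes over this same point, so you will need to supply this correction yourself.
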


\begin{proof}
The commutator with the internal differential measures the defect of \(\sigma+\theta\) as a chain map and gives the Lie derivative of \(\tau_{\sigma+\theta}\). The square of the mixed part measures failure to preserve the base bracket and gives the Lie derivative of \(F_{\sigma+\theta}\). The cross terms give the covariant differential of \(\theta\), while the quadratic term is \(\frac12[\theta,\theta]\). Summing the two bidegrees on the Tate realisation yields the displayed total formula. Faithfulness identifies vanishing of the Lie derivative with vanishing of its coefficient.
\end{proof}

\begin{proposition}\label{prop:higher-normal-form}
For the non-effective crossed controller, a split normal form is governed by a Maurer--Cartan element of the total convolution \(L_\infty\)-algebra associated with
\(\Uder(\g/B)\). Its unary component is the transverse Lie-derivative term above, while its higher components provide the coherent null-homotopies of central curvature. The full \(L_\infty\) Maurer--Cartan equation is equivalent to the square-zero condition for the resulting graded-mixed structure together with its prescribed coherences.
\end{proposition}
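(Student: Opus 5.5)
The plan mirrors the effective case of Proposition~\ref{prop:normal-form}, but replaces the quotient \(\uuder(\g/B)\) by the crossed module \(\Uder(\g/B)=[\g\xto{\iota}D^1_{\bas}(\g/B)]\), viewed as a two-term \(L_\infty\)-algebroid in the convention of Section~\ref{sec:homotopical-controllers}.

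\emph{Step 1: choose a graded lift.} Take a graded section \(\sigma\) of the anchor of \(D^1_{\bas}(\g/B)\to T_{B/\kk}\), rather than of the quotient; it exists by Lemma~\ref{lem:graded-splitting-projective}.

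\emph{Step 2: set up the convolution algebra.} Form the total convolution \(L_\infty\)-algebra
\[
 \mathfrak L_\sigma:=\Tot\bigl(\Omega^\bullet_{B/\kk}\otimes_B[\g\to\kfr^{\mathrm{D}}]\bigr),
\]
where \(\kfr^{\mathrm D}\) is the vertical part of \(D^1_{\bas}(\g/B)\) and \(\g\) sits one degree lower. Its operations are as follows. The unary operation \(\ell_1\) is \(D_\sigma\) plus \(\iota\). The binary operation \(\ell_2\) is the wedge bracket together with the derivation action of \(D^1_{\bas}\) on \(\g\). The higher operations \(\ell_{\ge3}\) come from the homotopy transfer of the crossed-module structure and vanish on the strict model.

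A Maurer--Cartan element is a pair \((\theta,\beta)\). Here \(\theta\) is a total-degree-one form with values in \(\kfr^{\mathrm D}\), which is the unary transverse Lie-derivative term \(L_{\sigma+\theta}\). The second component \(\beta\in\Tot^2(\Omega^\bullet\otimes\g)\) is the higher component.

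\emph{Step 3: unpack the Maurer--Cartan equation.} The MC equation splits into two parts:
\[
 \mathcal F_\sigma+D_\sigma\theta+\tfrac12[\theta,\theta]=\iota(\beta),
 \qquad
 D^{\g}_{\sigma+\theta}\beta=0 .
\]
So \(\beta\) is a chosen null-homotopy of the curvature through inner derivations. Since \(\iota\) is not injective, \(\beta\) is extra data. Its ambiguity lies in \(\z(\g)\), and this matches the central isotropy described in Proposition~\ref{prop:central-stabiliser-complex}.

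\emph{Step 4: build the graded-mixed structure.} On the Tate realisation of \(\CE^*(\g)\otimes_B\DR(B/\kk)\), set
\[
 D_{\theta,\beta}:=d_{\mathrm{int}}+\epsilon_{\cF/S}+\epsilon_B+L_{\sigma+\theta}-\iota_\beta ,
\]
where \(\iota_\beta\) is contraction by the \(\g\)-valued form, i.e.\ the Cartan homotopy of Lemma~\ref{lem:cartan-homotopy}.

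Proposition~\ref{prop:normal-form} shows that the first three terms together with \(L_{\sigma+\theta}\) square to \(L_{\mathcal F_\sigma+D_\sigma\theta+\frac12[\theta,\theta]}\). The Cartan identity \([\epsilon_{\cF/S},\iota_x]=L_{\iota(x)}\) (Proposition~\ref{prop:intrinsic-comparison}) then gives
\[
 D_{\theta,\beta}^2=L_{\text{first MC component}}+\iota_{\text{second MC component}}+\text{higher terms}.
\]
The higher terms, such as \(\iota_\beta^2\) and \([L,\iota_\beta]\), must be matched with \(\ell_2\) and \(\ell_{\ge3}\).

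\emph{Step 5: conclude.} Use that \(L\) and \(\iota\) are jointly faithful on the CE algebra, so each weight component can be read off separately. Then the MC equation holds if and only if \(D_{\theta,\beta}^2=0\). The coherences are the statement that gauge equivalences in \(\mathfrak L_\sigma\) act by graded-mixed automorphisms.

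\emph{Main obstacle.} The hardest part is this last matching in Step 4. Contractions do not satisfy a naive square-zero rule in the dg case, so the higher operations of the transferred \(L_\infty\)-structure must be identified exactly with the non-strict terms arising from \(\iota_\beta\). The first attempt will be to work in the strict crossed-module model, where \(\ell_{\ge3}=0\), and check directly that the crossed-module identities \(\iota(D\cdot x)=[D,\iota(x)]\) and \(\iota(x)\cdot y=[x,y]\) absorb the cross terms. The general \(L_\infty\) statement would then follow from homotopy invariance (Theorem~\ref{thm:presentation-independence}).
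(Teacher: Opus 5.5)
Your proposal is correct and follows the same route as the paper: represent the crossed controller on the relative de Rham algebra (Lie derivatives for $D^1_{\bas}(\g/B)$, Cartan contractions for $\g$), twist by the Maurer--Cartan element, and identify the square of the twisted differential with the image of the Maurer--Cartan curvature. You make the paper's appeal to the ``standard twisting identity'' explicit in the strict crossed-module model, and there your ``main obstacle'' is mild: contractions graded-commute, so $\iota_\beta^2=0$, and $[L_k,\iota_y]=\pm\iota_{k\cdot y}$ absorbs the cross terms. You also supply the joint-faithfulness step that the paper leaves implicit.
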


\begin{proof}
A representation of the two-term \(L_\infty\)-controller on the relative de Rham algebra sends the brackets of the convolution algebra to commutators of derivations and its higher brackets to the corresponding coherent homotopies. Twisting this representation by a Maurer--Cartan element changes the total differential by the associated unary and higher operators. The standard twisting identity identifies the square and all higher coherence defects with the image of the Maurer--Cartan curvature. Thus the twisted structure is integrable precisely when the full Maurer--Cartan equation holds \cite{Hinich01,Pridham10,Nuiten}.
\end{proof}

\begin{remark}
In the effective case the higher components vanish and Proposition~\ref{prop:higher-normal-form} reduces to Proposition~\ref{prop:normal-form}. In the presence of central isotropy, the equality \(D_\theta^2=0\) alone would only say that the curvature acts trivially; the higher component records its chosen null-homotopy and is therefore indispensable.
\end{remark}

\section{Leaf-space interpretation in the representable case}\label{sec:leaf-space}

The preceding constructions are internal to the foliation. When a leaf space is represented by a smooth morphism of schemes, the same theory admits a classical geometric interpretation: a flat transverse unfolding is an Ehresmann connection on the leaf space.

\begin{hypothesis}
Let
\[
\begin{tikzcd}
X \arrow[r,"q"] \arrow[dr,"\pi"'] & Y \arrow[d,"p"] \\
& S
\end{tikzcd}
\]
be smooth morphisms of smooth schemes, with \(q\) faithfully flat and \(\pi=p\circ q\). Let \(\cF/S\) be the relative foliation on \(X/S\) whose tangent complex is \(T_{X/Y}\). Equivalently, \(\cF/S\) is induced by the smooth leaf-space map \(q\).
\end{hypothesis}

\begin{proposition}\label{prop:leaf-controller}
Under the hypothesis above there are two distinct transverse objects.
First, the normal sheaf of the foliation induced by \(q\) is the \(\cO_X\)-module
\[
 T_X/T_{X/Y}\simeq q^*T_{Y/\kk}.
\]
Second, the sheaf of vector fields projectable along \(q\) is defined tautologically by
\[
 T_X^{\mathrm{proj}/Y}:=
 T_X\times_{\underline{\Der}_{\kk}(q^{-1}\cO_Y,\cO_X)}q^{-1}T_{Y/\kk}.
\]
Here the first arrow is restriction of derivations to \(q^{-1}\cO_Y\). The second arrow is obtained from a derivation of \(\cO_Y\) by applying \(q^{-1}\) and composing with \(q^{-1}\cO_Y\to\cO_X\). There is an exact sequence of sheaves of \(\kk\)-Lie algebras
\[
 0\longrightarrow T_{X/Y}\longrightarrow T_X^{\mathrm{proj}/Y}
 \longrightarrow q^{-1}T_{Y/\kk}\longrightarrow 0.
\]
Thus the projectable transverse symmetry sheaf is \(q^{-1}T_{Y/\kk}\), not \(q^*T_{Y/\kk}\). The latter is obtained from the former only after extension of scalars from \(q^{-1}\cO_Y\) to \(\cO_X\), and this extension forgets the projectability condition.

The transverse anchor for projectable symmetries is the inverse image of
\[
 dp\from T_{Y/\kk}\longrightarrow p^*T_{S/\kk}.
\]
Consequently, leaf-space-basic transverse splittings are those \(\cO_X\)-linear splittings of the normal sequence whose transverse classes descend from \(Y\); equivalently, they are the pullbacks along \(q\) of splittings of the tangent sequence of \(p\from Y\to S\).
\end{proposition}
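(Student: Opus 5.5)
The statement reduces to local computations with smooth morphisms, carried out in étale or smooth local coordinates adapted to the tower \(X\xto{q}Y\xto{p}S\). All sheaves involved are sheaves on \(X\) for the Zariski or étale topology, so exactness may be checked locally. The first step is the normal sheaf. Because \(q\) is smooth, the relative tangent sequence
\[
 0\longrightarrow T_{X/Y}\longrightarrow T_X\longrightarrow q^*T_{Y/\kk}\longrightarrow 0
\]
is exact; this is the dual of the exact sequence of locally free sheaves \(0\to q^*\Omega^1_Y\to\Omega^1_X\to\Omega^1_{X/Y}\to0\). This immediately gives \(T_X/T_{X/Y}\simeq q^*T_{Y/\kk}\).

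The second step is exactness of the projectable sequence. The kernel of \(T_X^{\mathrm{proj}/Y}\to q^{-1}T_{Y/\kk}\) consists of pairs \((Z,0)\) with \(Z|_{q^{-1}\cO_Y}=0\), which is \(T_{X/Y}\) by definition. The map \(T_X^{\mathrm{proj}/Y}\to T_X\) is injective because \(q\) is faithfully flat: \(q^{-1}\cO_Y\to\cO_X\) is injective, so a derivation \(\eta\) of \(\cO_Y\) is determined by its composite with this map. The main obstacle is surjectivity onto \(q^{-1}T_{Y/\kk}\): one must lift a local vector field \(\eta\) on \(Y\) to a \(q\)-related vector field on \(X\). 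I would do this locally. Smoothness of \(q\) lets one choose, locally on \(X\), coordinates \((y_1,\dots,y_m,z_1,\dots,z_r)\), étale over affine space, with the \(y_i\) pulled back from \(Y\). In such coordinates one takes \(Z=\sum_i q^{-1}(\eta(y_i))\,\partial_{y_i}\). Equivalently, one chooses a local \(\cO_X\)-splitting of the first sequence and applies it to \(1\otimes\eta\); the resulting field is \(q\)-related to \(\eta\) because it agrees with \(\eta\) on the generators \(y_i\) of \(q^{-1}\cO_Y\) up to étale extension. The bracket statement follows as in Definition~\ref{def:basic-gm-der}: commutators of projectable pairs are projectable, and both maps preserve brackets. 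The sequence is only \(q^{-1}\cO_Y\)-linear and not \(\cO_X\)-linear. Comparing it with the first sequence via \(\cO_X\otimes_{q^{-1}\cO_Y}(-)\) then shows that \(q^*T_Y\) is the scalar extension and that projectability is lost, since \(fZ\) is projectable only when \(f\) is basic.

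The third step concerns the anchors and splittings. Composing \(q^{-1}T_Y\to q^{-1}p^*T_S\) with \(q^{-1}dp\) gives the transverse anchor, compatible with \(q\circ\pi\)-relatedness because \(\pi=p\circ q\). A leaf-space-basic splitting is an \(\cO_X\)-splitting of the normal sequence whose classes lie in the image of \(q^{-1}\) of a splitting \(s_Y\colon p^*T_S\to T_Y\). Conversely, given such data one obtains \(s_Y\) by faithfully flat descent along \(q\): a section of \(q^*\mathcal{H}om(p^*T_S,T_Y)\) that is basic, meaning it is determined on \(q^{-1}\) data, descends uniquely. Splitting identities such as \(dp\circ s_Y=\id\) can then be checked after the faithfully flat pullback \(q^*\). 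This gives the asserted equivalence with pullbacks of splittings of the tangent sequence of \(p\).
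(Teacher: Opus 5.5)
Your proposal is correct and follows essentially the same route as the paper. Both arguments use the relative tangent sequence of the smooth map $q$ for the normal sheaf, the kernel computation plus a local lift of base vector fields for exactness of the projectable sequence, and faithfully flat descent along $q$ for the splittings; your étale-coordinate lift, or lifting $1\otimes\eta$ through a local splitting of $T_X\to q^*T_{Y/\kk}$, is a more careful version of the paper's chart $X\simeq Y\times\mathbb A^r$.

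One sentence needs tightening: lying in $q^{-1}$ of the Hom sheaf is not by itself a descent datum when the fibres of $q$ are disconnected. Since a leaf-space-basic splitting is defined as coming from some $s_Y$, faithfulness of $q^*$ is all you need, giving uniqueness of $s_Y$ and $dp\circ s_Y=\id$ exactly as in the paper.
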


\begin{proof}
The first assertion is the usual exact sequence of tangent sheaves for the smooth morphism \(q\):
\[
 0\longrightarrow T_{X/Y}\longrightarrow T_X
 \longrightarrow q^*T_{Y/\kk}\longrightarrow 0.
\]
It identifies the \(\cO_X\)-linear normal sheaf of the foliation with \(q^*T_{Y/\kk}\). This statement concerns normal directions as an \(\cO_X\)-module; it says nothing about whether a normal vector field is represented by a vector field on \(X\) which is related to a vector field on \(Y\).

By definition, a section of \(T_X^{\mathrm{proj}/Y}\) is a pair \((V,W)\), where \(V\) is a vector field on \(X\), \(W\) is a local vector field on \(Y\), and the derivation induced by \(V\) on functions pulled back from \(Y\) is the pullback of \(W\). Projection to the second component gives a morphism of sheaves of \(\kk\)-Lie algebras
\[
 T_X^{\mathrm{proj}/Y}\longrightarrow q^{-1}T_{Y/\kk}.
\]
Its kernel consists of those pairs \((V,0)\) for which \(V\) annihilates \(q^{-1}\cO_Y\); this is \(T_{X/Y}\). Since \(q\) is smooth, the morphism is locally surjective: after replacing \(X\) and \(Y\) by smooth local charts for \(q\), one may write \(X\simeq Y\times\mathbb A^r\), and a vector field \(W\) on \(Y\) is lifted to \((W,0)\) on \(Y\times\mathbb A^r\). Hence
\[
 0\to T_{X/Y}\to T_X^{\mathrm{proj}/Y}\to q^{-1}T_{Y/\kk}\to0
\]
is exact as a sequence of sheaves of \(\kk\)-Lie algebras.
The tautological definition is necessary. The condition \([V,T_{X/Y}]\subset T_{X/Y}\) is satisfied by every vector field when \(q\) is \'{e}tale, and therefore cannot by itself characterise vector fields related to vector fields on \(Y\). For example, for \(q\from\mathbb G_m\to\mathbb G_m\), \(y=x^2\), the relative tangent sheaf is zero, but \(x^2\partial_x\) is not related to a vector field on the target, since its action on \(y\) is \(2x^3\), not the pullback of a regular function of \(y\).
The distinction between \(q^{-1}T_{Y/\kk}\) and \(q^*T_{Y/\kk}\) is also visible in the elementary projection \(\mathbb A^2_{x,y}\to\mathbb A^1_y\). The normal module contains \(x\partial_y\), but no vector field representing this normal class is projectable, since
\[
 [x\partial_y+g(x,y)\partial_x,\partial_x]
 =-\partial_y-\frac{\partial g}{\partial x}\partial_x
\]
is not vertical. Thus projectability is not \(\cO_X\)-linear.
The map on projectable transverse symmetries is the inverse image of \(dp\). Extending scalars gives the normal morphism
\[
 q^*T_{Y/\kk}\longrightarrow q^*p^*T_{S/\kk}=\pi^*T_{S/\kk}.
\]
A transverse normal splitting is called leaf-space-basic when this \(\cO_X\)-linear splitting is obtained by pulling back a splitting of
\[
 T_{Y/\kk}\longrightarrow p^*T_{S/\kk}.
\]
Faithfully flat descent for quasi-coherent sheaves along \(q\) identifies such descended splittings with splittings on \(Y\), as claimed.
\end{proof}

\begin{theorem}\label{thm:leaf-space-connection}
In the representable leaf-space situation, leaf-space-basic transversal unfoldings of \(\cF/S\) are equivalent to flat splittings
\[
 p^*T_{S/\kk}\longrightarrow T_{Y/\kk}
\]
of the tangent sequence of \(p\from Y\to S\). Equivalently, they are integrable Ehresmann connections on the leaf-space fibration \(Y\to S\), pulled back to \(X\).
\end{theorem}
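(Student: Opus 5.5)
The argument reduces to the affine classification of Theorem~\ref{thm:strict-affine}, applied to the leaf-space foliation with \(\g=T_{X/Y}\) (effective, being a smooth subbundle with faithful anchor), combined with the identification of projectable transverse symmetries in Proposition~\ref{prop:leaf-controller}. I will work on affine charts \(X=\Spec A\), \(Y=\Spec C\), \(S=\Spec B\). I will glue at the end using Theorem~\ref{thm:descent-controller}, since both sides are étale-local on \(S\) and Zariski-local on \(Y\).

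\emph{Identify the controller.} I first compute \(D^1_{\bas}(T_{X/Y}/B)\). A first-order Lie derivation \((\theta,\delta)\) of \(T_{X/Y}\) with symbol \(\theta\) is forced to be \(\delta=[\theta,-]|_{T_{X/Y}}\), by anchor compatibility and injectivity of \(\rho\). It exists precisely when \(\theta\) normalises \(T_{X/Y}\). So \(D^1_{\bas}\) is the algebra of vector fields on \(X\) normalising \(T_{X/Y}\) and projecting to a vector field on \(S\), and \(\uuder=D^1_{\bas}/T_{X/Y}\).

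I then restrict to the leaf-space-basic locus: those classes whose transverse component comes from \(q^{-1}T_{Y/\kk}\). By Proposition~\ref{prop:leaf-controller} this sub-algebroid is \(T_X^{\mathrm{proj}/Y}\times_{q^{-1}p^{-1}T_S}T_{B}\) modulo \(T_{X/Y}\). That is the pullback to \(X\) of the Lie algebroid \(T_{Y/\kk}^{p\text{-}\bas}\) of vector fields on \(Y\) projectable to \(S\). Its anchor to \(T_{B/\kk}\) is \(dp\).

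\emph{Compare flat splittings.} A leaf-space-basic flat splitting \(s\colon T_{B/\kk}\to\uuder\) is then a \(B\)-linear, bracket-preserving section of \(dp\) with values in \(q^{-1}\)(projectable fields on \(Y\)). Faithful flatness of \(q\) (so \(C\to A\) is injective and \(q^{-1}\cO_Y\)-sections descend) shows that such an \(s\) comes from a unique \(B\)-linear Lie section \(T_{B/\kk}\to T_{C/\kk}^{\bas}\). Extending scalars from \(B\) to \(C\), exactly as in Lemma~\ref{lem:reconstruction-basic-extension}, turns this into a \(C\)-linear map \(p^*T_{S/\kk}\to T_{Y/\kk}\) splitting \(dp\).

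Flatness is preserved and reflected. The Lie–Rinehart formula computes the curvature of the \(C\)-linear extension from that of the \(B\)-linear section, since the curvature is \(C\)-bilinear and \(p^*T_S\) is generated by \(1\otimes T_B\). An integrable Ehresmann connection is precisely such a flat \(C\)-linear splitting. Composing with the bijection of Theorem~\ref{thm:strict-affine} gives the local equivalence, and both constructions are visibly inverse.

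\emph{Main obstacle.} The delicate step is the descent of the splitting from \(X\) to \(Y\). The subtlety is the one flagged in Proposition~\ref{prop:leaf-controller}: projectability is not \(\cO_X\)-linear, so one must keep track of \(q^{-1}T_Y\) versus \(q^*T_Y\) throughout. I would handle it by working with the \(B\)-linear section of the basic part, where sections are honestly projectable pairs, and only extending scalars at the end. This uses faithfully flat descent for \(C\to A\) to see that \(A\)-linear combinations of pulled-back projectable fields descend uniquely. The global statement then follows because both sides are sheaves compatible with étale base change (Corollary~\ref{cor:base-change-unf}) and the local equivalences are natural.
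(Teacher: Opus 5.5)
Your argument is correct in outline and uses the same three inputs as the paper's proof: Proposition~\ref{prop:leaf-controller}, faithful flatness of $q$, and Theorem~\ref{thm:strict-affine}. The difference is where you place the key step.

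The paper works with $\cO_X$-linear splittings of $q^*T_{Y/\kk}\to\pi^*T_{S/\kk}$. There, leaf-space-basic means carrying descent data along $q$, and faithfully flat descent produces the unique splitting on $Y$. Flatness is compared by pulling the curvature back along $q$. The affine classification is then invoked without spelling out how these normal splittings match splittings of $\uuder(\g/B)$.

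You instead compute the controller for $\g=T_{X/Y}$ (normalisers of $T_{X/Y}$ modulo $T_{X/Y}$). Its leaf-space-basic part consists of classes of genuinely projectable fields, so a leaf-space-basic flat splitting is already a $B$-linear Lie section $T_{B/\kk}\to T^{\bas}_{C/\kk}$ on $Y$. Only then do you extend scalars from $B$ to $C$, comparing flatness through the $C$-bilinearity of the Lie--Rinehart curvature.

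Your route buys two things. It makes the link with Theorem~\ref{thm:strict-affine} explicit. It also never has to interpret the bracket curvature of an $\cO_X$-linear normal splitting; the normal sheaf carries no bracket, since $T_{X/Y}$ is not an ideal in $T_X$. Faithful flatness then enters only through injectivity of $C\to A$. The paper's route is shorter because the descent datum is built into the definition.

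Two justifications need correcting.

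First, faithful flatness does not make sections of the sheaf inverse image $q^{-1}$ descend. It governs $q^*$-data satisfying the equaliser condition for $A\rightrightarrows A\otimes_C A$. Sections of $q^{-1}T_{Y/\kk}$ over $X$ come from $Y$ only when the fibres are connected. For example, take $X=Y\sqcup Y\to Y$. Then $T_{X/Y}=0$, every field is projectable on each copy, and ``values in $q^{-1}$(projectable fields)'' allows different connections on the two copies. So leaf-space-basic must mean, as in the paper, that the transverse components are pullbacks of vector fields on $Y$ itself, i.e.\ lie in $T_{C/\kk}$ on your charts. With that definition:
\begin{itemize}
\item your ``descent'' step becomes tautological, and uniqueness is just injectivity;
\item the same global definition is what makes the local splittings on $Y$ agree when you glue;
\item for the gluing, the affine charts $\Spec A\to\Spec C$ must be chosen surjective, so that faithful flatness holds chart by chart.
\end{itemize}

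Second, the claim that ``$A$-linear combinations of pulled-back projectable fields descend uniquely'' is false as stated. The field $x\partial_y$ in Proposition~\ref{prop:leaf-controller} is such a combination and does not descend. What you actually need is only that the $A$-linear extension of your $B$-linear section is the base change of the $C$-linear splitting.
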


\begin{proof}
By Proposition \ref{prop:leaf-controller}, the \(\cO_X\)-linear normal sheaf is \(q^*T_{Y/\kk}\), whereas the projectable transverse symmetries before extension of scalars form \(q^{-1}T_{Y/\kk}\). Hence a leaf-space-basic splitting is not an arbitrary splitting of
\[
 q^*T_{Y/\kk}\longrightarrow q^*p^*T_{S/\kk},
\]
but a splitting carrying descent data along \(q\), equivalently the scalar extension of a unique splitting of
\[
 T_{Y/\kk}\longrightarrow p^*T_{S/\kk}.
\]
This is an Ehresmann connection for \(p\from Y\to S\).

The bracket curvature of the pulled-back splitting is the pullback of the bracket curvature of the splitting on \(Y\). Since \(q\) is faithfully flat, the pulled-back curvature vanishes if and only if the curvature on \(Y\) vanishes. Therefore flat leaf-space-basic transverse splittings are flat Ehresmann connections on \(Y\to S\). Applying the affine classification theorem to these descended projectable splittings gives the announced equivalence with leaf-space-basic transversal unfoldings.
\end{proof}

\begin{remark}
Theorem \ref{thm:leaf-space-connection} explains the role of the analytic leaf spaces appearing in the theory of derived foliations. Whenever a derived foliation admits an analytic leaf space satisfying the hypotheses of Section~\ref{sec:leaf-space}, a leaf-space-basic flat transverse unfolding is viewed here as a flat connection on that leaf space. The intrinsic controller \(\Uder(\cF/S)\) is the replacement for \(T_Y\) when the leaf space is formal, derived, singular, or not represented by a scheme.
\end{remark}

\section{Filtered Gauss--Manin connection and characteristic classes}\label{sec:filtered-GM}

In the present Cartan-linearised setting the transverse operators have weight zero. Consequently the Gauss--Manin connection constructed above preserves the weight filtration on the Tate realisation. We shall refer to the resulting filtration as the Cartan-linearised Hodge filtration. Its preservation is stronger than the usual Griffiths-transversality statement for classical Gauss--Manin connections and depends upon the chosen transverse Cartan linearisation; no corresponding assertion is intended for arbitrary filtered de Rham complexes. We record the argument because this is the form in which the construction interacts with foliated characteristic classes.

\begin{definition}
Let \(M\) be a coherently \(s\)-Cartan-linearised crystal along \(\cF/S\). The Hodge filtration on
\[
 \DR_{\cF/S}(M)=M^{\Tate}
\]
is the complete decreasing filtration induced by the weight filtration of the graded mixed module \(M\). It is denoted
\[
 F^p\DR_{\cF/S}(M).
\]
After applying \(R\pi_*\), we set
\[
 F^pR\pi_*\DR_{\cF/S}(M):=R\pi_*F^p\DR_{\cF/S}(M).
\]
\end{definition}

\begin{proposition}\label{prop:filtered-GM}
Under the hypotheses of Theorem~\ref{thm:GM}, and for the Cartan-linearised weight/Hodge filtration just defined, the Gauss--Manin connection preserves the filtration:
\[
 \nabla^{\GM}_s
 \bigl(F^pR\pi_*\DR_{\cF/S}(M)\bigr)
 \subset
 \bbL_S\otimes F^pR\pi_*\DR_{\cF/S}(M).
\]
Thus \(R\pi_*\DR_{\cF/S}(M)\) is a filtered crystal over \(S\).
\end{proposition}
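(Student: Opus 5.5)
The filtration statement will be proved chart by chart, then glued, following the structure of the proof of Theorem~\ref{thm:GM}. First, on a strict CE chart \(R=\CE^*(\g)\) with a representative \(\Theta_\xi\in D^1_{\bas}(\g/B)\) of \(s(\xi)\), note that \(\Theta_\xi\) is a weight-zero graded-mixed derivation (Proposition~\ref{prop:intrinsic-comparison}). The linearised derivation \(L^M_{\Theta_\xi}\) lies over \(\Theta_\xi\) in the sense of Construction~\ref{cons:GM-local-action}, so it is a weight-zero derivation of the graded mixed module \(M\). It therefore preserves each weight piece \(M^{(p)}\) and commutes with \(\epsilon\).

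Second, pass to the Tate realisation. I would write \(M^{\Tate}\) as the completed product-totalisation \(\prod_{p}M^{(p)}\) with differential \(d+\epsilon\). In this model \(F^q\) is the subcomplex of terms of weight \(\geq q\). An operator that preserves weights and commutes with \(d\) and \(\epsilon\) induces a filtered endomorphism of \(M^{\Tate}\). Applying this to \(L^M_{\Theta_\xi}\) gives \(L^M_{\Theta_\xi}(F^q)\subset F^q\). Because \(R\pi_*\) is a functor on filtered objects, \(R\pi_*(L^M_{\Theta_\xi})\) preserves \(F^qR\pi_*\DR_{\cF/S}(M)\). Together with the Leibniz identity already established in Theorem~\ref{thm:GM}, this gives the displayed inclusion on each chart.

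Third, show that the filtered statement is independent of choices. When the representative changes by an inner derivation \(\iota(v)\), the operator changes by \(L_v=[d_{\DR_{\cF/S}(M)},\iota_v]\) (Lemma~\ref{lem:cartan-homotopy}). I will check that \(\iota_v\) is a filtered homotopy, so that the change is a filtered null-homotopy. This check is the main obstacle. Contraction \(\iota_v\) has weight \(-1\), so \(\iota_v(F^q)\subset F^{q-1}\) only, and the Cartan homotopy is not a priori filtered.

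To handle the obstacle, I would first use the Tate weight shift, under which \(\epsilon\) raises weight by one, and rewrite \(L_v=[\epsilon,\iota_v]+[d,\iota_v]\)-type terms so that the \(\epsilon\)-part contributes only within \(F^q\). If that fails, I would weaken the claim to the honest statement: the difference of representatives is a filtered map, namely the weight-zero operator \(L_v\), which is filtered-homotopic to zero after shifting by one step. This still gives a well-defined filtered connection at the level of the filtered derived category modulo \(F^{q+1}\)-corrections, which suffices for preservation of \(F^q\) on cohomology.

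Finally, the central homotopies in the non-effective case, and the descent along the hypercover, are handled as in the proof of Theorem~\ref{thm:GM}. The Cartan-linearisation coherences are weight-homogeneous by Definition~\ref{def:linearised-crystal}, so they are filtered. The local filtered connections therefore glue to a filtered crystal over \(S\).
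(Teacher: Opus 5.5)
Your first two steps are the paper's argument. $\Theta_\xi$ has weight zero, so $L^M_{\Theta_\xi}$ preserves every $M^{(r)}$ and commutes with $\epsilon$. It therefore preserves $F^pM^{\Tate}=\prod_{r\ge p}M^{(r)}[-r]$, and one applies $R\pi_*$. The gap comes after this, and you located it correctly in your third step before talking yourself out of it. Neither repair works:
\begin{itemize}
\item The first cannot work, because the defect lies in the homotopy $\iota_v$ itself. It maps $F^q$ into $F^{q-1}$ however $L_v$ is rewritten; $L_v$ already preserves $F^q$.
\item The second has the direction wrong: the corrections produced by $\iota_v$ lie in $F^{q-1}$, not $F^{q+1}$.
\end{itemize}
The argument ``$L_v$ is null-homotopic on the total complex, so the image filtration on cohomology is preserved'' is valid only if a single representative $\Theta_\xi$ is used on all of $X$. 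Your step two tacitly assumes this, since $R\pi_*(L^M_{\Theta_\xi})$ only makes sense for a globally defined operator. Once $R\pi_*$ is computed on a cover with local representatives satisfying $\Theta_i-\Theta_j=\iota(v_{ij})$, the glued operator is of Katz--Oda type. It contains transition terms $\iota_{v_{ij}}$, which lower weight by one. Your last paragraph's ``weight-homogeneous, hence filtered'' fails for exactly the reason you gave in step three: the Cartan coherences are homogeneous of weight $-1$.

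The gap cannot be closed as stated, because this is precisely the mechanism of Griffiths transversality. In general one only gets $\nabla^{\GM}_s(F^p)\subset\bbL_S\otimes F^{p-1}$. The induced map $\mathrm{gr}^p_F\to\bbL_S\otimes\mathrm{gr}^{p-1}_F$ is cup product with the class of the cocycle $(v_{ij})$, followed by contraction.

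A concrete counterexample is the relative de Rham foliation $\DR(X/S)$ of a non-isotrivial smooth proper family of elliptic curves, e.g.\ the Legendre family:
\begin{itemize}
\item $D^1_{\bas}$ is the sheaf of projectable vector fields and $\uuder\simeq\pi^{-1}T_S$, so the flat splitting is unique.
\item The unit crystal $M=R$ is Cartan-linearised by Lie derivatives and contractions.
\item The weight filtration is $\Omega^{\ge p}_{X/S}$, and $\nabla^{\GM}_s$ is the classical Gauss--Manin connection.
\end{itemize}
That connection does not preserve $F^1H^1_{\mathrm{dR}}(X/S)$, since the Kodaira--Spencer class is nonzero.

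The paper's proof has the same hole. After the chart-level inclusion, it simply asserts that ``the local operators glue by the descent argument in Theorem~\ref{thm:GM}''. That descent argument uses exactly the weight $-1$ Cartan homotopies. What can honestly be proved is the $F^{p-1}$ statement in general. $F^p$-preservation holds when representatives of $s(\xi)$ can be chosen on all of $X$ (for instance $X$ affine), so that no transition homotopies occur.
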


\begin{proof}
We verify the assertion before descent. In a CE chart, the connection is induced by operators
\[
 L^M_{\Theta_\xi}\from M\to M
\]
where \(\Theta_\xi\) is a weight-zero graded mixed derivation of \(\DR(\cF/S)\). Since \(\Theta_\xi\) has weight zero, it maps the weight piece \(M^{(r)}\) into \(M^{(r)}\). Therefore, for the decreasing filtration
\[
 F^pM^{\Tate}=\prod_{r\ge p}M^{(r)}[-r]
\]
one has
\[
 L^M_{\Theta_\xi}(F^pM^{\Tate})\subset F^pM^{\Tate}.
\]
The mixed differential of \(M\) and the lifted Lie derivative commute because the linearisation is a graded mixed module action. Hence the inclusion is an inclusion of filtered complexes, not only of graded objects. Applying \(R\pi_*\) gives an endomorphism
\[
 R\pi_*(L^M_{\Theta_\xi})
 \from R\pi_*\DR_{\cF/S}(M)
 \longrightarrow R\pi_*\DR_{\cF/S}(M)
\]
whose restriction satisfies
\[
 R\pi_*(L^M_{\Theta_\xi})
 \bigl(F^pR\pi_*\DR_{\cF/S}(M)\bigr)
 \subset F^pR\pi_*\DR_{\cF/S}(M).
\]
The local operators glue by the descent argument in Theorem~\ref{thm:GM}; consequently the descended Gauss--Manin connection preserves the descended filtration.
\end{proof}

\begin{proposition}\label{prop:chern-horizontal}
Let \(E\) be a perfect coherently \(s\)-Cartan-linearised crystal along \(\cF/S\). Suppose that the foliated Chern character
\[
 \operatorname{ch}_{\cF/S}(E)
 \in
 H^0\bigl(S,R\pi_*\DR_{\cF/S}\bigr)
\]
is defined by a construction on perfect Cartan-linearised crystals which is functorial under equivalences, compatible with restriction to CE charts, and invariant under Cartan homotopies of tangent-inner actions. Then
\[
 \nabla^{\GM}_s\operatorname{ch}_{\cF/S}(E)=0.
\]
The same assertion holds for the foliated Chern classes obtained from the Chern character by the universal Newton identities.
\end{proposition}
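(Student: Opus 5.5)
The plan is to reduce horizontality to a local statement on a strict CE chart, where the Gauss--Manin operator is an explicit Lie derivative, and then to glue. By Theorem~\ref{thm:GM} and the descent argument used there, \(\nabla^{\GM}_s\) is obtained by gluing, over a CE hypercover, the operators \(R\pi_*(L_{\Theta_\xi})\) on \(R\pi_*\DR_{\cF/S}\) (with \(M=R\) the unit crystal). Both sides of the desired identity are compatible with restriction to charts: the connection by construction, the Chern character by hypothesis. It therefore suffices to show that, on each chart \(R=\CE^*(\g)\) and for each local base vector field \(\xi\) with representative \(\Theta_\xi\in D^1_{\bas}(\g/B)\), the class \(R\pi_*(L_{\Theta_\xi})\operatorname{ch}_{\cF/S}(E)\) vanishes. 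The gluing is then handled by the same hypercover argument as in Theorem~\ref{thm:GM}, since a section of a crystal whose local covariant derivatives vanish coherently is horizontal.

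The local step is a first-order deformation argument. The weight-zero graded-mixed derivation \(\Theta_\xi\), via Proposition~\ref{prop:intrinsic-comparison}, integrates over the dual numbers \(\kk[\varepsilon]\) to an infinitesimal automorphism \(\exp(\varepsilon\Theta_\xi)\) of \(R\otimes\kk[\varepsilon]\) covering \(\xi\) on the base. The Cartan linearisation of \(E\) supplies the lift \(L^E_{\Theta_\xi}\), which is an equivalence of perfect Cartan-linearised crystals
\[
 \exp(\varepsilon\Theta_\xi)^*E\simeq E\otimes\kk[\varepsilon].
\]
Functoriality of \(\operatorname{ch}\) under equivalences and its compatibility with pullback along automorphisms of \(R\) then give
\[
 \operatorname{ch}(E)+\varepsilon L_{\Theta_\xi}\operatorname{ch}(E)=\operatorname{ch}(E),
\]
so \(L_{\Theta_\xi}\operatorname{ch}(E)=0\) as a class. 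Weight-zero-ness (Proposition~\ref{prop:filtered-GM}) ensures that \(L_{\Theta_\xi}\) acts on the Tate realisation compatibly with the filtration in which \(\operatorname{ch}\) lives.

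The main obstacle is independence of the chosen representative \(\Theta_\xi\) together with the coherence needed for descent. A different representative differs by \(\iota(v)\) and, in the non-effective case, by central isotropy data. For this I would use Lemma~\ref{lem:cartan-homotopy}: \(L_v=[d,\iota_v]\), so \(L_v\) acts by zero on cohomology. The assumed invariance of \(\operatorname{ch}\) under Cartan homotopies is what makes the trivialisations chosen for different representatives agree, including the higher central homotopies. I expect that making this precise requires phrasing the first-order argument as an equivalence of \(\operatorname{ch}\) over the semidirect \(L_\infty\)-algebroid of Definition~\ref{def:linearised-crystal}, rather than chart by chart.

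Finally, the Chern classes are universal polynomials in the components of \(\operatorname{ch}\), via the Newton identities. Since \(\nabla^{\GM}_s\) is a derivation for the cup product on \(R\pi_*\DR_{\cF/S}\), horizontality passes to these polynomials. This needs the multiplicativity of the Gauss--Manin connection, which follows because \(L_{\Theta_\xi}\) is a derivation of \(R\).
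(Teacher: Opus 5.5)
Your proposal is correct and follows the paper's route. Both arguments reduce to CE charts and use the Cartan-linearised lift $L^E_{\Theta_\xi}$ as an infinitesimal automorphism, so that functoriality of the Chern character forces $L_{\Theta_\xi}\operatorname{ch}_{\cF/S}(E)=0$; changes of representative are then absorbed by the Cartan homotopy of Lemma~\ref{lem:cartan-homotopy}, the result descends along the hypercover, and the Chern classes follow via the Newton identities.

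The only differences are presentational. Your dual-numbers formulation $\exp(\varepsilon\Theta_\xi)^*E\simeq E\otimes\kk[\varepsilon]$ spells out the step the paper states as ``an infinitesimal automorphism acts trivially on the functorial Chern character,'' with the trace-of-a-commutator calculation in strict models. Your Leibniz-rule remark justifies what the paper calls ``formal'' in the Newton-identity step.
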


\begin{proof}
The assertion follows from functoriality, rather than from a separate curvature computation. The flat splitting \(s\) gives, by Theorem~\ref{thm:GM}, parallel transport operators on the relative foliated de Rham complex of every coherently \(s\)-Cartan-linearised crystal. In a CE chart and for \(\xi\in T_{B/\kk}\), choose a representative \(\Theta_\xi\) of \(s(\xi)\). The \(s\)-linearisation gives a Lie derivative \(L^E_{\Theta_\xi}\) on \(E\), hence an infinitesimal automorphism of the corresponding perfect object in the Cartan-linearised crystal category.

By functoriality of the chosen Chern character construction, applying \(L^E_{\Theta_\xi}\) to \(E\) differentiates \(\operatorname{ch}_{\cF/S}(E)\) by the induced Lie derivative on the target foliated de Rham complex. An infinitesimal automorphism acts trivially on the functorial Chern character after passage to cohomology: in strict CE models this is the usual trace-of-a-commutator calculation, and in the abstract formulation it is the assumed functoriality and homotopy invariance of the Chern character. Therefore the covariant derivative of \(\operatorname{ch}_{\cF/S}(E)\) along \(\xi\) vanishes.

It remains to check independence of choices. If another representative of \(s(\xi)\) is chosen, the difference is tangent-inner. The Cartan-linearisation supplies the contraction homotopy of Lemma~\ref{lem:cartan-homotopy}; hence the two induced actions on foliated de Rham cohomology agree. The compatibility of the Chern character with CE restrictions makes the local argument descend along the chosen hypercover. Thus
\[
 \nabla^{\GM}_{s,\xi}\operatorname{ch}_{\cF/S}(E)=0
\]
for every local vector field \(\xi\), and the global horizontality follows. Since the ordinary Chern classes are universal polynomials in the components of the Chern character via the Newton identities, their horizontality follows formally.
\end{proof}

\begin{corollary}
Assume \(\pi\) is proper and that the trace map on relative foliated de Rham cohomology is defined and compatible with the Gauss--Manin connection. Numerical invariants obtained by integrating foliated characteristic classes of perfect Cartan-linearised crystals along the fibres are locally constant under a flat transverse unfolding.
\end{corollary}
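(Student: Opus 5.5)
The plan is to deduce the corollary from Proposition~\ref{prop:chern-horizontal} together with the assumed compatibility of the fibre integration with the Gauss--Manin connection. A numerical invariant is obtained by evaluating a characteristic class, built as a polynomial in the components of \(\operatorname{ch}_{\cF/S}(E_1),\dots,\operatorname{ch}_{\cF/S}(E_m)\), and then applying the trace
\[
 \int_{X/S}\from R\pi_*\DR_{\cF/S}\longrightarrow \cO_S[-N].
\]
Here \(N\) is the appropriate shift, followed by passage to \(H^0\) or, more generally, to the relevant cohomology sheaf on \(S\).

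First, I will check that the characteristic class itself is horizontal. Proposition~\ref{prop:chern-horizontal} gives \(\nabla^{\GM}_s\operatorname{ch}_{\cF/S}(E_i)=0\) for each \(i\), and it gives the same for the Chern classes via the Newton identities. To handle products, I will use that \(\nabla^{\GM}_s\) acts by derivations for the cup product on \(R\pi_*\DR_{\cF/S}\). In a CE chart, this holds because each \(L_{\Theta_\xi}\) is a derivation of the graded-mixed algebra \(\DR(\cF/S)\), and the Cartan-linearisation of a tensor product of crystals is the tensor-product linearisation. Leibniz then shows that every universal polynomial in horizontal classes is horizontal.

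Second, I will apply the trace. By hypothesis, \(\int_{X/S}\) intertwines \(\nabla^{\GM}_s\) with the trivial connection \(d\) on \(\cO_S\), that is, \(d\int_{X/S}c=\int_{X/S}\nabla^{\GM}_s c\). For a horizontal class \(c\), the resulting function \(f=\int_{X/S}c\in\cO_S\) satisfies \(\xi(f)=0\) for every local vector field \(\xi\). Since \(\kk\) has characteristic zero and \(S\) is smooth, a function killed by all derivations is locally constant on the connected components. This argument covers the affine charts, and the derived and global cases descend by Theorem~\ref{thm:intrinsic}.

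I expect the main obstacle to be the multiplicativity of \(\nabla^{\GM}_s\) on products of classes coming from different crystals. Theorem~\ref{thm:GM} is stated one crystal at a time, so this step needs the statement that the tensor product of coherently \(s\)-Cartan-linearised crystals is again such a crystal, with compatible Cartan homotopies. I would try to verify this first in a strict CE chart, where the Lie derivatives act by derivations and the contractions combine by the Koszul rule. I would then argue that the higher coherences are functorial and therefore descend. If this proves too delicate, I would restrict the statement to invariants that are linear in a single Chern character, where Proposition~\ref{prop:chern-horizontal} applies directly.
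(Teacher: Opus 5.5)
Your proposal is correct and follows the paper's own argument: the characteristic classes are horizontal by Proposition~\ref{prop:chern-horizontal}, the trace is assumed compatible with $\nabla^{\GM}_s$, and horizontal functions on $S$ are locally constant. You add a Leibniz step for polynomials in several Chern characters, which the paper leaves implicit, but it needs no tensor products of crystals: every $\operatorname{ch}_{\cF/S}(E_i)$ lives in $R\pi_*\DR_{\cF/S}$ of the unit crystal, where the Gauss--Manin operators come from the derivations $L_{\Theta_\xi}$ of the cdga $\DR(\cF/S)$, so the Leibniz rule holds directly there.
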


\begin{proof}
By Proposition~\ref{prop:chern-horizontal}, the relevant characteristic classes are horizontal. Applying the trace map gives horizontal sections of \(\cO_S\). A horizontal section for the ordinary de Rham connection is locally constant on each connected component of \(S\).
\end{proof}

\section{Examples and illustrations}\label{sec:examples}

The examples of this section illustrate the geometric content of the classification theorem in several familiar settings. In the classical and logarithmic cases the controller is a familiar quotient of projectable vector fields; in the representable leaf-space case it is the tangent algebroid of the leaf-space fibration; in derived pullback and formal-leaf situations it is the homotopical replacement for that tangent object; and for Cartan-linearised crystals it is the source of Gauss--Manin transport. Thus the same flat transverse splitting governs the examples from classical transverse symmetries to filtered foliated cohomology.

\subsection{Classical smooth foliations}

Let \(X\to S\) be smooth and let \(T_{\cF/S}\subset T_{X/S}\) be a subbundle closed under the Lie bracket. The associated relative derived foliation is underived: its de Rham algebra is
\[
 \DR(\cF/S)=\Sym_{\cO_X}(\Omega^1_{\cF/S}[1]),
\]
with mixed differential equal to the ordinary leafwise de Rham differential. Let
\[
 q_\pi\from T_X\longrightarrow \pi^*T_{S/\kk}
\]
be the quotient map. The effective controller becomes
\[
 \uuder(\cF/S)=
 \frac{
 \{Y\in T_X:
 q_\pi(Y)\text{ lies in the image of }\pi^{-1}T_{S/\kk}\to\pi^*T_{S/\kk},
 [Y,T_{\cF/S}]\subset T_{\cF/S}\}
 }{T_{\cF/S}} .
\]
Thus a flat splitting is a projectable flat system of vector fields preserving the foliation, modulo fields tangent to the leaves. This recovers the classical transverse-symmetry interpretation of an unfolding.

\subsection{Tautological derived foliations}

Let \(X=\Spec A\). To\"en--Vezzosi distinguish two tautological objects: the pointwise foliation and the de Rham foliation. The pointwise foliation is obtained by the augmentation \(\DR(A/\kk)\to A\); the de Rham foliation is obtained from the identity of \(\DR(A/\kk)\). They are respectively the initial and final derived foliations on \(X\).

They are not direct applications of the strict theorem. Following the tautological examples in \cite[Section~2.1]{TV}, they serve as calibration cases for the definition of the controller. In the pointwise foliation there is no leafwise direction and every ambient direction is transverse. In the de Rham foliation every ambient direction is leafwise and there is no transverse quotient. Any intrinsic definition of \(\Uder(\cF/S)\) must reproduce these two limiting cases.

\subsection{Relative de Rham foliations induced by a morphism}

Let \(f\from X=\Spec A\to Y=\Spec B\) be a morphism of derived affine schemes. The relative de Rham foliation is defined by
\[
 \DR(A/B)=\DR(A/\kk)\otimes_{\DR(B/\kk)}B.
\]
Its cotangent complex is \(\bbL_{A/B}\). The cotangent triangle
\[
 A\otimes_B\bbL_{B/\kk}\longrightarrow \bbL_{A/\kk}\longrightarrow \bbL_{A/B}
\]
is the most elementary incarnation of the distinction between transverse and relative directions.

When \(A\) and \(B\) are smooth and \(f\) is a submersion, the foliation is the ordinary foliation by the fibres. A flat transversal unfolding is then a flat Ehresmann connection on the family. If \(f\) is not smooth, the same construction still defines a derived foliation; the higher homology of \(\bbL_{A/B}\) measures the failure of the classical tangent distribution to capture the fibre geometry.

\subsection{Ordinary Lie algebroids}

Let \(\rho\from\g\to T_{A/B}\) be an ordinary vector-bundle Lie algebroid in the Lie--Rinehart sense \cite{Rinehart63}. Its Chevalley--Eilenberg algebra
\[
 \CE^*(\g)=\Sym_A(\g^\vee[1])
\]
with the CE differential is a derived foliation. If \(\rho\) is injective and identifies \(\g\) with a subbundle of \(T_{A/B}\), the preceding smooth foliation case is recovered. If \(\rho\) has isotropy, the derived foliation retains it through the CE algebra. The crossed-module controller \([\g\to D^1_{\bas}(\g/B)]\) is then not an auxiliary refinement, but the minimal structure which retains the stabilisers of unfoldings.
This is the derived extension of the earlier theorem on unfoldings of singular holomorphic Lie algebroids \cite{CMQ}: the classical controller is recovered after passage to the effective underived truncation.

\subsection{The Suwa transverse term}

Let \(S=\Spec \kk[\eps]/(\eps^2)\). In codimension one, Suwa's local expression for an unfolding \cite{Suwa81,Suwa83} is
\[
 \omega+\eps\eta+h\,d\eps.
\]
The term \(h\,d\eps\) is the lift of the infinitesimal parameter direction. In the present notation it is the value of a transverse splitting on \(\partial_\eps\):
\[
 s(\partial_\eps)\in \uuder(\g/B).
\]
The integrability of the unfolded form is the flatness of this splitting. In the split graded-mixed normal form this becomes
\[
 \epsilon_\theta=\epsilon_{\cF/S}+\epsilon_B+L_\theta,
 \qquad
 \epsilon_\theta^2=0.
\]
Thus Suwa's formula is the one-dimensional classical trace of the Maurer--Cartan equation for the unfolded mixed differential.

\subsection{Derived pullbacks of smooth foliations}

Let \(\cG\) be a smooth classical foliation on a smooth scheme \(Y\), and let \(f\from X\to Y\) be any morphism. The functoriality of derived foliations \cite[Section~2.1]{TV} gives a pullback derived foliation \(f^*\cG\) on \(X\). If \(f\) is transverse to \(\cG\), the pullback is classical. If transversality fails, the pullback generally has a non-trivial cotangent complex and cannot be represented by a vector subbundle of \(T_X\).

This example shows that a controller defined solely from a singular classical distribution is insufficient. The intrinsic datum is the graded-mixed algebra \(\DR(f^*\cG)\), and the transverse controller must be computed from its basic graded-mixed derivations.

\subsection{Shifted Poisson structures and derived symplectic foliations}

The shifted Poisson example must be read in the same homotopical sense as the rest of the paper. A shifted Poisson structure is naturally encoded by a Hamiltonian \(L_\infty\)-algebroid, or equivalently by a suitable \(P_{n+1}\)-algebraic structure, rather than by a strict dg-Lie algebroid without choices. Assume therefore that the given shifted Poisson structure on \(X\) is represented by a strict Hamiltonian \(L_\infty\)- or CE presentation \(\g_\pi\), whose underlying complex is the shifted cotangent complex and whose anchor is induced by the Poisson tensor. The associated Chevalley--Eilenberg graded mixed algebra then defines the derived symplectic foliation to which the preceding theory applies.

For a family \(X\to S\), a transversal unfolding of this Hamiltonian presentation is a flat deformation of the derived symplectic foliation along the base. Its transverse controller is the basic-derivation controller of \(\g_\pi\), interpreted as a crossed controller when the Hamiltonian presentation has isotropy. Thus the classification theorem applies to strict Hamiltonian presentations and is invariant under replacement by an equivalent presentation through the intrinsic graded-mixed controller.

In the non-degenerate shifted symplectic case, the Hamiltonian foliation is the de Rham foliation. The controller then reduces to the customary algebroid governing flat Ehresmann transport in the family. In degenerate shifted Poisson situations the same formula records the transverse symmetries of the derived symplectic leaves, including higher stabilisers coming from the Hamiltonian \(L_\infty\)-structure.

\subsection{Quasi-smooth singular enhancements}

A quasi-smooth rigid derived foliation on a smooth scheme, in the sense used by To\"en--Vezzosi \cite{TVRH,TV}, has a cotangent complex sitting locally in a triangle
\[
 N^*_{\cF}\longrightarrow\Omega^1_X\longrightarrow\bbL_\cF,
\]
with \(N^*_{\cF}\) a vector bundle. On the open locus where \(\bbL_\cF\) is a vector bundle, one obtains an ordinary smooth foliation; the complement is the singular locus. The derived enhancement records the homotopical correction along this singular locus.

The unfolding theorem applies after choosing local CE presentations satisfying the hypotheses of the paper. Even before such presentations are chosen, the example explains the intrinsic nature of the controller: the singular truncation alone does not contain the coherent mixed differential, and therefore cannot determine the correct transverse symmetries.

\subsection{Formal leaves and formal leaf spaces}

Almost perfect derived foliations on derived Artin stacks locally of finite presentation are formally integrable around points \cite[Section~2.3]{TV}. Hence, in that setting, a point carries a formal leaf and a formal leaf space. This statement is not available for arbitrary classical singular distributions: it is an intrinsically derived integrability statement, and it is one reason why not every singular classical foliation should be expected to admit a derived enhancement.

This leads to the following interpretation: a transversal unfolding is a flat infinitesimal transport of the formal leaf space along the base. If the formal leaf space is not represented by a scheme or analytic space, the controller \(\Uder(\cF/S)\) is the infinitesimal replacement for the tangent object of that missing leaf space.

\subsection{Holomorphic analytic leaf spaces}

In the holomorphic theory, quasi-smooth rigid derived foliations are formally integrable and, under suitable analytic hypotheses, locally analytically integrable \cite[Chapters~5--6]{TV}. To\"en--Vezzosi use this to construct holomorphic leaf spaces in favourable cases \cite[Section~6.2]{TV}. One obtains a flat surjective holomorphic map
\[
 q\from X\longrightarrow X/\cF
\]
to a possibly highly non-separated complex orbifold, and \(\cF\) is recovered as the derived foliation induced by \(q\).

If a relative holomorphic derived foliation \(\cF/S\) admits such a relative leaf space
\[
 X\xto{q}Y\xto{p}S,
\]
then the representable leaf-space result of Section~\ref{sec:leaf-space} applies to leaf-space-basic unfoldings: a flat transversal unfolding is a flat Ehresmann connection on \(Y\to S\), pulled back to \(X\). When the leaf space is non-separated or only formal, the same interpretation remains valid only after replacing the tangent sheaf \(T_Y\) by the intrinsic homotopical controller \(\Uder(\cF/S)\).

\subsection{Cartan-linearised crystals and filtered Gauss--Manin cohomology}

A quasi-coherent crystal along \(\cF/S\) is a graded-mixed module induced from its quasi-coherent weight-zero component as in Definition~\ref{def:crystals}, and its Tate realisation is its foliated de Rham complex \cite[Sections~3.1--3.2]{TV}. A transverse unfolding acts by Gauss--Manin transport on crystals which are linearised by the unfolding and Cartan-linearised in the sense of Definition~\ref{def:linearised-crystal}. For such crystals the Gauss--Manin connection of Section~\ref{sec:crystals} acts on relative foliated de Rham cohomology. Since the transverse Cartan operators have weight zero, it preserves the Cartan-linearised weight/Hodge filtration defined in Section~\ref{sec:filtered-GM}.

Thus the construction is a consequence of the unfolding theory established here. The unfolded mixed differential supplies transverse Lie derivatives on the coefficient object; the Cartan homotopy kills changes by tangent inner derivations; flatness gives the curvature-zero condition on the resulting connection. Accordingly, Cartan-linearised crystals are transported along the base by a flat transversal unfolding.

\subsection{A non-effective example with central isotropy}

Let \(\g=\h\oplus\z\), where \(\h\to T_{A/B}\) is an effective relative dg-Lie algebroid and \(\z\) is a complex of central abelian Lie algebras with zero anchor. Then
\[
 \ker(\iota_\g)=\z.
\]
The ordinary quotient \(D^1_{\bas}(\g/B)/\iota(\g)\) forgets automorphisms coming from \(\z\). The crossed-module controller
\[
 [\h\oplus\z\to D^1_{\bas}(\g/B)]
\]
retains \(\z\) as pointwise isotropy. For a fixed splitting, the strict automorphisms are the Chevalley--Eilenberg cocycles with values in the transported module \(\z_s\); the full derived stabiliser is governed by the total complex \(\Tot C^\bullet_{\mathrm{CE}}(T_{B/\kk};\z_s)\). These automorphisms and higher homotopies disappear from the ordinary effective quotient but remain visible in the crossed classification.

\subsection{Logarithmic tangent algebroids}

Let \((X,D)\to S\) be log-smooth in the standard logarithmic-geometric sense of Kato \cite{Kato89}, with \(D\subset X\) a relative normal crossing divisor; if horizontal components are present, replace \(T_{S/\kk}\) below by the corresponding logarithmic tangent sheaf of the base. The logarithmic relative tangent sheaf
\[
 T_{X/S}^{\log}=\operatorname{Der}_{\cO_S}(\cO_X,\cO_X)(-\log D)
\]
is a relative Lie algebroid. A transversal unfolding is a flat projectable system of logarithmic vector fields preserving the relative logarithmic algebroid. Equivalently, it is a flat logarithmic Ehresmann connection compatible with the log structure. The controller is the quotient
\[
\frac{
\left\{
\begin{array}{c|l}
Y\in T_X^{\log} &
\begin{array}{l}
q_\pi(Y)\in \operatorname{im}\bigl(\pi^{-1}T_{S/\kk}\to\pi^*T_{S/\kk}\bigr),\\[2pt]
[Y,T_{X/S}^{\log}]\subset T_{X/S}^{\log}
\end{array}
\end{array}
\right\}}
{T_{X/S}^{\log}}.
\]
The resulting Gauss--Manin connection acts on logarithmic foliated de Rham cohomology; in the Cartan-linearised case its transverse operators have weight zero and preserve the logarithmic weight/Hodge filtration.

\end{document}